\theoremstyle{remark}
\DeclareMathAlphabet{\mathchanc}{OT1}{pzc}%
                                 {m}{it}
\newcommand{\mcH}{\mathchanc{H}}
\newcommand{\mcm}{\mathchanc{m}}
\newcommand{\mco}{\mathchanc{o}}
\newcommand{\bP}{\mathbb{P}}
\newcommand{\bQ}{\mathbb{Q}}
\newcommand{\bZ}{\mathbb{Z}}
\newcommand{\scr}{\mathcal}
\newcommand{\sD}{\scr{D}}
\newcommand{\sF}{\scr{F}}
\newcommand{\sH}{\scr{H}}
\newcommand{\sJ}{\scr{J}}
\newcommand{\sK}{\scr{K}}
\newcommand{\sL}{\scr{L}}
\newcommand{\sO}{\scr{O}}
\DeclareMathOperator{\trdeg}{{trdeg}}
\DeclareMathOperator{\Sym}{{Sym}}
\DeclareMathOperator{\Tr}{Tr}
\DeclareMathOperator{\codim}{codim}
\DeclareMathOperator{\Ker}{{Ker}}
\DeclareMathOperator{\Hom}{Hom}
\newcommand{\sHom}[0]{{\mcH\mco\mcm}}
\DeclareMathOperator{\id}{{id}}
\DeclareMathOperator{\im}{{im}}
\DeclareMathOperator{\rk}{{rk}}
\DeclareMathOperator{\Spec}{{Spec}}
\DeclareMathOperator{\Ca}{{Ca}}
\newcommand{\factor}[2]{\left. \raise 2pt\hbox{\ensuremath{#1}} \right/
        \hskip -2pt\raise -2pt\hbox{\ensuremath{#2}}}
\renewcommand\subsection{
  \renewcommand{\sfdefault}{pag}
  \@startsection{subsection}%
  {2}{0pt}{.8\baselineskip}{.4\baselineskip}{\raggedright
    \sffamily\itshape\small\bfseries
  }}
\renewcommand\section{
  \renewcommand{\sfdefault}{phv}
  \@startsection{section} %
  {1}{0pt}{\baselineskip}{.8\baselineskip}{\centering
    \sffamily
    \scshape
    \bfseries
}}
\title{On subadditivity of Kodaira dimension in positive characteristic}
\author{Zsolt Patakfalvi}
\begin{document}

\maketitle

\begin{abstract}
We show that for a  surjective, separable morphism $f$ of smooth projective varieties   over a field  of positive characteristic such that $f_* \sO_X \cong \sO_Y$ subadditivity of Kodaira dimension holds, provided the base is of general type  and the Hasse-Witt matrix of the geometric general fiber is not nilpotent. 
\end{abstract}

\tableofcontents

\section{Introduction}

Subadditivity of Kodaira dimension, also known as $C_{n,m}$ conjecture, is a conjecture of Iitaka \cite{Iitaka_Genus_and_classification_of_algebraic_varieties_I}, \cite[page 279]{Ueno_Classification_of_algebraic_varieties_I}  stating that for a fiber space $f : X \to Y$ with geometric generic fiber $F$ the respective Kodaira dimensions satisfy the following inequality:
\begin{equation}
\label{eq:subadditivity_of_Kodaira_dimension_initial}
\kappa(X) \geq  \kappa(F) + \kappa(Y). 
\end{equation}

Here we prove the above conjecture in positive characteristic when the base is of general type and the Hasse-Witt matrix of the geometric general fiber is not nilpotent:

\begin{theorem}
\label{thm:subadditivity_of_Kodaira_dimension}
Let  $f : X \to Y$ be a separable, surjective morphism of smooth projective varieties over an algebraically closed field $k$ of positive characteristic such that $f_* \sO_X \cong \sO_Y$. Further assume that $\kappa(Y)=\dim Y$, and the Hasse-Witt matrix of the geometric generic fiber $F$ is not nilpotent (including that it is not the zero matrix). 
Then 
\begin{equation}
\label{eq:subadditivity_of_Kodaira_dimension}
  \kappa \left(X\right) \geq \kappa \left(K_F\right) + \kappa \left(Y \right),
\end{equation}
and therefore
\begin{equation}
\label{eq:subadditivity_of_Kodaira_dimension_resolutions}
\kappa \left(X\right) \geq \kappa \left(F\right) + \kappa \left(Y \right). 
\end{equation}
\end{theorem}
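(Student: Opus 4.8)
\emph{Overall strategy.} The plan is to run Viehweg's argument for $C_{n,m}$ over a base of general type, replacing the (false in positive characteristic) semipositivity of $f_*\omega_{X/Y}^{\otimes m}$ with a weaker positivity coming from the relative Cartier operator, whose iterates do not vanish precisely because the Hasse--Witt matrix is not nilpotent. After the usual reductions --- using separability and $f_*\sO_X\cong\sO_Y$ one arranges that the geometric generic fibre $F$ is integral, and for any statement that is generic on the base one may pass to the open locus $Y_0\subseteq Y$ over which $f$ is smooth --- the point is the projection-formula identity
\begin{equation*}
H^0\bigl(X,\omega_X^{\otimes m}\bigr)=H^0\bigl(Y,\,(f_*\omega_{X/Y}^{\otimes m})\otimes\omega_Y^{\otimes m}\bigr)
\end{equation*}
together with the bigness of $\omega_Y$. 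Writing $\sE_m:=f_*\omega_{X/Y}^{\otimes m}$, it suffices to produce, for $m$ in a suitable arithmetic progression, a subsheaf $\sN_m\subseteq\sE_m$ which is weakly positive over $Y_0$ and whose rank grows like $m^{\kappa(K_F)}$; indeed, tensoring a weakly positive sheaf of rank $r$ with the big line bundle $\omega_Y^{\otimes m}$ and counting sections (by the standard hyperplane-section induction) gives $h^0(X,\omega_X^{\otimes m})\gtrsim r\cdot m^{\dim Y}\gtrsim m^{\dim Y+\kappa(K_F)}$, which is \eqref{eq:subadditivity_of_Kodaira_dimension}; and \eqref{eq:subadditivity_of_Kodaira_dimension_resolutions} follows from the routine inequality $\kappa(F)\le\kappa(K_F)$ via a resolution of $F$.

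\emph{The positivity input.} The core is to build $\sN_m$ and prove it positive. Let $F_{X/Y}\colon X\to X^{(p)}$ be the relative Frobenius (a finite morphism) and $\Tr\colon (F_{X/Y})_*\omega_{X/X^{(p)}}\to\sO_{X^{(p)}}$ its Grothendieck-duality trace. Since $\pi\circ F_{X/Y}=F_X$ for the projection $\pi\colon X^{(p)}\to X$, one has $\omega_{X^{(p)}/Y}\cong\pi^*\omega_{X/Y}$ and $\omega_{X/X^{(p)}}\cong\omega_{X/Y}^{\otimes(1-p)}$; twisting $\Tr$ by $\omega_{X^{(p)}/Y}^{\otimes m}$, using the projection formula, pushing forward along $f^{(p)}\colon X^{(p)}\to Y$ (so that $f^{(p)}\circ F_{X/Y}=f$), and applying flat base change along $F_Y$ yields a natural map
\begin{equation*}
\sE_{pm-(p-1)}\longrightarrow(F_Y)^*\sE_m .
\end{equation*}
Iterating $e$ times gives $\sE_{m_e}\to(F_Y^e)^*\sE_m$ with $m_e:=p^e(m-1)+1$, which over the generic point of $Y$ is the $e$-fold iterated Cartier operator acting on pluricanonical forms of $F$. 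Non-nilpotence of the Hasse--Witt matrix of $F$ says exactly that iterated Cartier on $H^0(F,\omega_F)$ does not die; using the identity $C^e(x^{p^e}y)=x\,C^e(y)$ one checks that the stable images $V_m:=\bigcap_e\im\bigl(H^0(F,\omega_F^{\otimes m_e})\to H^0(F,\omega_F^{\otimes m})\bigr)$ assemble into a nonzero graded subalgebra of the canonical ring $R(F,\omega_F)$, and that the associated ``Frobenius-stable'' subsheaf $\sN_m\subseteq\sE_m$, of rank $\dim_k V_m$, fits into injections $\sN_m\hookrightarrow(F_Y^e)^*\sN_m$ with torsion cokernel for every $e$.

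\emph{Weak positivity and conclusion.} Dualizing the generic isomorphism $\sN_m\hookrightarrow(F_Y^e)^*\sN_m$ gives $(F_Y^e)^*(\sN_m^\vee)\hookrightarrow\sN_m^\vee$ with torsion cokernel; restricting to a general complete-intersection curve $C\subseteq Y_0$ (noting $F_Y^e|_C=F_C^e$) produces an inclusion of vector bundles $(F_C^e)^*(\sN_m^\vee|_C)\hookrightarrow\sN_m^\vee|_C$. Since always $\mu_{\max}\bigl((F_C^e)^*W\bigr)\ge p^e\mu_{\max}(W)$ while a subbundle cannot raise $\mu_{\max}$, we get $p^e\mu_{\max}(\sN_m^\vee|_C)\le\mu_{\max}(\sN_m^\vee|_C)$ for all $e$, hence $\mu_{\max}(\sN_m^\vee|_C)\le 0$, i.e. $\sN_m|_C$ is nef for every such $C$; this is the form of weak positivity of $\sN_m$ over $Y_0$ needed for the section count of the first paragraph, yielding $h^0(X,\omega_X^{\otimes m})\ge h^0(Y,\sN_m\otimes\omega_Y^{\otimes m})\gtrsim(\dim_k V_m)\cdot m^{\dim Y}$.

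\emph{The main obstacle.} What I expect to be genuinely delicate --- and the heart of the matter --- is showing that $\dim_k V_m$ grows like $m^{\kappa(K_F)}$, equivalently that the Frobenius-stable subalgebra $\bigoplus_m V_m$ of $R(F,\omega_F)$ has the same dimension as $\Proj\bigl(R(F,\omega_F)\bigr)$: non-nilpotence of the Hasse--Witt matrix only supplies, a priori, one nonzero stable pluricanonical section, so passing to the stable part might a priori drop the Iitaka dimension. If no direct argument is available, this is exactly where Viehweg's fibre-product trick should be invoked, replacing $X$ by a desingularized $N$-fold fibre power $X^{(N)}\to Y$ to trade rank growth for positivity of determinants (the trace construction and the non-nilpotence hypothesis are compatible with fibre powers). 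A secondary technical point is to make the curve estimates uniform in $e$ --- controlling, via Sun-type bounds on Frobenius destabilization, that the torsion cokernels and destabilization defects do not accumulate --- and to carry along, from the start, the non-smoothness and possible non-normality of $X^{(p)}$ and of $F$, which is why one works over $Y_0$ and compares $K_F$ with $K_{F'}$ only at the end.
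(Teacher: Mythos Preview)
Your proposal has a genuine gap, and it is precisely the one you flag as ``the main obstacle'': you need $\dim_k V_m$ to grow like $m^{\kappa(K_F)}$, but non-nilpotence of the Hasse--Witt matrix gives you only a single nonzero stable canonical section, and neither a direct argument nor Viehweg's fibre-product trick is available here. The fibre-product trick requires desingularizing $X\times_Y\cdots\times_Y X$, which is unavailable in positive characteristic above dimension~$3$, and it is not clear the Hasse--Witt hypothesis survives the passage. The paper in fact explicitly remarks that it is \emph{unable} to deduce the $m>1$ case of weak positivity of $S^0 f_*\omega_{X/Y}^{\otimes m}$ (the cyclic-cover reduction fails for $S^0$), so the route you outline is not the one taken.

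The paper bypasses the growth problem entirely by proving a weaker statement and then upgrading it cheaply. The target is
\[
\inf\{\,s\in\bQ \mid \kappa(K_{X/Y}+s f^*L)\ge 0\,\}\le 0,
\]
and the key device is to work with Cartier modules of \emph{pairs}. If $g\in S^0(X_\eta,\omega_{X_\eta})$ is the one stable section guaranteed by the hypothesis and $h\in H^0(X_\eta,\omega_{X_\eta}^{m-1})$ is \emph{any} section with divisor $\Delta_\eta$, then $gh\in S^0\bigl(X_\eta,\sigma(X_\eta,\Delta_\eta)\otimes\omega_{X_\eta}^m\bigr)$, because the trace for the pair premultiplies by $h^{p^e-1}$ and $\Tr^e(g_e h^{p^e})=h\cdot\Tr^e(g_e)=hg$. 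Thus from $\Delta\in|qK_{X_{Y^n}/Y^n}+f_{Y^n}^*\,lH|$ one gets a nonzero Cartier module inside $f_{Y^n,*}\omega_{X_{Y^n}/Y^n}^{q+1}\otimes\sO(lH)$; its global generation after a fixed twist (independent of $q,l,n$) yields $|(q+1)K_{X_{Y^n}/Y^n}+f_{Y^n}^*(l+2)H|\neq 0$. Iterating and playing this against Frobenius base change drives the ratio of the $H$-coefficient to the $K_{X/Y}$-coefficient to zero, giving the displayed infimum. Finally --- and this is where the $\kappa(K_F)$ reappears for free --- an elementary convexity lemma shows
\[
\inf\{\,s\mid \kappa(K_{X/Y}+s f^*H)\ge 0\,\}=\inf\{\,s\mid \kappa(K_{X/Y}+s f^*H)\ge \kappa(K_F)+\dim Y\,\},
\]
since for $s\gg 0$ one always has $\kappa(K_{X/Y}+s f^*H)\ge\kappa(K_F)+\dim Y$ by generic global generation of $f_*\omega_{X/Y}^m\otimes H^t$. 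So no control on the rank of any Frobenius-stable subsheaf is ever needed; the paper uses only that it is nonzero.
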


The available proofs of the characteristic zero versions of \autoref{thm:subadditivity_of_Kodaira_dimension} use either Hodge theory or Kodaira vanishing. Since these are not available in positive characteristic, the main issue is to circumvent these tools by special positive characteristic methods. First a few remarks:

\begin{remark}
\label{rem:Kodaira_dimension}
For a variety $Z$, $\kappa(Z)$ denotes the Kodaira dimension as defined in \cite[Def 5.1]{Luo_Kodaira_dimension_of_algebraic_function_fields}, while $\kappa(\omega_Z)$ denotes the Kodaira-Iitaka dimension of the canonical bundle as defined in \autoref{defn:Kodaira_Iitaka}. 
With these definitions, the inequality \autoref{eq:subadditivity_of_Kodaira_dimension} implies \autoref{eq:subadditivity_of_Kodaira_dimension_resolutions} by \autoref{cor:Kodaira_vs_Kodaira-Iitaka_of_canonical}, hence the main content of \autoref{thm:subadditivity_of_Kodaira_dimension} is that \autoref{eq:subadditivity_of_Kodaira_dimension} holds. Note that we use \cite[Def 5.1]{Luo_Kodaira_dimension_of_algebraic_function_fields} as the definition of the Kodaira dimension, since in positive characteristic resolution of singularities is not known to exist in dimension greater than 3. However, this agrees with the usual definition whenever there is a resolution by point \autoref{itm:Kodaira_vs_Kodaira-Iitaka_of_canonical:smooth} of \autoref{prop:Kodaira_vs_Kodaira-Iitaka_of_canonical}.
\end{remark}

\begin{remark}
The Hasse-Witt matrix of a variety $Z$ over $k$ is the matrix of the action of the relative Frobenius on $H^n (Z, \sO_Z)$, where $n = \dim Z$. Since $k$ is perfect this action can be  identified with that of the absolute Frobenius. The Hasse-Witt matrix of the geometric generic fiber being non nilpotent is equivalent to the condition $S^0 \left(X_\eta, \omega_{X_\eta}\right) \neq 0$, where $\eta$ is the perfect closure of the generic point of $Y$ and $S^0\left(X_\eta, \omega_{X_\eta}\right)$ is the semistable submodule of $H^0\left(X_\eta, \omega_{X_\eta}\right)$ with respect to the dual action of the Frobenius (i.e., the stable image of the iterations of the Frobenius action, see \cite[Definition 4.1]{Schwede_A_canonical_linear_system}  for the general definition of $S^0$). In this sense the condition that the Hasse-Witt matrix is non-nilpotent requires that the Frobenius stable genus of the geometric general fiber is not zero. 

Furthermore by \autoref{thm:openness_S_0} the condition $S^0 \left(X_\eta, \omega_{X_\eta} \right) \neq 0$ is equivalent to saying that there is an open set $U \subseteq Y$ such that $S^0 \left(X_y, \omega_{X_y} \right) \neq 0$ for every perfect point $y \in U$ (a perfect point is a morphism $\Spec K \to U$, where $K$ is perfect).
\end{remark}

\begin{remark}
The non-nilpotence of the Hasse-Witt matrix is the expected behavior for general varieties with non-zero genus. This can be made sense in two different ways: First, if one reduces a smooth characteristic zero variety mod $p$, then the Hasse-Witt matrix is conjectured to be invertible for  infinitely many values of $p$ \cite{Mustata_Srinivas_Ordinary_varieties_and}. Second, if $X$ is a smooth variety over a field of  positive characteristic then the general member in the moduli space of $X$ is expected to have invertible Hasse-Witt matrix. Of course the latter is not a precise conjecture since $X$ does not always have a meaningful moduli space. However, it can be made precise in particular cases (e.g., $X$ is a high enough degree hyperplane section of a Gorenstein variety \cite[XXI, Th\'eor\`eme 1.4]{Seminaire_de_Geometrie_Algebrique_du_Bois_Marie_7_2_Groupes_de_monodromie_en_geometrie_algebrique}, $X$ is a curve of genus at least two or a complete intersection in $\bP^n$
\cite[Theorem 4, Theorem 5]{Koblitz_p_adic_variation_of_the_zeta_function_over_families_of_varieties_defined_over_finite_fields})
\end{remark}


\begin{remark}
It is easy to see that by the assumption $\dim Y = \kappa(Y)$ in fact \autoref{eq:subadditivity_of_Kodaira_dimension} is an equality. Same holds for \autoref{eq:subadditivity_of_Kodaira_dimension2} below.
\end{remark}

\begin{remark}
\emph{Earlier results:}
In characteristic zero subadditivity of Kodaira dimension has been a major driving force in the development of higher dimensional algebraic geometry  
\cite{Ueno_Classification_of_algebraic_varieties_I,Ueno_On_algebraic_fibre_spaces_of_abelian_varieties,Viehweg_Canonical_divisors_and_the_additivity_of_the_Kodaira_dimension_for_morphisms_of_relative_dimension_one,Kawamata_Characterization_of_abelian_varieties,Kawamata_The_Kodaira_dimension_of_certain_fiber_spaces_sketch,Kawamata_Kodaira_dimension_of_algebraic_fiber_spaces_over_curves,Kawmata_Hodge_theory_and_Kodaira_dimension,Kawamata_Kodaira_dimension_of_algebraic_fiber_spaces_over_curves,Kawamata_Minimal_models_and_the_Kodaira_dimension_of_agebraic_fiber_spaces,Viehweg_Weak_positivity,Viehweg_Weak_positivity_and_the_additivity_of_the_Kodaira_dimension_II_The_local_Torelli_map,Kollar_Subadditivity_of_the_Kodaira_dimension,Fujino_Algebraic_fiber_spaces_whose_general_fibers_are_of_maximal_Albanese_dimension,Birkar_The_Iitaka_conjecture_C_n_m_in_dimension_six,Lai_Varieties_fibered_by_good_minimal_models,Fujino_On_maximal_Albanese_dimensional_varieties,Chen_Hacon_Kodaira_dimension_of_irregular_varieties}.  
In particular, 
using the notations of \autoref{eq:subadditivity_of_Kodaira_dimension_initial}, the conjecture is proven if either $Y$ or $F$ are of general type or of maximal Albanese dimension. It is also shown if the general fiber of the Iitaka fibration of $F$ admits a good minimal model (i.e. including abundance). This latter also includes many low-dimensional known cases, except two: when $\dim Y = 1$ or $\dim X=6$.

In positive characteristic a special case was shown by the author in \cite[Corollary 4.6]{Patakfalvi_Semi_positivity_in_positive_characteristics} when both $Y$ and $F$ are of general type and further $K_{X/Y}$ is $f$-semi-ample. So, this special case requires the relative minimal model program in positive characteristic to imply a general subadditivity of Kodaira dimension type statement. Recently in \cite{Chen_Zhang_The_subadditivity_of_the_Kodaira-dimension_for_fibrations_of_relative_dimension_one_in_positive_characteristics} the  subadditivity conjecture was shown for fibrations of relative dimension one. 
\end{remark}

In fact, we are proving a slightly more general statement than that of Theorem \ref{thm:subadditivity_of_Kodaira_dimension}.

\begin{theorem}
\label{cor:subadditivity_of_Kodaira_dimension}
Let  $f : X \to Y$ be a separable, surjective morphism of projective varieties over a perfect field $k$ of positive characteristic such that $f_* \sO_X \cong \sO_Y$. Further assume that $Y$ is regular, $X$ is normal and Gorenstein,  $\kappa(Y)=\dim Y$ and 
 $S^0(X_\eta, \omega_{X_\eta}) \neq 0$, where $\eta$ is the perfect closure of the generic point of $Y$. Then 
\begin{equation}
\label{eq:subadditivity_of_Kodaira_dimension2}
  \kappa \left(K_X\right) \geq \kappa \left(K_{X_\eta}\right) + \kappa \left(K_Y \right) .
\end{equation}
\end{theorem}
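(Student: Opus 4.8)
Set $d=\dim Y$; since $\kappa(K_Y)=d$, the canonical bundle $\omega_Y$ is big. Because $X$ is Gorenstein and $Y$ regular, $\omega_{X/Y}:=\omega_X\otimes f^{*}\omega_Y^{-1}$ is invertible and $\omega_X^{\otimes m}\cong\omega_{X/Y}^{\otimes m}\otimes f^{*}\omega_Y^{\otimes m}$; by the projection formula and $f_{*}\sO_X\cong\sO_Y$ one has $H^0\big(X,\omega_X^{\otimes m}\big)\cong H^0\big(Y,(f_{*}\omega_{X/Y}^{\otimes m})\otimes\omega_Y^{\otimes m}\big)$. The plan is to produce, for $m$ in a suitable arithmetic progression, at least on the order of $m^{\kappa(K_{X_\eta})+d}$ linearly independent sections on the right, which gives $\kappa(K_X)\ge\kappa(K_{X_\eta})+d=\kappa(K_{X_\eta})+\kappa(K_Y)$.

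First, the fiber direction. I would upgrade the hypothesis $S^0(X_\eta,\omega_{X_\eta})\ne 0$ to a growth statement for $\dim S^0(X_\eta,\omega_{X_\eta}^{\otimes m})$. Fixing $0\ne t\in S^0(X_\eta,\omega_{X_\eta})$ and using that the Frobenius trace maps defining $S^0$ are compatible with twisting by a fixed line bundle (projection formula), one checks $t\cdot H^0(X_\eta,\omega_{X_\eta}^{\otimes(m-1)})\subseteq S^0(X_\eta,\omega_{X_\eta}^{\otimes m})$; since $X_\eta$ is integral (here $f$ separable enters) and $t$ is a non-zerodivisor, this subspace has dimension $h^0(X_\eta,\omega_{X_\eta}^{\otimes(m-1)})$. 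As $\kappa(K_{X_\eta})\ge 0$ (the section $t$ exhibits an effective canonical divisor), $\dim S^0(X_\eta,\omega_{X_\eta}^{\otimes m})\ge h^0(X_\eta,\omega_{X_\eta}^{\otimes(m-1)})$, which along a suitable progression is of order $m^{\kappa(K_{X_\eta})}$.

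Second, and this is the heart, the base direction. Let $\sS_m\subseteq f_{*}\omega_{X/Y}^{\otimes m}$ be the stable image of the iterated relative Frobenius trace maps $f_{*}\big(\omega_{X/Y}\otimes(F^e_{X/Y})^{*}\omega_{X/Y}^{\otimes(m-1)}\big)\to f_{*}\omega_{X/Y}^{\otimes m}$, a relative version of $S^0$. Using flat base change for the Frobenius of the regular base $Y$ and the compatibility of this construction with restriction to the perfect generic point $\eta$ (the mechanism behind \autoref{thm:openness_S_0}), $\sS_m$ is torsion-free of generic rank $r_m:=\dim S^0(X_\eta,\omega_{X_\eta}^{\otimes m})$; in particular $\sS_m\ne 0$ by hypothesis, which is exactly where non-nilpotence of the Hasse-Witt matrix is used. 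The crucial point is that $\sS_m$ carries a positivity substituting for the weak positivity of $f_{*}\omega_{X/Y}^{\otimes m}$ that Hodge theory or Kodaira vanishing would give in characteristic zero: one must show $\sS_m$ is weakly positive in the Frobenius-stable sense. Geometrically, the $r_m$ Frobenius-stable sections of $\omega_{X/Y}^{\otimes m}$ over the generic fiber, defined a priori only over some open $U\subseteq Y$, should extend over all of $X$ at the cost of a twist by $f^{*}A$ with $A$ a fixed ample bundle: because these sections lie in the image of arbitrarily high Frobenius iterates, the ample twist needed to absorb their poles along $f^{-1}(Y\setminus U)$ stays bounded independently of the iteration count --- an effective global-generation phenomenon particular to positive characteristic.

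Finally, combining the two directions. By the positivity just described, a suitable twist of $\sS_m$ by $\omega_Y$ --- or, to handle cleanly the factor $\omega_Y^{\otimes m}$ that grows with $m$, of the symmetric powers of a single well-chosen $\sS_\ell$ --- becomes big as a sheaf, and its rank is at least of order $m^{\kappa(K_{X_\eta})}$ by the fiber direction; so Viehweg's classical machinery (which uses only Noetherian global-generation statements, no vanishing) gives $h^0\big(Y,(f_{*}\omega_{X/Y}^{\otimes m})\otimes\omega_Y^{\otimes m}\big)$, hence $h^0(X,\omega_X^{\otimes m})$, of order $m^{\kappa(K_{X_\eta})+d}$ along the progression, which is the claim. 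I expect the main obstacle to be the positivity assertion of the third paragraph: establishing Frobenius-stable weak positivity of $\sS_m$ globally on $Y$ with no vanishing theorem available, which demands a careful analysis of the relative Frobenius and its Grothendieck trace, of flat base change for Frobenius over the regular base, of the behavior of $S^0$ under restriction to the perfect generic point, and of the effective bounds ensuring that the ample twist killing the poles over $f^{-1}(Y\setminus U)$ does not grow with the number of Frobenius iterations.
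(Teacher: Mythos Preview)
Your proposal correctly identifies the two directions and the multiplication trick $t\cdot H^0(X_\eta,\omega_{X_\eta}^{m-1})\subseteq S^0(X_\eta,\omega_{X_\eta}^m)$ is exactly the mechanism the paper uses in \autoref{prop:non_vanishing_on_the_general_fiber}. But the positivity step in your third paragraph is a genuine gap, and it is precisely the obstacle the paper has to work around rather than through.

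The issue is that $f_*\omega_{X/Y}^m$ (or your $\sS_m$) is \emph{not} a Cartier module on $Y$ for $m>1$: the Frobenius trace gives $F_*\omega_X\to\omega_X$, and twisting by $\omega_{X/Y}^{m-1}$ produces a map whose source involves $\omega_{X/Y}^{p(m-1)+1}$, not $\omega_{X/Y}^m$. So the Keeler--Schwede global generation statement (\autoref{thm:Cartier_module_globally_generated}), which is the only tool available here to bound the ample twist independently of the Frobenius iterate, does not apply. The paper is explicit that it can prove weak positivity only for $m=1$ (and even then only after base change to $Y^n$ for $n\gg 0$, see \autoref{thm:weak_positivity}); the cyclic-cover trick that bridges $m=1$ to $m>1$ in characteristic zero interacts badly with $S^0$ and is unavailable. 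Your sentence ``the ample twist needed to absorb their poles \dots\ stays bounded independently of the iteration count'' is exactly what cannot be justified for $m>1$.

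The paper's route is different. Instead of proving positivity of $\sS_m$, it fixes an auxiliary effective $\Delta\in |qK_{X_{Y^n}/Y^n}+f_{Y^n}^*lH|$ and observes that $f_{Y^n,*}$ of the twisted canonical bundle of the \emph{pair} $(X_{Y^n},\Delta)$ \emph{is} a Cartier module (\autoref{lem:we_indeed_define_a_Cartier_module}). Global generation for this module, plus your multiplication trick to guarantee non-vanishing, yields a section of $(q+1)K_{X_{Y^n}/Y^n}+f_{Y^n}^*(l+2)H$ (\autoref{prop:effective_induction_step}); iterating and passing between $X$ and $X_{Y^n}$ drives the ratio $l/q$ to zero. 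This is packaged as \autoref{thm:effective_general_fiber_implies_inf_is_zero}: $\inf\{s\in\bQ:\kappa(K_{X/Y}+sf^*L)\geq 0\}\leq 0$. The deduction of subadditivity from this is then a three-line argument: write $K_Y\sim_{\bQ}\varepsilon L+E$ with $E\geq 0$ by bigness, use \autoref{lem:from_effective_to_big} to upgrade $\kappa\geq 0$ to $\kappa\geq\kappa(K_{X_\eta})+\dim Y$ at the cost of an arbitrarily small $\varepsilon f^*L$, and absorb that $\varepsilon f^*L$ into $f^*K_Y$. So the paper never establishes weak positivity of $\sS_m$ and never invokes Viehweg's machinery; it replaces both by the iterative Cartier-module argument with moving boundary $\Delta$.
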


\begin{remark}
So, \autoref{cor:subadditivity_of_Kodaira_dimension} implies \autoref{thm:subadditivity_of_Kodaira_dimension} indeed, because if $k$ is algebraically closed and $X$ and $Y$ are smooth, then $\kappa(X)=\kappa(K_X)$ and $\kappa(Y)= \kappa(K_Y)$ as explained in \autoref{rem:Kodaira_dimension}. Further, by \autoref{cor:Kodaira_Iitaka_dimension_base_extension}), the Kodaira-Iitaka dimension of the canonical bundles of the geometric generic, perfect generic and generic fibers are the same. 
\end{remark}

\autoref{thm:subadditivity_of_Kodaira_dimension} and  \autoref{cor:subadditivity_of_Kodaira_dimension} are very much reminiscent of the characteristic zero statement \cite[Theorem 3]{Kawamata_Characterization_of_abelian_varieties}. The reason is that both rely on certain semi-posivity of  sheaves of the type $f_* \omega_{X/Y}^m$. In characteristic zero this was  made explicit in \cite{Viehweg_Weak_positivity}, where it was shown that $f_* \omega_{X/Y}^m$ is weakly-positive for every integer $m >0$ and surjective morphism of smooth varieties $X \to Y$. Unfortunately, there is no chance for this to hold in positive characteristic, because there are examples of families for which $f_* \omega_{X/Y}$ is not semi-positive \cite[3.2]{Moret_Bailly_Familles_de_courbes_et_de_varietes_abeliennes_sur_P_1_II_exemples}. 
On the other hand there is a subsheaf of $S^0 f_* \omega_{X/Y} \subseteq f_* \omega_{X/Y}$ \cite[Definition 2.14]{Hacon_Xu_On_the_three_dimensional_minimal_model_program_in_positive_characteristic} which  in certain sense captures the Hodge theoretically nicely behaving sections \cite[Definition 4.1, Prop 5.3]{Schwede_A_canonical_linear_system}. Unfortunately, this subsheaf can be accidentally too big, i.e., it captures not only the sections that are nice on every fiber, but also some that are accidentally nice globally over an affine open set. So, in particular, we can only prove weak-positivity of $S^0 f_{Y^n,*} \omega_{X_{Y^n}/Y^n}$ for some $n \gg 0$ in \autoref{prop:weak_positivity}, where $Y^n$ is the source of the $n$-th iterated absolute Frobenius of $Y$. Unfortunately, this is still not enough for our purposes. We would need the same statement  for $\omega_{X_{Y^n}/Y^n}$ replaced by $\omega_{X_{Y^n}/Y^n}^m$. In characteristic zero, this follows from the $m=1$ case by a cyclic covering trick. 
Unfortunately cylic covers behave very differently with respect to $S^0$ than to $H^0$. This is the main reason why we are actually unable to deduce the $m>1$ case. 
So, instead of deducing the $m>1$ case of the above weak-positivity statement we follow another path, which can be thought of as proving a weaker than weak-positivity. In fact, even proving weak-positivity of $S^0 f_{Y^n,*} \omega_{X_{Y^n}/Y^n}$ in \autoref{prop:weak_positivity} is not necessary for our argument. We include it only because the proof is short and we think it is an interesting statement in itself. Instead we prove the following theorem, from which \autoref{cor:subadditivity_of_Kodaira_dimension} follows by a short argument.

\begin{theorem}
\label{thm:effective_general_fiber_implies_inf_is_zero}
Let  $f : X \to Y$ be a separable, surjective morphism of projective varieties over a perfect field $k$ of positive characteristic such that $f_* \sO_X \cong \sO_Y$. Further assume that $X$ is normal, Gorenstein, $Y$ is  regular
and $S^0(X_\eta, \omega_{X_\eta}) \neq 0$, where $\eta$ is the perfect closure of the generic point of $Y$. Fix also an ample Cartier divisor $L$ on $Y$. Then
\begin{equation*}
\inf \{ s \in \bQ | \kappa(K_{X/Y} + s f^* L) \geq 0  \} \leq 0. 
\end{equation*}
\end{theorem}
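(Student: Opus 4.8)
The plan is to produce, for every integer $e_1\ge 1$, a nonzero section of some positive multiple of $K_{X/Y}+\tfrac1{p^{e_1}}f^*L$. Since $L$ is ample, $\kappa(K_{X/Y}+sf^*L)$ is non-decreasing in $s$ (adding a positive multiple of the pullback of an ample divisor can only enlarge the space of pluricanonical-type sections, after passing to multiples), and $\tfrac1{p^{e_1}}\to 0$; hence once this is done, $\kappa(K_{X/Y}+sf^*L)\ge 0$ for \emph{all} $s>0$, so the infimum over $\bQ$ is $\le 0$.

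The first step is to spread out the non-vanishing of $S^0$. By hypothesis the stable image of the iterated Frobenius traces on $H^0(X_\eta,\omega_{X_\eta})$ — which is $S^0(X_\eta,\omega_{X_\eta})$ — is nonzero; fix $0\ne\tau$ in it. After replacing $Y$ by a suitable Frobenius twist (which only rescales the infimum and so does not affect the statement) one may assume $\tau$ is defined over $K(Y)$, i.e.\ $0\ne\tau\in H^0(X_{K(Y)},\omega_{X_{K(Y)}})$ lies in the stable image of the relative Frobenius traces $\Tr^e_{X_{K(Y)}/K(Y)}$; in particular $\Tr^e_{X_{K(Y)}/K(Y)}(\tau)\ne 0$ for every $e$, as these maps are bijective on the stable submodule. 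Viewing $\tau$ as a rational section of the line bundle $\omega_{X/Y}$ on $X$, its polar divisor is supported over a proper closed subset of $Y$, because $\tau$ is a genuine section on the generic fibre. Since $L$ is ample, for every sufficiently large $e$ there is an effective divisor $W\sim p^{e-e_1}L$ on $Y$ with $f^*W$ dominating the polar divisor of $\tau$; hence for $e\gg e_1$ we obtain a nonzero section
\[
\bar\tau\in H^0\bigl(X,\ \omega_{X/Y}\otimes f^*\sO_Y(p^{e-e_1}L)\bigr),\qquad \bar\tau|_{X_{K(Y)}}=\tau.
\]

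Now fix such an $e$ and use the relative Frobenius to "undo the divisibility by $p^e$". Let $X_e:=X\times_{Y,F^e_Y}Y$, with $\pi_e:X_e\to X$ (the base change of $F^e_Y$, finite and purely inseparable) and $f_e:X_e\to Y$, and let $F^e_{X/Y}:X\to X_e$ be the $e$-th relative Frobenius, so $\pi_e\circ F^e_{X/Y}=F^e_X$ and $f_e\circ F^e_{X/Y}=f$. As $X$ is normal Gorenstein and $Y$ regular, $X_e$ is again Gorenstein, $\pi_e^*\omega_{X/Y}\cong\omega_{X_e/Y}$ and $\omega_{X/X_e}\cong\omega_{X/Y}^{\otimes(1-p^e)}$, so Grothendieck duality for the finite morphism $F^e_{X/Y}$ yields a trace map
\[
\Tr^e:\ (F^e_{X/Y})_*\bigl(\omega_{X/Y}\otimes f^*\sO_Y(p^{e-e_1}L)\bigr)\ \longrightarrow\ \omega_{X_e/Y}\otimes f_e^*\sO_Y(p^{e-e_1}L),
\]
whose restriction over the generic point of $Y$ is $\Tr^e_{X_{K(Y)}/K(Y)}$. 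Applying $\Tr^e$ to $\bar\tau$ (which lies in the source on global sections, $F^e_{X/Y}$ being finite) gives a section of $\omega_{X_e/Y}\otimes f_e^*\sO_Y(p^{e-e_1}L)$ that restricts on the generic fibre to $\Tr^e_{X_{K(Y)}/K(Y)}(\tau)\ne 0$; as the generic point of $X_e$ lies in that fibre, $\Tr^e(\bar\tau)\ne 0$. Pulling back along the finite dominant $F^e_{X/Y}$ and using the three displayed isomorphisms,
\[
(F^e_{X/Y})^*\bigl(\omega_{X_e/Y}\otimes f_e^*\sO_Y(p^{e-e_1}L)\bigr)\ \cong\ \omega_{X/Y}^{\otimes p^e}\otimes f^*\sO_Y(p^{e-e_1}L)\ \cong\ \sO_X\!\Bigl(p^e\bigl(K_{X/Y}+\tfrac1{p^{e_1}}f^*L\bigr)\Bigr)
\]
acquires a nonzero global section (its value at the generic point of $X$ is nonzero, so it is nonzero since $X$ is a variety). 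Hence $\kappa\bigl(K_{X/Y}+\tfrac1{p^{e_1}}f^*L\bigr)\ge 0$, and letting $e_1\to\infty$ finishes the proof.

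The main obstacle is the bookkeeping for $S^0$ and the Frobenius trace at the generic point: one must (i) translate the hypothesis $S^0(X_\eta,\omega_{X_\eta})\ne 0$, phrased over the perfect closure, into the existence of a section $\tau$ over $K(Y)$ that lies in the stable image of the \emph{relative} Frobenius traces — this uses the base-change properties of $S^0$ established earlier — and (ii) construct the relative Frobenius trace $\Tr^e$ and check it restricts correctly to the generic fibre and is injective on the stable submodule, using only that $X$ is normal Gorenstein and $Y$ regular with $f$ merely separable and surjective (this is where separability of $f$ enters, guaranteeing that $X_e$ is generically reduced and that restriction of $\tau$ behaves well). The geometric content — that a single $S^0$-section on the generic fibre suffices — is thus localized at that fibre, and the rest is the ``spread out, then apply the relative Frobenius trace'' mechanism, which here plays the role of the characteristic-zero weak-positivity-plus-cyclic-cover argument.
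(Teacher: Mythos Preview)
Your argument is correct and takes a genuinely different, more direct route than the paper's.

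The paper proceeds via Cartier-module machinery: it builds Cartier modules $M_\Delta = S^0 f_{Y^n,*}(\sigma(X_{Y^n},\Delta)\otimes\cdots)$, shows they are nonzero using the relative $S^0$-theory of Section~3, and then invokes the global-generation theorem for Cartier modules (Theorem~2.4) to produce sections. This feeds an \emph{iteration} (Proposition~5.5): starting from $\Gamma\in|bK_{X_{Y^n}/Y^n}+f_{Y^n}^*(ap^n)H|$, each step raises the power of $K$ by one while the ample twist grows only by $2$. One then descends to $X$ via Lemma~4.3 and sends $n,r\to\infty$ along a carefully chosen sequence. Your approach bypasses all of this: you exploit directly that over the perfect field $\eta$ the trace $\Tr$ is \emph{bijective} on the Frobenius-stable subspace $S^0(X_\eta,\omega_{X_\eta})$ (Fitting decomposition for $p^{-1}$-linear endomorphisms), so a single spread-out section $\bar\tau$ survives \emph{arbitrarily many} relative Frobenius traces, and one pullback along $F^e_{X/Y}$ multiplies the $K_{X/Y}$-exponent by $p^e$ while leaving the $f^*L$-twist fixed. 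This is more elementary --- no Cartier-module global generation, no iteration --- and isolates the geometric content at the generic fibre. The paper's route, by contrast, plugs into a broader structural framework (weak positivity of $S^0 f_*\omega_{X/Y}$, connections to $\mathcal{D}$-modules) and is set up to handle boundaries $\Delta$ and higher powers $\omega_{X/Y}^m$ more uniformly.

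Two points in your write-up deserve tightening. First, ``replacing $Y$ by a suitable Frobenius twist\ldots only rescales the infimum'' is not literally right: after the replacement you are proving the theorem for $f_{Y^m}:X_{Y^m}\to Y^m$, and you must descend. This is easy --- for any $s'>0$, setting $s=s'p^m$ gives $K_{X_{Y^m}/Y^m}+s\,f_{Y^m}^*L=\pi_m^*(K_{X/Y}+s'f^*L)$, and a nonzero section upstairs norms down to a nonzero section of a multiple downstairs since $\pi_m$ is finite flat (this is essentially Lemma~4.3) --- but it should be said. Second, the descent of $\tau\in S^0(X_\eta,\omega_{X_\eta})$ to $K(Y)$ after such a twist uses only that $S^0$ is a finite-dimensional $K(\eta)$-subspace of $H^0(X_\eta,\omega_{X_\eta})=\varinjlim_m H^0(X_{\xi^{(m)}},\omega)$, so any nonzero element lives at some finite level; you do not need the full uniform base-change theory of Section~3, only the (standard) compatibility of the relative Frobenius trace with field extensions.
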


Note that the set $\{ s \in \bQ | \kappa(K_{X/Y} + s f^* L) \geq 0  \}$ is an interval of type $[a,\infty) \cap \bQ$ or $(a, \infty) \cap \bQ$, because $L$ is ample. 

The proof of \autoref{thm:effective_general_fiber_implies_inf_is_zero} follows the ideas of the proof of \autoref{prop:weak_positivity}. It is centered around objects called Cartier modules \cite[Lemma 13.1]{Gabber_Notes_on_some_t_structures}. These are connected to $\sD$-modules in positive characteristic  \cite{Lyubeznik_F_modules_applications_to_local_cohomology_and_D_modules_in_characteristic_p_greater_than_0,Blickle_The_D_module_structure_of_R_F_modules}. More precisely special Cartier modules, the unit Cartier modules \cite[Definition 2.2]{Blickle_The_D_module_structure_of_R_F_modules}, yield $\sD$-modules. On the other hand the Cartier modules  used in the present article are not unit Cartier modules but are still closely related to  $\sD$-modules (in fact pushforward $\sD$-modules).  This can serve as an intuitive explanation for their appearance, since most of the characteristic zero results in the topic use similar $\sD$-modules originating from Variations of Hodge structures.

To prove \autoref{thm:effective_general_fiber_implies_inf_is_zero} we start with some big enough $s$ such that $\kappa(K_{X/Y} + s f^*L) \geq0$ (see \autoref{lem:generic_Kodaira_dimension_behavior_by_an_ample_twist_from_base}) and we would like to show that we can reduce $s$. That is, we want to  exhibit a sequence $s_m$ of rational numbers such that  $\kappa(K_{X/Y} + s_m f^*L) \geq0$ and $\lim_m s_m = 0$. Without giving every detail here, since the argument is not long (see \autoref{sec:argument}), we find such sequence by using Cartier modules on the source $Y^n$ of the $n$-th iterated absolute Frobenius of $Y$.  The important fact about Cartier modules used at this point  is that they posses similar global generation properties as the famous global generation statement of Mumford through Castelnuovo-Mumford regularity \cite[Theorem 1.8.5]{Lazarsfeld_Positivity_in_algebraic_geometry_I}. Since our Cartier modules are going to be subsheaves of $f_{Y^n,*} \omega_{X_{Y^n}/Y^n}^m \otimes G$ for some line 
bundle $G$, this will 
yield  global generation of a subsheaf of $f_{Y^n,*} \omega_{X_{Y^n}/Y^n}^m \otimes G$.  Here $G$ is an ample enough line bundle on $Y$, where ``ample enough'' depends only on $Y$. So, after proving that the corresponding Cartier modules are not-zero, we obtain non-zero sections of line bundles of the form $\omega_{X_{Y^n}/Y^n}^m \otimes f_{Y^n}^* G$. Then we can move these sections back to $X$, i.e., to sections of $\omega_{X/Y}^m \otimes f^* G'$, where $G= F_Y^{n,*} G''$ and $G'$ is a slightly more ample line bundle than $G''$. By carefully arranging the argument $G'$ will be isomorphic to $L^{c_m}$ for some integer $c_m >0$ and further $s_m=\frac{c_m}{m}$ will converge to zero.  We need the assumption $S^0(X_\eta, \omega_{X_\eta})\neq 0$ to prove that the appearing Cartier modules are not zero.

\subsection{Organization}

In \autoref{sec:Cartier_modules} we introduce Cartier modules, and show the above mentioned global generation statement in \autoref{thm:Cartier_module_globally_generated}. We have to note that we do not claim any credit for this theorem (see the explanation before it). In \autoref{sec:relative_Cartier_modules} we show how  the previously mentioned subspace $S^0$ of Frobenius stable sections behaves in families. An important consequence is that if it is not zero on the generic fiber then certain Cartier modules are not-zero, which will be used later (in \autoref{sec:argument}) as we have already explained. In  \autoref{sec:auxilliary_lemmas} we prove some easier auxilliary lemmas used later in the main argument, some of which might be known for experts. \autoref{sec:argument} contains the main argument. We give the proofs of \autoref{thm:effective_general_fiber_implies_inf_is_zero} and \autoref{cor:subadditivity_of_Kodaira_dimension} there. \autoref{sec:weak_positivity} is the proof of the weak-positivity of 
$S^0 f_{Y^{n},*} \omega_{X_{Y^{n}}/Y^{n}}$ for every $n \gg 0$, which is independent of the rest of the article and is included because of philosophical reasons as explained above. In the appendices we include a  statement that is well-known to experts, but for which we have not found adequate reference at this point.

\subsection{Notation}
\label{sec:notation}

 We fix a perfect base-field $k$ of characteristic $p>0$.
A variety is an integral, separated scheme of finite type over a field (not necessarily $k$ and not necessarily perfect). On the definitions of the Kodaira-Iitaka dimension of a line bundle and the Kodaira dimension of a variety, we refer to \autoref{sec:Kodaira_Iitaka_dimension} and \autoref{appendix:Kodaira_dimension_normalization}. In fact, the notion of Kodaira dimension is not used at all in the article, apart from the already explained implication of \autoref{thm:subadditivity_of_Kodaira_dimension} by \autoref{cor:subadditivity_of_Kodaira_dimension}. Hence, \autoref{sec:Kodaira_Iitaka_dimension} is more important for the argument of the paper, where the definition of Kodaira-Iitaka dimension can be found in \autoref{defn:Kodaira_Iitaka}.  A \emph{fibration} is a surjective morphism  $f: X \to Y$ between  projective varieties  over $k$ such that $f$ is separable and $f_* \sO_X \cong \sO_Y$. Note that by \cite[Theorem 7.1]{Badescu_Algebraic_surfaces} the generic fiber is geometrically integral. We do not 
assume $X$ to be normal, because we want the notion of fibration to be stable under pulling back by the absolute Frobenius morphism of the base. 
By abuse of notation we denote line bundles and any corresponding Cartier divisors by the same letter. We hope that from the context it is always clear which one we mean. For a scheme $X$ of positive characteristic $F_X : X \to X$ (or just simply $F$) denotes the absolute Frobenius morphism. Sometimes the source of $F_X^n$  is denoted by $X^n$, and in these situations if $X$ had a $k$ structure $\nu : X \to \Spec k$, the $k$-structure on $X^n$ will be given by $F_{\Spec k}^n \circ \nu$. Then  $F_X^n$ as a morphism $X^n \to X$ becomes a $k$-morphism. The \emph{generic fiber} is the fiber over the generic point.

We use Cartier divisors on non-normal (but integral) schemes at plenty of places. In these situations we do mean the original definition of Cartier divisors \cite[p. 141]{Hartshorne_Algebraic_geometry}, not the Weil divisor defined by it, since the latter does not make sense at the singular codimension one points. Note that a Cartier divisor $D$ is effective if $1 \in \sL(D)$, where $\sL(D) \subseteq \sK(X)$ is the line bundle corresponding to $D$ as introduced on \cite[p. 144]{Hartshorne_Algebraic_geometry}. Let  $\Ca(X)$ be the group of Cartier divisors on $X$. This is not the class group, so equality in $\Ca(X)$ means actual equality. Then the group of $\bQ$-Cartier divisors is $\bQ$-$\Ca(X)=\Ca(X) \otimes_{\bZ} \bQ$. 

For a proper variety $X$ over $k$ with structure morphism $\nu : X \to \Spec k$, the canonical divisor is $\omega_X := \sH^{-\dim X}(\omega_X^\bullet) := \sH^{-\dim X}(\nu^! \sO_{\Spec k})$ as defined in \cite{Hartshorne_Residues_and_duality}. The canonical divisor $K_X$ is any representative Cartier divisor if $\omega_X$ is a line bundle or any representative Weil divisor if $X$ is normal. When it does not cause any misunderstanding, pullback is denoted by lower index. E.g., if $\sF$ is a sheaf on $X$, and $X \to Y$ and $Z \to Y$ are morphisms, then $\sF_Z$ is the pullback of $\sF$ to $X \times_Y Z$. If $\sF$ is a coherent sheaf on a scheme then $\sF^{[m]}:= \left(\sF^{\otimes m}\right)^{**}$ is the $m$-th reflexive power for any integer $m \geq 0$. These are the sheaves that obey Hartog's theorem on an $S_2$, $G_1$ scheme, i.e., they extend uniquely from an open set obtained by deleting a closed subset of codimension at least two \cite{Hartshorne_Generalized_divisors_on_Gorenstein_schemes}.

\subsection{Acknowledgement}

I am grateful  to Karl Schwede for reading a very preliminary version of the article, and to Christopher Hacon for comments on another slightly less preliminary version. I am also grateful to Yifei Chen for sending comments on the article. 

\section{Cartier modules}
\label{sec:Cartier_modules}

In the present article we use only a special case of what is defined to be a Cartier module in \cite[Definition 8.0.1]{Blickle_Schwede_p_to_the_minus_one_linear_maps_in_algebra_and_geometry}. Note that the original definition of Cartier modules was given earlier in \cite{Blickle_Bockle_Cartier_modules_finiteness_results}. 

\begin{definition} \cite[Definition 8.0.1]{Blickle_Schwede_p_to_the_minus_one_linear_maps_in_algebra_and_geometry}
A Cartier module over a scheme $X$ of positive characteristic is a triple $(M,\tau,g)$, where $M$ is a coherent sheaf of $\sO_X$-modules, $g>0$ an integer and $\tau : F^g_* M \to M$ is an $\sO_X$-linear homomorphism. In this setting $\tau^e : F^{ge}_* M \to M$ is defined as the composition of the following homomorphisms.
\begin{equation*}
\xymatrix@C=50pt{
F^{ge}_* M \ar[r]^{F^{g(e-1)}_* \tau } & F^{g(e-1)}_* M \ar[r]^{F^{g(e-2)}_* \tau} &  \dots \ar[r]^{F^{g}_* \tau} & F^g_* M \ar[r]^\tau & M 
}
\end{equation*}
\end{definition}

\begin{proposition} \cite[Lemma 13.1]{Gabber_Notes_on_some_t_structures} \cite[Proposition 8.1.4]{Blickle_Schwede_p_to_the_minus_one_linear_maps_in_algebra_and_geometry}
\label{prop:Cartier_module_stabilizes}
If  $(M,\tau,g)$ is a Cartier module on a scheme $X$ essentially of finite type over  $k$, then the descending chain $M \supseteq \im \tau \supseteq \im \tau^2 \supseteq \dots $ stabilizes.
\end{proposition}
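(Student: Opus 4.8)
The plan is to reduce the question to an affine, indeed a complete local, base, to settle the complete local case by Matlis duality, and then to patch the local statements together by a Noetherian argument on supports; this is essentially the route taken in \cite{Blickle_Bockle_Cartier_modules_finiteness_results}. For the reduction to the affine case, note that $\im\tau^{e}$ is a coherent subsheaf of $M$, so the descending chain stabilizes if and only if it does on each member of a finite affine open cover of $X$ (take the maximum of the finitely many stabilization indices). Hence we may assume $X=\Spec R$ with $R$ essentially of finite type over $k$; then $R$ is Noetherian and, since $k$ is perfect, $F$-finite, so $F^{g}_{*}M$ is a finite $R$-module. There is no need to reduce to $g=1$: the chain $M\supseteq\im\tau\supseteq\im\tau^{2}\supseteq\cdots$ only involves the iterates $\tau^{e}$, so I abbreviate $\Phi:=F^{g}$ (a finite morphism) and $\tau\colon\Phi_{*}M\to M$.

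\emph{The complete local case.} Suppose $(R,\mathfrak m)$ is local. As $R$ is excellent, $R\to\hat R$ is faithfully flat, completion is exact on finite modules and commutes with $\Phi^{e}_{*}(-)$ of finite modules, so $\widehat{\im\tau^{e}}=\im(\hat\tau^{e})$ and the chain stabilizes over $R$ if and only if it does over $\hat R$; thus we may assume $R$ is complete local. Let $D(-)=\Hom_{R}(-,E_{R}(k))$ be Matlis duality, an anti-equivalence between finite and Artinian $R$-modules. Dualizing the exact sequence $\Phi^{e}_{*}M\xrightarrow{\tau^{e}}M\to\coker\tau^{e}\to 0$ and using the standard identification $D(\Phi^{e}_{*}(-))\cong\Phi^{e}_{*}D(-)$ (duality for the finite morphism $\Phi^{e}$; see \cite{Hartshorne_Residues_and_duality}, \cite{Blickle_Bockle_Cartier_modules_finiteness_results}), the map $\theta:=D(\tau)\colon N\to\Phi_{*}N$ is a $p^{g}$-linear (Frobenius) endomorphism of the Artinian module $N:=D(M)$, and $D(\coker\tau^{e})=\ker\theta^{e}$. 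Since $D$ is an anti-equivalence, $\{\coker\tau^{e}\}$ stabilizes if and only if the ascending chain $\{\ker\theta^{e}\}$ of submodules of $N$ stabilizes; the latter is exactly the Hartshorne--Speiser--Lyubeznik lemma on Frobenius actions on Artinian modules (reproved in \cite{Blickle_Bockle_Cartier_modules_finiteness_results}). As $\{\im\tau^{e}\}$ stabilizes if and only if $\{\coker\tau^{e}=M/\im\tau^{e}\}$ does, the complete local case follows.

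\emph{Patching.} Back on a general affine $X=\Spec R$, set $Z_{e}:=\Supp\bigl(\im\tau^{e}/\im\tau^{e+1}\bigr)$, a closed subset. If $x\notin Z_{e}$ then $(\im\tau^{e})_{x}=(\im\tau^{e+1})_{x}$, and applying $\tau_{x}$, which carries $(\im\tau^{j})_{x}$ into $(\im\tau^{j+1})_{x}$, gives $(\im\tau^{e+1})_{x}=(\im\tau^{e+2})_{x}$, so $x\notin Z_{e+1}$; hence $Z_{1}\supseteq Z_{2}\supseteq\cdots$. Since $X$ is Noetherian this chain of closed subsets stabilizes, $Z_{e}=Z_{e_{0}}$ for $e\ge e_{0}$. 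On the other hand the complete local case applied to each $\sO_{X,x}$ shows that $(\im\tau^{e})_{x}$ stabilizes, so every point $x$ lies outside some $Z_{e}$, hence outside $Z_{e_{0}}$ (the $Z_{e}$ being nested). Therefore $Z_{e_{0}}=\emptyset$, that is, $\im\tau^{e}=\im\tau^{e+1}$ for all $e\ge e_{0}$.

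The main obstacle is the complete local case, and inside it the Hartshorne--Speiser--Lyubeznik lemma: a direct attack fails because $\{\im\tau^{e}\}$ is \emph{descending} while a finite module only satisfies the ascending chain condition, and the whole point of Matlis duality is precisely to flip the chain into an ascending one sitting inside an Artinian module, where the Frobenius structure can finally be exploited. A secondary, more technical point is the clean identification of the Matlis dual of a Cartier module with a Frobenius module (i.e. $D(\Phi^{e}_{*}(-))\cong\Phi^{e}_{*}D(-)$), which is standard for $F$-finite rings but deserves a careful reference.
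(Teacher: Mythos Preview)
The paper does not give its own proof of this proposition; it simply cites Gabber and Blickle--Schwede, so there is nothing to compare against directly. Your argument is correct and is essentially the standard proof from the cited literature (ultimately Gabber's, reworked in Blickle--B\"ockle): reduce to affine, pass to the completion, Matlis-dualize to turn the descending chain of images into the ascending chain of kernels of a Frobenius action on an Artinian module, invoke Hartshorne--Speiser--Lyubeznik, and then globalize via the Noetherian argument on the supports $Z_e$.

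Two small points worth tightening. First, when you say ``the complete local case applied to each $\sO_{X,x}$'' in the patching step, you are of course using the \emph{local} case, which you have already reduced to the complete local case by faithful flatness of $R\to\hat R$; the phrasing could be made cleaner. Second, the identification $D(\Phi^{e}_{*}M)\cong\Phi^{e}_{*}D(M)$ you use is the statement that $\Phi^{!}E\cong E$ for the injective hull $E$ over an $F$-finite complete local ring; this is indeed standard (and you flag it), but it is the one genuinely non-formal input besides HSL, so if you were writing this out in full you would want to pin down the reference precisely (e.g.\ \cite{Blickle_Bockle_Cartier_modules_finiteness_results}). Otherwise the argument is complete.
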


\begin{definition} 
The stable image of \autoref{prop:Cartier_module_stabilizes} is denoted by $S^0 M$. 
\end{definition}

Note that $\sigma(M)$ is also used in the literature instead of $S^0 M$. We use $S^0 M$ since it aligns with the notation $S^0 f_*(\sigma(X, \Delta) \otimes L)$ introduced in \cite[Definition 2.14]{Hacon_Xu_On_the_three_dimensional_minimal_model_program_in_positive_characteristic}, which will be the only example of Cartier modules used in the present article. 

\begin{lemma} \cite[Proposition 8.1.4]{Blickle_Schwede_p_to_the_minus_one_linear_maps_in_algebra_and_geometry}
\label{lem:Cartier_module_stable_surjective}
If  $(M,\tau,g)$ is a Cartier module on a scheme $X$ essentially of finite type over  $k$, then the restriction of $\tau$ to $S^0M$ induces a surjective homomorphism $\tau : F^g_* S^0 M \to S^0 M$. In particular, $(S^0M, \tau|_{S^0 M }, g)$ is a Cartier module with surjective structure homomorphism. 
\end{lemma}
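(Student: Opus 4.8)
The plan is to push the stabilization of \autoref{prop:Cartier_module_stabilizes} one step beyond where it halts and then read off the surjectivity from the recursive description of the iterates $\tau^e$. Concretely, I would first fix, using \autoref{prop:Cartier_module_stabilizes}, an integer $N$ with $\im \tau^N = \im \tau^{N+1}$ as subsheaves of $M$; then $S^0 M = \im \tau^N$, and this subsheaf is automatically coherent since $X$ is Noetherian and $M$ is coherent.

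Next I would record the elementary factorization visible in the diagram defining $\tau^e$: performing all of the arrows except the last one realizes the composite $F^{g(e+1)}_* M \to F^g_* M$ precisely as $F^g_* \tau^e$, so that
\begin{equation*}
\tau^{e+1} = \tau \circ F^g_* \tau^e \colon F^{g(e+1)}_* M \longrightarrow F^g_* M \longrightarrow M
\end{equation*}
for every integer $e \ge 1$. Since the absolute Frobenius $F$ is a finite (in particular affine) morphism in this setting, $F^g_*$ is exact and hence commutes with images: $F^g_* S^0 M = F^g_* \im \tau^N = \im\bigl(F^g_* \tau^N\bigr)$ as subsheaves of $F^g_* M$. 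Combining these observations,
\begin{equation*}
\tau\bigl(F^g_* S^0 M\bigr) = \tau\bigl(\im F^g_* \tau^N\bigr) = \im\bigl(\tau \circ F^g_* \tau^N\bigr) = \im \tau^{N+1} = \im \tau^N = S^0 M .
\end{equation*}
In particular $\tau$ carries $F^g_* S^0 M$ into, and in fact onto, $S^0 M$, which is exactly the claimed surjectivity of $\tau|_{F^g_* S^0 M} \colon F^g_* S^0 M \to S^0 M$. The final assertion is then immediate: $S^0 M$ is a coherent $\sO_X$-module equipped with the $\sO_X$-linear homomorphism $\tau|_{S^0 M}$, so $(S^0 M, \tau|_{S^0 M}, g)$ is a Cartier module, and we have just shown its structure homomorphism is surjective.

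There is no genuine obstacle here; the argument is purely formal once \autoref{prop:Cartier_module_stabilizes} is available. The only two points one should not skip are (i) the exactness of $F^g_*$, which is what allows the functor $F^g_*$ to be pulled through the image in the displayed chain of equalities, valid because $F$ is affine; and (ii) the coherence of $S^0 M$, which follows from $X$ being Noetherian together with $S^0 M \subseteq M$ coherent. Everything else is just unwinding the definition of $\tau^{e+1}$.
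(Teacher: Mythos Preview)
Your proof is correct and is exactly the standard argument: once the chain stabilizes at $\im \tau^N$, the factorization $\tau^{N+1} = \tau \circ F^g_* \tau^N$ together with exactness of $F^g_*$ immediately gives $\tau(F^g_* S^0 M) = S^0 M$. The paper itself does not supply a proof of this lemma at all---it simply cites \cite[Proposition 8.1.4]{Blickle_Schwede_p_to_the_minus_one_linear_maps_in_algebra_and_geometry}---so there is nothing further to compare; your argument is precisely what that reference contains.
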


We do not claim any originality of the following theorem. The method is from \cite{Keeler_Fujita_s_conjecture_and_Frobenius_amplitude} (revised in \cite{Schwede_A_canonical_linear_system}) and the fact that it works also for Cartier modules was communicated by Karl Schwede in a personal conversation.

\begin{theorem} (c.f., \cite[Theorem]{Schwede_A_canonical_linear_system})
\label{thm:Cartier_module_globally_generated}
If $(M, \tau , g)$ is a coherent Cartier module on a projective  scheme $X$ of dimension $n$ over $k$, $A$ is an ample globally generated line bundle and $H$ is an ample line bundle on $X$, then $S^0 M \otimes A^n \otimes H$ is globally generated. 
\end{theorem}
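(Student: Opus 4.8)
The plan is to adapt Mumford's classical argument proving global generation via Castelnuovo--Mumford regularity, with Serre vanishing replaced by the Frobenius-amplitude trick from \cite{Keeler_Fujita_s_conjecture_and_Frobenius_amplitude}. First I would recall the basic mechanism: a coherent sheaf $\sF$ on $X$ is globally generated if it is $0$-regular with respect to an ample globally generated line bundle $A$, i.e.\ if $H^i(X, \sF \otimes A^{-i}) = 0$ for all $i > 0$. So the goal reduces to showing that $S^0 M \otimes A^n \otimes H$ is $0$-regular with respect to $A$, which amounts to the vanishings
\begin{equation*}
H^i\!\left(X, S^0 M \otimes A^{n-i} \otimes H\right) = 0 \qquad \text{for all } i > 0.
\end{equation*}
Since $A$ is ample and globally generated and $H$ is ample, each twisting line bundle $A^{n-i}\otimes H$ for $0 \le i \le n$ is ample; for $i > n$ the exponent $n-i$ is negative but then we are in cohomological degree exceeding $\dim X$, so those groups vanish automatically. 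Thus everything comes down to proving $H^i(X, S^0 M \otimes N) = 0$ for all $i > 0$ and all ample line bundles $N$ of the form $A^{j}\otimes H$, $j\ge 0$.

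The heart of the argument is to use the surjective Cartier structure on $S^0 M$ from \autoref{lem:Cartier_module_stable_surjective}. Write $N_0 := S^0 M$, with surjection $\tau : F^g_* N_0 \twoheadrightarrow N_0$. Tensoring with an ample line bundle $N$ and using the projection formula, $\tau$ induces a surjection $F^g_*\!\left(N_0 \otimes N^{p^g}\right) \twoheadrightarrow N_0 \otimes N$ (using that $F^{g,*}N \cong N^{p^g}$). More generally, iterating $\tau^e$ gives surjections $F^{ge}_*\!\left(N_0 \otimes N^{p^{ge}}\right) \twoheadrightarrow N_0 \otimes N$. Because $F$ is a finite morphism (as $k$ is perfect and $X$ is of finite type over $k$), pushforward along $F^{ge}$ is exact and preserves cohomology: $H^i\!\left(X, F^{ge}_*\mathcal{G}\right) = H^i(X,\mathcal{G})$. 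Let $\mathcal{K}_e$ be the kernel, so there is a short exact sequence
\begin{equation*}
0 \to \mathcal{K}_e \to F^{ge}_*\!\left(N_0 \otimes N^{p^{ge}}\right) \to N_0 \otimes N \to 0 .
\end{equation*}
The induced long exact sequence shows $H^i(X, N_0\otimes N)$ receives a surjection from $H^i\!\left(X, N_0\otimes N^{p^{ge}}\right)$ whenever $H^{i+1}(X,\mathcal{K}_e)$ is suitably controlled, or more directly: one shows by descending induction on $i$, or by a direct Serre-type argument, that the natural maps $H^i(X, N_0\otimes N)$ are hit by $H^i$ of a sheaf twisted by the arbitrarily positive line bundle $N^{p^{ge}}$.

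The cleanest route, following \cite{Keeler_Fujita_s_conjecture_and_Frobenius_amplitude} and its revision in \cite{Schwede_A_canonical_linear_system}, is this: by Serre vanishing there is an $e_0$ so that $H^i\!\left(X, M \otimes N^{p^{ge}}\right) = 0$ for all $i>0$ once $e \ge e_0$ (here one twists the whole coherent $M$, not just $S^0 M$, to have a fixed sheaf); since $S^0 M = \im \tau^{e}$ for $e$ large, one writes $S^0 M$ as a quotient of $F^{ge}_* M$ and, via the exactness of $F^{ge}_*$ and the projection formula, deduces $H^i\!\left(X, S^0 M \otimes N\right)$ is a quotient of $H^i\!\left(X, F^{ge}_*(M\otimes N^{p^{ge}})\right) = H^i\!\left(X, M\otimes N^{p^{ge}}\right) = 0$. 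Applying this with $N = A^{n-i}\otimes H$ for each $i=1,\dots,n$ gives exactly the regularity vanishings, and Mumford's theorem then yields global generation of $S^0 M \otimes A^n \otimes H$. I expect the main obstacle to be bookkeeping the interplay between ``$S^0 M = \im \tau^e$ for $e \gg 0$'' and ``$e \ge e_0$ for Serre vanishing'': one must choose a single $e$ that simultaneously stabilizes the image chain and makes all finitely many twists $M\otimes(A^{n-i}\otimes H)^{p^{ge}}$ have vanishing higher cohomology, and then verify that the surjection $\tau^e$ on $S^0 M$ is genuinely compatible with the projection-formula identifications after tensoring by the chosen ample line bundles. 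This compatibility, together with the finiteness (hence exactness and cohomology-preservation) of Frobenius, is the only subtle point; the rest is the standard Castelnuovo--Mumford package.
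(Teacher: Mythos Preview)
Your overall strategy---Serre vanishing on $M$ twisted by $(A^{n-i}\otimes H)^{p^{ge}}$, projection formula, Castelnuovo--Mumford regularity---is exactly the paper's, but there is a genuine slip in the ``cleanest route'' paragraph. You assert that $H^i(X, S^0 M \otimes N)$ is a quotient of $H^i\bigl(X, F^{ge}_*(M\otimes N^{p^{ge}})\bigr)$. A surjection of coherent sheaves does \emph{not} in general induce a surjection on $H^i$ for $i>0$: from $0 \to \mathcal{K}_e \to F^{ge}_* M \otimes N \to S^0 M \otimes N \to 0$ you would need $H^{i+1}(X,\mathcal{K}_e)=0$, and since $\mathcal{K}_e$ varies with $e$ there is no direct Serre-vanishing argument for it. You flagged this obstruction yourself and then set it aside; as written, the step fails.

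The paper's fix is a simple reordering of your own ingredients: do not try to prove that $S^0 M \otimes A^n \otimes H$ is $0$-regular. Instead, prove that $F^{ge}_* M \otimes A^n \otimes H$ is $0$-regular for $e \gg 0$---this is precisely your computation $H^i\bigl(X,F^{ge}_* M \otimes A^{n-i}\otimes H\bigr)\cong H^i\bigl(X,M\otimes (A^{n-i}\otimes H)^{p^{ge}}\bigr)=0$ by Serre vanishing. Mumford's theorem then gives global generation of $F^{ge}_* M \otimes A^n \otimes H$, and global generation \emph{does} pass to quotients, so the surjection $F^{ge}_* M \twoheadrightarrow S^0 M$ (for $e$ large) finishes the proof. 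In short: use the surjection at the level of global generation, not at the level of higher cohomology.
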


\begin{proof}
By \autoref{prop:Cartier_module_stabilizes},  $\tau^e : F^{ge}_*  M \to S^0M$ is surjective  for every integer $e  \gg 0$. Therefore it is enough to prove that $F^{ge}_* M \otimes A^n \otimes H$ is globally generated for every $e \gg 0$. Hence, by \cite[Theorem 1.8.5]{Lazarsfeld_Positivity_in_algebraic_geometry_I} it is enough to prove that for every $e \gg 0$ and $i>0$,
\begin{equation*}
H^i(X,F^{ge}_* M  \otimes A^{n-i} \otimes H) = 0 .
\end{equation*}
On the other hand since $X$ is $F$-finite, $F$ is a finite morphism, and hence
\begin{equation*}
H^i(X,F^{ge}_* M \otimes A^{n-i} \otimes H) \cong H^i(X, M  \otimes F^{ge,*} A^{n-i} \otimes  F^{ge,*}H)  \cong H^i \left(X, M  \otimes  (A^{n-i} \otimes  H)^{p^{ge}} \right).
\end{equation*}
Since $n -i \geq 0$ and both $A$ and $H$ are ample, Serre-vanishing concludes our proof.
\end{proof}

For the next lemma recall the notion $S^0 f_* (\sigma (X,\Delta) \otimes M)$ introduced in \cite[Definition 2.14]{Hacon_Xu_On_the_three_dimensional_minimal_model_program_in_positive_characteristic}.

\begin{lemma}
\label{lem:we_indeed_define_a_Cartier_module}
Given  a proper morphism of varieties $f : X \to Y$ over $k$ such that $\omega_X$ is a line bundle, a line bundle $M$ on $X$, an integer $g>0$ and an effective $\bQ$-Cartier divisor $\Delta$, such that $(p^g -1) (K_X+ \Delta)$ is a $(\bZ-)$Cartier divisor  and $(p^g -1) (K_X+ \Delta) \sim (p^g -1) M$, the chain
\begin{multline}
\label{eq:we_indeed_define_a_Cartier_module:one_step}
\xymatrix{
\dots \ar[r] & 
f_* F^{g(e+1)}_* \sO_X \left(  p^{g(e+1)} M  + \left(1-p^{g(e+1)} \right) (K_X+ \Delta) \right)  
\ar[r]&}
\\
\xymatrix{\ar[r] &  
f_* F^{ge}_* \sO_X ( p^{ge} M - (1-p^{ge}) (K_X+ \Delta) ) 
\ar[r] & \dots
}
\end{multline}
known from the theory of $F$-singularities is isomorphic to 
\begin{equation*}
\xymatrix@C=45pt{
\dots \ar[r] & 
  F^{g(e+1)}_* f_* M
\ar[r]^{ F^{ge}_* f_*(\alpha)} &
 F^{ge}_* f_*  M
\ar[r]^{ F^{g(e-1)}_* f_* (\alpha)}  &
 F^{g(e-1)}_* f_* M
\ar[r] & \dots,
}
\end{equation*}
where $\alpha$ is the following homomorphism induced by the Grothendieck trace of Frobenius.
\begin{equation*}
F_*^g M
 \cong F_*^g \sO_{X}( p^g M +  (1-p^g) (K_X + \Delta))
  \cong M \otimes F_*^g \sO_{X}((1-p^g) (K_X+ \Delta))
   \to M 
\end{equation*}
In particular, $(f_* M, f_* (\alpha), g)$ and $\left( S^0 f_* (\sigma (X,\Delta) \otimes M), f_*(\alpha)|_{S^0 f_* (\sigma (X,\Delta) \otimes M)}, g \right)$ are Cartier modules.


\end{lemma}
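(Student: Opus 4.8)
The plan is to construct the isomorphism of the two chains level-by-level and check compatibility with the transition maps, then deduce the Cartier module statements formally. The heart of the matter is the sequence of identifications
\begin{equation*}
F^{ge}_* \sO_X\bigl(p^{ge} M - (1-p^{ge})(K_X+\Delta)\bigr) \cong F^{ge}_* M \otimes \sO_X,
\end{equation*}
where on the left the twist by $(1-p^{ge})(K_X+\Delta)$ is rewritten using the relation $(p^g-1)(K_X+\Delta) \sim (p^g-1)M$. First I would record the telescoping identity $(p^{ge}-1)(K_X+\Delta) = (p^{g(e-1)}+p^{g(e-2)}+\dots+1)(p^g-1)(K_X+\Delta) \sim (p^{ge}-1)M$, which shows $p^{ge}M - (1-p^{ge})(K_X+\Delta) = p^{ge}M + (p^{ge}-1)(K_X+\Delta) \sim p^{ge}M - (p^{ge}-1)M = M$ as Cartier divisors up to linear equivalence; this gives the isomorphism $\sO_X(p^{ge}M - (1-p^{ge})(K_X+\Delta)) \cong M$ at each level $e$, and pushing forward along $F^{ge}$ and then $f$ gives the termwise isomorphism between the two chains.

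Next I would check that these termwise isomorphisms intertwine the transition maps. On the $F$-singularities side, the map in \eqref{eq:we_indeed_define_a_Cartier_module:one_step} is the one built from the Grothendieck trace $F^g_* \omega_X \to \omega_X$ tensored up appropriately; on the other side it is $F^{g(e-1)}_* f_*(\alpha)$ where $\alpha$ factors as $F^g_* M \cong M \otimes F^g_* \sO_X((1-p^g)(K_X+\Delta)) \to M$, the last arrow being the trace map after using $F^g_* \sO_X((1-p^g)(K_X+\Delta)) \cong \sHom_{\sO_X}(F^g_* \sO_X, \sO_X) \otimes (\text{line bundle})$ — concretely, $(1-p^g)(K_X+\Delta) = (1-p^g)K_X - (p^g-1)\Delta$, and the $(1-p^g)K_X = K_X - p^g K_X$ part is exactly what Grothendieck duality for the finite flat morphism $F^g$ converts the trace into. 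So the claimed commutativity is just the statement that these two descriptions of the trace-induced map agree after the divisor bookkeeping, which is a diagram chase using functoriality of the trace under tensoring by line bundles and under composition $F^{g(e-1)}_* \circ F^g_*$. I would spell out one square of the diagram and note the general case follows by the same computation shifted by $F^{g(e-1)}_*$.

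Finally, for the "In particular" clause: applying $f_*$ to the chain gives, by the identification just established, a chain $\dots \to F^{g(e+1)}_* f_* M \to F^{ge}_* f_* M \to \dots$ whose maps are $F^{g(\bullet)}_* f_*(\alpha)$, so $(f_* M, f_*(\alpha), g)$ is a Cartier module over $Y$ essentially of finite type over $k$ (using that $f$ is proper, so $f_* M$ is coherent, and that $f_* F^g_* = F^g_* f_*$ by functoriality of pushforward and the fact that $F$ commutes with $f$ up to the usual base-change square). Then $S^0 f_*(\sigma(X,\Delta)\otimes M)$ is by its definition in \cite[Definition 2.14]{Hacon_Xu_On_the_three_dimensional_minimal_model_program_in_positive_characteristic} precisely the stable image of this chain, i.e. $S^0$ of the Cartier module $(f_* M, f_*(\alpha), g)$ in the sense of \autoref{prop:Cartier_module_stabilizes}, and by \autoref{lem:Cartier_module_stable_surjective} the restriction of $f_*(\alpha)$ makes it a Cartier module with surjective structure map.

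\textbf{Main obstacle.} The routine-looking but genuinely fiddly step is the compatibility of the transition maps — i.e. verifying that the isomorphism of chains really is an isomorphism of \emph{directed systems}, not just a termwise isomorphism of sheaves. This requires matching the trace map coming from "the theory of $F$-singularities" (where it is packaged as $F^{ge}_* \sO_X(p^{ge}M + (1-p^{ge})(K_X+\Delta)) \to F^{g(e-1)}_* \sO_X(p^{g(e-1)}M + (1-p^{g(e-1)})(K_X+\Delta))$) with the iterated $\alpha$, and the bookkeeping of which power of $p$ multiplies $K_X+\Delta$ at each stage must be tracked carefully through the Grothendieck duality isomorphism $F^{g,!}\sO_X \cong \sO_X((1-p^g)K_X)[\text{shift}]$; getting the signs and the $\Delta$-contributions to line up is where an error would most likely creep in.
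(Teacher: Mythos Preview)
Your proposal is correct and follows essentially the same route as the paper: establish the termwise isomorphism via the linear equivalence $p^{ge}M + (1-p^{ge})(K_X+\Delta) \sim M$, verify compatibility of the transition maps using the projection formula for $F^g$ and the trace, and then invoke $f_* F_* = F_* f_*$ together with \autoref{lem:Cartier_module_stable_surjective}. One slip to fix: you correctly derive $(p^{ge}-1)(K_X+\Delta) \sim (p^{ge}-1)M$ but then substitute $-(p^{ge}-1)M$ in the next line --- this appears to be induced by the sign typo in the second displayed term of the statement (it should read $+ (1-p^{ge})$, as in the first term and as the paper's proof uses), and once corrected your computation goes through unchanged.
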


\begin{proof}
We claim that \eqref{eq:we_indeed_define_a_Cartier_module:one_step} is isomorphic to 
\begin{equation}
\label{eq:we_indeed_define_a_Cartier_module:stream}
\xymatrix@C=45pt{
\dots \ar[r] & 
 f_* F^{g(e+1)}_* M
\ar[r]^{f_* F^{ge}_*(\alpha)} &
f_* F^{ge}_*  M
\ar[r]^{f_* F^{g(e-1)}_*(\alpha)}  &
f_* F^{g(e-1)}_*  M
\ar[r] & \dots,
}.
\end{equation}
Then the statement of the lemma follows since $F_* f_* = f_* F_*$. 

To show our claim, note that the map of \eqref{eq:we_indeed_define_a_Cartier_module:one_step}, using projection formula for $F$, is
\begin{multline*}
f_*  F_*^{ge}  \Big(  \sO_{X} \big(  p^{ge} M + (1-p^{ge}) (K_X + \Delta) \big)  \otimes F_*^g \sO_{X} \big( (1-p^g) (K_X + \Delta)\big) \Big)   \to \\
\to f_*  F_*^{ge}    \sO_{X} \big(  p^{ge} M + (1-p^{ge}) (K_X + \Delta) \big) ,
\end{multline*}
where Frobenius trace is applied to $F^g_* \sO_X((1-p)(K_X + \Delta) )$ and everything else is sent via identity. However, this map agrees with $f_* F^{ge}_* (\alpha)$ of \eqref{eq:we_indeed_define_a_Cartier_module:stream}, since 
\begin{equation*}
p^{ge} M + (1-p^{ge}) (K_X + \Delta) \sim M. 
\end{equation*}
\end{proof}

\begin{remark}
In \autoref{lem:we_indeed_define_a_Cartier_module}, if we assumed $X$ to be   $S_2$ and $G_1$, then we could  assume $\Delta$ to be a $\bQ$-divisorial sheaf \cite[Section 2.1]{Miller_Schwede_Semi_log_canonical}, or if $X$ was normal, then $\Delta$ could be a $\bQ$-Weil-divisor. The statement and the proof would be verbatim the same. On the other hand, in the present article we only use the case of a $\bQ$-Cartier divisor $\Delta$ as stated in \autoref{lem:we_indeed_define_a_Cartier_module}.
\end{remark}

\section{Behavior of relative Cartier modules}
\label{sec:relative_Cartier_modules}

In \cite[Definition 4.1]{Schwede_A_canonical_linear_system} a subgroup of $S^0(X, L) \subseteq H^0(X, L)$ was introduced, where $X$ is an arbitrary scheme of finite type over $k$ and $L$ a line bundle on $X$. One of the fundamental usages of this subgroup is  that its elements lift from sharply $F$-pure centers \cite[Prop 5.3]{Schwede_A_canonical_linear_system}, which had many applications recently in higher dimensional geometry (e.g., \cite{Hacon_Xu_On_the_three_dimensional_minimal_model_program_in_positive_characteristic,Patakfalvi_Semi_positivity_in_positive_characteristics}). A natural question to ask is then how does this subgroup behave in families. A theory concerning this was worked out in  \cite{Patakfalvi_Schwede_Zhang_F_singularities_in_families}.  However, to apply it in the setting of the current article it needs some modifications, since there relative ampleness on the fibers was used in an essential way. Luckily, if one concerns the special case of  $S^0(F,\sigma(F,\Delta_F) \otimes M_F)$, 
where $S^0 f_* (\sigma(X,\Delta) \otimes M)$ is a Cartier module as in \autoref{lem:we_indeed_define_a_Cartier_module}, it turns out that the relative ampleness is not needed. Our setup is the following.

\begin{notation}
\label{notation:openness_S_0}
Let $f : X \to Y$ be a proper, surjective morphism of varieties over $k$ with  $Y$ regular. Let $\xi$  be the generic  point of $Y$. Assume that $\Delta$ is a $\bQ$-Cartier divisor, $M$ is a line bundle on $X$ and $g>0$ an integer, such that 
\begin{enumerate}
\item $X_\xi$ is geometrically integral,
\item $X_\xi$ is $S_2$ and $G_1$,
\item $\Delta_\xi$ is  effective, 
\item \label{itm:openness_S_0:linear_equivalence} $(1-p^g)(K_{X_\xi} +   \Delta_\xi) \sim  (1-p^g) M_\xi$.
\end{enumerate}
\end{notation}

\begin{remark}
Similarly to \autoref{lem:we_indeed_define_a_Cartier_module}, in \autoref{notation:openness_S_0} if  we assumed $X$ to be $S_2$, $G_1$ as well then we could  assume $\Delta$ to be a   $\bQ$-Weil-divisor that avoids the codimension one singular points. Similarly, we could assume both $X$ and $X_\eta$ to be normal, and then $\Delta$ could be just any $\bQ$-Weil-divisor. The statements of the section and the proofs would be verbatim the same. On the other hand, in the present article we only use the generality stated in \autoref{notation:openness_S_0}.
\end{remark}

Our main goal  in this section is the following. Recall that a perfect point of $W$ is  a morphism $\Spec k' \to W$ where $k'$ is a perfect field. 

\begin{theorem}
\label{thm:openness_S_0}
In the situation of \autoref{notation:openness_S_0}, there is a Zariski open set $W$ of $Y$ such that $S^0(F,\sigma(F, \Delta_F) \otimes M_F)$ has the same dimension for every fiber $F$ over every perfect point of $W$. Further, the rank of $S^0 f_* \left(\sigma(X_W, \Delta|_W) \otimes M|_W \right)$ is at least as big as this general value.
\end{theorem}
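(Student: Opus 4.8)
The plan is to reduce the statement about all perfect fibers to a statement about a single generic fiber, and then extract openness from the behavior of a Cartier module over the base. First I would set up the relevant Cartier module on $Y$: by \autoref{lem:we_indeed_define_a_Cartier_module} applied to $f : X \to Y$ (or rather to the open locus where the hypotheses of \autoref{notation:openness_S_0} are inherited fiberwise), the sheaf $N := S^0 f_* (\sigma(X, \Delta) \otimes M)$ carries a Cartier module structure $(N, \beta, g)$ with surjective structure map after passing to $S^0 N = N$ by \autoref{lem:Cartier_module_stable_surjective}. The first key step is to compare the formation of $S^0$ with restriction to fibers. Over the generic point $\xi$, flat base change commutes with the pushforwards $f_* F^{ge}_* M$ appearing in \eqref{eq:we_indeed_define_a_Cartier_module:stream}, so the stabilized image $S^0$ of the chain over a neighborhood of $\xi$ restricts to the stabilized image of the analogous chain for the generic fiber; hence $N_\xi \cong S^0(X_\xi, \sigma(X_\xi,\Delta_\xi) \otimes M_\xi)$, and the same holds over every point of a suitable open $W_0 \subseteq Y$.

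Next I would handle the passage from the generic fiber to arbitrary perfect fibers, which is where the Frobenius-stable nature of $S^0$ is essential: $S^0$ does not commute with arbitrary base change, but by \autoref{prop:Cartier_module_stabilizes} the descending chain $\im \beta^e$ stabilizes after finitely many steps, say at step $e_0$, and the surjection $\tau^{e_0} : F^{ge_0}_* (f_* F^{g e_0}_* M) \twoheadrightarrow N$ is a map of coherent sheaves on $Y$. Now I would apply generic flatness and coherence: there is an open $W \subseteq W_0$ over which all the finitely many coherent sheaves $f_* F^{ge}_* M$ for $e \le e_0$, together with $N$ and the images $\im \beta^e$, are flat, their formation commutes with base change to residue fields, and the stabilization still occurs at step $e_0$ fiberwise. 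The point is that over such $W$ the fiber at a perfect point $y$ of the chain \eqref{eq:we_indeed_define_a_Cartier_module:stream} is exactly the corresponding chain for the fiber $F = X_y$ (using that $F_*$ commutes with flat base change and that $y$ perfect makes the fiberwise Frobenius well-behaved), so its stable image is $S^0(F, \sigma(F,\Delta_F)\otimes M_F)$. Since $N$ is flat and commutes with base change over $W$, the dimension of $S^0(F, \sigma(F,\Delta_F)\otimes M_F) = N \otimes \kappa(y)$ is locally constant on $W$, hence constant after shrinking $W$ to be irreducible; and $\rk N$ over $W$ equals this common fiber dimension, giving the last assertion. (The inequality ``at least'' in the statement is then in fact an equality over $W$, but one only needs the stated bound.)

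The main obstacle is the middle step: controlling $S^0$ — a quantity defined by a stabilizing limit of Frobenius iterates — in a family, because neither $F_*$ nor the stabilization index behaves well under base change in general. The resolution is precisely the finiteness in \autoref{prop:Cartier_module_stabilizes}: once the chain stabilizes at a uniform step $e_0$ over the base, $S^0$ is expressed by a single coherent cokernel/image, and then standard generic flatness and cohomology-and-base-change arguments apply. One must be careful that the hypotheses (1)–(4) of \autoref{notation:openness_S_0} — geometric integrality, $S_2$ and $G_1$, effectivity of $\Delta_\xi$, and the linear equivalence in item \autoref{itm:openness_S_0:linear_equivalence} — spread out from $\xi$ to an open set, which is routine: each is an open condition or is witnessed by an isomorphism of line bundles over an open set. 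Finally, that the fiberwise $S^0$ over a perfect point agrees with the abstract construction of \cite[Definition 4.1]{Schwede_A_canonical_linear_system} uses that $y$ is perfect so that the absolute and relative Frobenii of $X_y$ coincide up to the perfect base, exactly as in the second remark after \autoref{thm:subadditivity_of_Kodaira_dimension}; this is the reason the statement is phrased in terms of perfect points.
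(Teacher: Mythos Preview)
There is a genuine gap in the central base-change step. You assert that ``the fiber at a perfect point $y$ of the chain \eqref{eq:we_indeed_define_a_Cartier_module:stream} is exactly the corresponding chain for the fiber $F = X_y$,'' but this is false as stated. The chain \eqref{eq:we_indeed_define_a_Cartier_module:stream} consists of maps $F^{ge}_{Y,*} f_* M \to f_* M$ built from the \emph{absolute} Frobenius of $Y$, and absolute Frobenius pushforward does not commute with restriction to a point: for a closed point $y$ one has $(F^{ge}_{Y,*} f_* M)\otimes k(y)\cong (f_* M)_y/\mathfrak{m}_y^{[p^{ge}]}(f_* M)_y$, which for $\dim Y>0$ is strictly larger than $H^0(X_y,M_y)\cong (f_*M)_y/\mathfrak{m}_y(f_*M)_y$. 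So the source of your base-changed map is not $H^0(X_y,F^{ge}_{X_y,*}M_y)$, and the images you produce are not the fiberwise images $S_{\Delta,ge}(X_y,M_y)$. Perfectness of $y$ does not help here; it only identifies $y$ with $y^{ge}$, it does not make $F_{Y,*}$ commute with the fiber functor.

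What the paper does instead is precisely to replace the absolute Frobenius by the \emph{relative} one: the sheaves $S_{\Delta,ge} f_* M$ of \eqref{eq:S_0_definition} are images of $f_{Y^{ge},*}F_{X^{ge}/Y^{ge},*}M \to f_{Y^{ge},*}M_{Y^{ge}}$ and live inside $F^{ge,*}_Y f_* M$ on $Y^{ge}$, not inside $f_* M$. Because the relative Frobenius is a $Y$-morphism, these maps \emph{do} base-change to the fiberwise Frobenius maps, yielding the comparison diagram \eqref{eq:base_change_S_0}. The stabilization one then needs is not \autoref{prop:Cartier_module_stabilizes} (a statement about $F_*$-chains) but the separate statement that $F^{g,*}_{V^{ge}}(S_{\Delta,ge}f_*M|_{V^{ge}}) = S_{\Delta,g(e+1)}f_*M|_{V^{g(e+1)}}$ for all $e\gg 0$ over a fixed open $V$; this is \autoref{prop:stabilization}, and its proof is an honest induction on $e$ (showing the open set does not shrink), not a consequence of Gabber's finiteness. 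Your proposal conflates these two stabilizations and the two Frobenii; the relative-Frobenius formalism (and the attendant machinery from \cite{Patakfalvi_Schwede_Zhang_F_singularities_in_families}) is the missing idea.
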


In the above statement is implicitly included that $\Delta_F$ is meaningful for every fiber $W$ over every perfect point of $W$. 
The following lemma states this and some other similar reductions. The proof is immediate.

\begin{lemma}
\label{lem:openness_S_0_assumptions}
In the situation of \autoref{notation:openness_S_0}, there is a non-empty open set $U \subseteq Y$, such that 
\begin{enumerate}
\item $U$ is affine, 
\item $f|_{f^{-1}(U)}$ has geometrically integral fibers,
\item $f|_{f^{-1}(U)}$ is flat and relatively $S_2$ and $G_1$,
\item $\Delta|_{f^{-1}(U)}$ is  effective and contains no fiber,
\item \label{itm:openness_S_0_assumptions:linear_equivalence} $(1-p^g)\left(K_{f^{-1}(U)/U} +  \Delta|_{f^{-1}(U)} \right) \sim (1-p^g) M|_{f^{-1}(U)}$.
\end{enumerate}
\end{lemma}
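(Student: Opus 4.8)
The plan is a routine spreading-out argument. Each of the five properties is either an open condition on $Y$ once $f$ is known to be flat, or is witnessed by finitely many objects defined over the generic point $\xi$ (divisorial components, a rational function), so each holds over a dense open subset of $Y$; I would produce these finitely many dense opens one at a time, intersect them, and — since $Y$ is a variety — replace the intersection by an affine open $U$ inside it, which takes care of condition (1).

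To begin, generic flatness gives a dense open $U_1 \subseteq Y$ over which $f$ is flat, and since $f$ is of finite type the restriction $f|_{f^{-1}(U_1)}$ is then of finite presentation. Because $X_\xi$ is geometrically integral by the first hypothesis of \autoref{notation:openness_S_0}, the constructibility of this property in finite-presentation flat families produces a dense open $U_2 \subseteq U_1$ over which every fiber is geometrically integral. Likewise, $X_\xi$ being $S_2$ and $G_1$, the loci of $y \in U_1$ over which $X_y$ is $S_2$, respectively Gorenstein in codimension $\le 1$, are open and contain $\xi$, hence contain a dense open $U_3$; since $Y$ is regular and $f$ is flat over $U_3$, the total space $f^{-1}(U_3)$ is then $S_2$ and $G_1$ as well, which is all that is needed for condition (3).

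For condition (4), decompose $\Supp \Delta$ into its horizontal part $H$ (the union of those prime components dominating $Y$) and its vertical part $V$ (the finitely many remaining components); each component of $V$ has image a proper closed subset of $Y$, so after deleting these finitely many images I get a dense open $U_4$ over which $\Delta|_{f^{-1}(U_4)}$ is supported on $H$. Over $U_4$ the coefficient of $\Delta|_{f^{-1}(U_4)}$ along a component of $H$ equals that of the corresponding component of $\Delta_\xi$, which is $\ge 0$ by the effectivity hypothesis, so $\Delta|_{f^{-1}(U_4)} \ge 0$; and each component of $H$ has relative dimension $\dim X - 1 - \dim Y < \dim X - \dim Y$ over $Y$, so after a further shrinking to a dense open $U_5$ it meets every (integral) fiber in a proper closed subset, i.e.\ $\Delta|_{f^{-1}(U_5)}$ contains no fiber.

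Finally, over the flat, relatively $S_2$ and $G_1$ locus the relative dualizing sheaf $\omega_{X/Y}$ exists and its formation commutes with base change in codimension $\le 1$, so that $K_{X/Y}$ restricts to $K_{X_\xi}$ on the generic fiber and, after one more shrinking, $(p^g-1)(K_{X/Y} + \Delta)$ is Cartier (it is so on $X_\xi$ by the standing assumption of \autoref{notation:openness_S_0}). Picking a rational function $h \in k(X)^\times$ realizing the generic-fiber linear equivalence $(1-p^g)(K_{X_\xi}+\Delta_\xi) \sim (1-p^g)M_\xi$, the Cartier divisor $D := (1-p^g)(K_{X/Y}+\Delta) - (1-p^g)M - \opdiv(h)$ on $f^{-1}(U_5)$ restricts to $0$ on $X_\xi$, hence is vertical, hence supported over a proper closed subset of $Y$; removing that subset yields a dense open $U_6$ with $D = 0$, which is condition (5). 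Taking an affine open $U \subseteq U_1 \cap \dots \cap U_6$ concludes the argument. I do not expect any real obstacle here — this is exactly why the lemma can be called immediate; the only point deserving a moment's attention is the compatibility used in the last paragraph, namely that on the flat, $S_2$, $G_1$ locus the relative canonical restricts correctly to the generic fiber and the $\bQ$-Cartier index is preserved, so that the linear equivalence of \autoref{notation:openness_S_0} genuinely spreads out.
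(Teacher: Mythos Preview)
Your argument is correct and follows the same spreading-out strategy as the paper: generic flatness, then openness of the fiberwise properties (geometric integrality, $S_2$, $G_1$), then shrinking to handle $\Delta$ and the linear equivalence. The only substantive difference is in condition \eqref{itm:openness_S_0_assumptions:linear_equivalence}: the paper observes that $f_*\sO_X\big((1-p^g)(K_{X/Y}+\Delta-M)\big)$ has rank one by cohomology and base change and then restricts to its locally free locus, whereas you choose a rational function $h$ realizing the generic-fiber equivalence and remove the vertical support of the resulting Cartier divisor. These are equivalent ways of spreading out a trivialization of a line bundle from the generic fiber; your formulation is perhaps a shade more elementary since it avoids invoking cohomology and base change, while the paper's phrasing packages the same idea into a single sheaf-theoretic sentence.
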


\begin{proof}
Use \cite[Thm 6.9.1]{Grothendieck_Elements_de_geometrie_algebrique_IV_II} and 
\cite[Thm 12.2.4]{Grothendieck_Elements_de_geometrie_algebrique_IV_III} for flatness, geometric integrality and the $S_2$ property. Since $Y$ is regular, a fiber is Gorenstein at a point $P$ if and only if so is the total space at $P$. Now use the openness of the Gorenstein locus to prove the $G_1$ property. To prove \autoref{itm:openness_S_0_assumptions:linear_equivalence} of \autoref{lem:openness_S_0_assumptions}, note that by  \autoref{itm:openness_S_0:linear_equivalence} of \autoref{thm:openness_S_0} and cohomology and base-change, $f_* \sO_X((1-p^g)(K_{X/Y} +   \Delta - M))$ has rank 1, and hence one just have to choose a $U$ contained in the locally free locus of $f_* \sO_X((1-p^g)(K_{X/Y} +   \Delta - M))$. 
\end{proof}

Since our aim is to prove \autoref{thm:openness_S_0}, by \autoref{lem:openness_S_0_assumptions}, after replacing $Y$ by one of its open sets, we may assume the properties listed in \autoref{lem:openness_S_0_assumptions} for $U=Y$. 
In particular, then $ f : X \to Y$ and $\Delta$ almost satisfy \cite[Notation 2.1, Definition 2.11 and Notation 6.1]{Patakfalvi_Schwede_Zhang_F_singularities_in_families} except the following two conditions:

 First, in \cite[Notation 6.1]{Patakfalvi_Schwede_Zhang_F_singularities_in_families} projectivity is assumed. However, really only properness is used in the proofs referenced here. Hence this should not cause any problem.

 More importantly, in \cite[Definition 2.11]{Patakfalvi_Schwede_Zhang_F_singularities_in_families} it is assumed that $\Delta$ avoids the codimension one points of the fibers that are not in the smooth locus of $f$. We do not assume this for our $\Delta$. However, we do assume that $\Delta$ is $\bQ$-Cartier, which only means in our case that $\Delta$ is formal sum of Cartier divisors with rational coefficients (see \autoref{sec:notation}). 

Luckily the setting of \cite{Patakfalvi_Schwede_Zhang_F_singularities_in_families} is really more general than that of \cite[Definition 2.11]{Patakfalvi_Schwede_Zhang_F_singularities_in_families}. The latter definition is just one way to obtain a line bundle $L$ and a homomorphism $\varphi : L^{1/p^g} \to R_{A^{1/p^g}}$ as in \cite[Definition 2.6]{Patakfalvi_Schwede_Zhang_F_singularities_in_families} (note that in \cite{Patakfalvi_Schwede_Zhang_F_singularities_in_families}, $e$ is used instead of $g$). The construction of \cite[Remark 2.10]{Patakfalvi_Schwede_Zhang_F_singularities_in_families} yields a $\varphi$ for $\bQ$-Cartier $\Delta$ even if $\Delta$ goes through non-smooth codimension one points of the fibers. One just uses the construction of \cite[Remark 2.10]{Patakfalvi_Schwede_Zhang_F_singularities_in_families}  and disregards condition $(c)$ that $D$ avoids all the non-smooth codimension one points of the fibers. Denote by $\varphi_{\Delta}$ the obtained homomorphism. Of course this way one looses the equivalence of the different choices of the effective $\bQ$-Cartier divisors $\Delta$ and of the 
relatively divisorial homomorphisms $\varphi$ as explained in \cite[Remark 2.10]{Patakfalvi_Schwede_Zhang_F_singularities_in_families}. However, this is equivalence is not necessary to apply the results of \cite{Patakfalvi_Schwede_Zhang_F_singularities_in_families} to the above obtained homomorphism  $\varphi_\Delta : L^{1/p^g} \to R_{A^{1/p^g}}$.

On the other hand to translate the results obtained for $\varphi_\Delta$ back to the divisorial language, we do have to check the compatibility of this $\varphi_\Delta$ construction with base-change, and also the independence  of $\varphi_\Delta$ from the choice of $g$. These are worked out in \cite[Lemma 2.14, Lemma 2.20 and Lemma 2.23]{Patakfalvi_Schwede_Zhang_F_singularities_in_families}. Again the statements are unfortunately worded slightly inadequately for our setup. So, for example in \cite[Lemma 2.14]{Patakfalvi_Schwede_Zhang_F_singularities_in_families}, instead of proving that $\Delta_\varphi= \Delta_\varphi^n$, we should prove that $\varphi_\Delta$ becomes $(\varphi_\Delta)^n$ if we multiply the $g$ that we are using in the construction of $\varphi_\Delta$ by $n$ (simply because as explained above $\Delta_\varphi$ does not make sense in our setup). On the other hand the proof works with almost no change. In \cite[Lemma 2.20]{Patakfalvi_Schwede_Zhang_F_singularities_in_families} the correct 
statement is that $(\varphi_{\Delta})_T= \varphi_{\Delta_T}$ and in \cite[Lemma 2.23]{Patakfalvi_Schwede_Zhang_F_singularities_in_families} it is 
that $\varphi_{\Delta}|_{X_s}$ can be identified with the usual map $\psi_{\Delta_s} : L_s^{1/p^g} \to R_s$ given by $\Delta_s$. The proofs again work with almost no change. In fact the latter one is basically immediate. For the former one, i.e., for $(\varphi_{\Delta})_T= \varphi_{\Delta_T}$, there is one non-trivial input that the Grothendieck trace of the relative Frobenius morphism is compatible with base-change \cite[Lemma 2.17]{Patakfalvi_Schwede_Zhang_F_singularities_in_families}. 

Hence we see that the $\Delta \mapsto \varphi_\Delta$ is indeed compatible with base-change if $\Delta$ is a $\bQ$-Cartier divisor, even if we allow $\Delta$ to go through singular codimension one points of the fibers. Therefore we may use every result of \cite{Patakfalvi_Schwede_Zhang_F_singularities_in_families}, which uses only the homomorphism (i.e., the $\varphi_\Delta$) language. This is true for almost all statements of that article. Unfortunately we are also going to use one of the few statements \cite[Proposition 6.33]{Patakfalvi_Schwede_Zhang_F_singularities_in_families} that is not phrased using the homomorphism language. However, that proof works verbatim in our situation.

Having discussed the necessary adjustments one has to do to \cite{Patakfalvi_Schwede_Zhang_F_singularities_in_families} to apply it in our case, let us start with the setup for proving \autoref{thm:openness_S_0}.  In the situation of \autoref{notation:openness_S_0} with the assumptions made after \autoref{lem:openness_S_0_assumptions}, consider the following sheaf introduced in  \cite[Definition 6.4]{Patakfalvi_Schwede_Zhang_F_singularities_in_families} (there $e$ is used instead of $g$ and $n$ instead of $e$).
\begin{equation}
\label{eq:S_0_definition}
 S_{\Delta,ge} f_* M = \im \left( f_{Y^{ge},*} F_{X^{ge}/Y^{ge},*} \sO_X \left( \left( 1-p^{ge} \right)(K_{X/Y} + \Delta) + p^{ge} M\right)  \to  f_{Y^{ge},*} M_{Y^{ge}} \right)
\end{equation}
where:
\begin{itemize}
\item $X^{ge}$ and $Y^{ge}$ are the source spaces of the $ge$-times iterated absolute Frobenius morphism of $X$ and $Y$, respectively. In particular, $X^{ge}$ and $Y^{ge}$ are identical to  $X$ and $Y$ as schemes. However the $k$ structure on $X^{ge}$ and $Y^{ge}$ is given by the respective $k$-structures twisted  via the map $k \to k$ given by $x \mapsto x^{p^{ge}}$.
\item Writing $\sO_X \left( \left( 1-p^{ge} \right)(K_{X/Y} + \Delta) + p^{ge} M\right)$ is a slight abuse of notation. We should really write:
\begin{equation*}
\sO_{X^{ge}} \left( \left( 1-p^{ge} \right)(K_{X^{ge}/Y^{ge}} + \Delta_{ge}) + p^{ge} M_{ge}\right),
\end{equation*}
where $\Delta_{ge}$ and $M_{ge}$ are just the divisors that become  $\Delta$ and $M$ when we forget the $k$ structure of $X^{ge}$ and regard it as an abstract scheme identical to $X$. 
\item $F_{X^{ge}/Y^{ge}} : X^{ge} \to X_{Y^{ge}}$ is the relative Frobenius morphism, which fits into the following commutative diagram,
\begin{equation*}
\xymatrix{
X^{ge} \ar@/_1pc/[ddr]_{f^{ge}}   \ar[dr]|{F_{X^{ge}/Y^{ge}}} \ar@/^1pc/[drr]^{F_X^{ge}} \\
& X_{Y^{ge}} \ar[r]_{\left(F^{ge}_Y\right)_X} \ar[d]^{f_{Y^{ge}}} & X \ar[d]^f \\ 
& Y^{ge} \ar[r]_{F^{ge}_Y} & Y
}
\end{equation*}
\item We have
\begin{equation*}
\qquad \qquad F_{X^{ge}/Y^{ge},*} \sO_X \left( \left( 1-p^{ge} \right)(K_{X/Y} + \Delta) + p^{ge}M \right) 
\\ \cong
M_{Y^{ge}} \otimes F_{X^{ge}/Y^{ge},*}  \sO_X \left( \left( 1-p^{ge} \right)(K_{X/Y} + \Delta) \right)  
\end{equation*}
So, the homomorphism in \autoref{eq:S_0_definition}, is given by tensoring $\id_{M_{Y^{ge}}}$ with the trace homomorphism  obtained from Grothendieck duality by tracing the image of the section $s_\Delta$ corresponding to $(p^{ge} -1)\Delta$  through the following stream of isomorphisms. 
\begin{multline*}
\qquad \qquad \Hom_{X_{Y^{ge}}} \left( F_{X^{ge}/Y^{ge},*} \sO_X \left( \left( 1-p^{ge} \right)(K_{X/Y} + \Delta) \right), \sO_{X_{Y^{ge}}} \right) 
\\ %
\begin{split}
& \cong \Hom_{X} \left( \sO_X \left( \left( 1-p^{ge} \right)(K_{X/Y} + \Delta) \right), F_{X^{ge}/Y^{ge}}^! \sO_{X_{Y^{ge}}} \right) 
\\ & \cong \Hom_{X} \left( \sO_X \left( \left( 1-p^{ge} \right)(K_{X/Y} + \Delta) \right),  \sO_X \left( \left( 1-p^{ge} \right)K_{X/Y}  \right) \right) 
\\ & \cong H^0(X, (p^{ge} -1) \Delta) \ni  s_\Delta 
\end{split}.
\end{multline*}
Note that this also equals the composition of the natural embedding
\begin{equation*}
F_{X^{ge}/Y^{ge},*}  \sO_X \left( \left( 1-p^{ge} \right)(K_{X/Y} + \Delta) \right)   \to F_{X^{ge}/Y^{ge},*}  \sO_X \left( \left( 1-p^{ge} \right)K_{X/Y}  \right)  
\end{equation*}
composed with the trace 
\begin{equation*}
F_{X^{ge}/Y^{ge},*}  \sO_X \left( \left( 1-p^{ge} \right)K_{X/Y}  \right)  \to \sO_{X_{Y^{ge}}}
\end{equation*}
of the relative Forbenius $F_{X^{ge}/Y^{ge}}$.
\item Note that $S_{\Delta, ge} f_* M$ contrary to the notation is not a sheaf on $Y$, but it is a sheaf on $Y^{ge}$. Furthermore, it is a subsheaf of $f_{Y^{ge},*} M_{Y^{ge}} \cong F_Y^{ge,*} f_* M$ (recall, $Y$ is regular, so $F_Y$ is flat).
\item Further note that 
\begin{equation*}
\left( 1-p^{ge} \right)(K_{X/Y} + \Delta) + p^{ge}M \sim M ,
\end{equation*}
so in \autoref{eq:S_0_definition} we really have a map $f_{Y^{ge},*} F_{X^{ge}/Y^{ge},*} M  \to  f_{Y^{ge},*} M_{Y^{ge}}$.
\end{itemize}
Also note that \cite{Patakfalvi_Schwede_Zhang_F_singularities_in_families} uses a slightly different notation which is more suitable for dealing with relative Frobenii. In fact, we will also be forced to use that notation later in the section. Matching up the two notations, i.e., to see that \autoref{eq:S_0_definition} is equivalent to \cite[Definition 6.4]{Patakfalvi_Schwede_Zhang_F_singularities_in_families} should be just a matter of familiarity with both notations.

Now, if $y \in Y$ is a perfect point (i.e. a map $\Spec K \to Y$ for some perfect field $K$), then there is a base-change diagram \cite[equation (6.16.4)]{Patakfalvi_Schwede_Zhang_F_singularities_in_families}
\begin{equation}
\label{eq:base_change_S_0}
\xymatrix{
f_{Y^{ge},*} F_{X^{ge}/Y^{ge},*} M \otimes_{\sO_{Y^{ge}}} k(y^{ge}) \ar[d] \ar[r] &   f_{Y^{ge},*} M_{Y^{ge}} \otimes_{\sO_{Y^{ge}}} k(y^{ge}) \ar[d] \\
 H^0(X_y, F_{X_y,*}^{ge} M_y ) \ar[r] & H^0(X_y,  M_y) 
},
\end{equation}
where $y^{ge}$ is the $p^{ge}$ twisted version of $y$, which is isomorphic to $y$ by the perfectness assumption. (Note that the vertical arrows are just the usual base-change maps from cohomology and base-change.) Here the image of $f_{Y^{ge},*} F_{X^{ge}/Y^{ge},*} M \to   f_{Y^{ge},*} M_{Y^{ge}}$ is $S_{\Delta, ge} f_* M$, while of the bottom horizontal arrow is $S^0(X_y, \sigma(X,\Delta) \otimes M_y)$ for $e \gg 0$. Let us denote by $S_{\Delta,ge} (X_y, M_y)$ the image of the latter for a given $e$. Note that diagram \autoref{eq:base_change_S_0} is the very reason for the introduction of $S_{\Delta,ge} f_* M$. It is the only known sheaf that base-changes to $S^0(X_y, \sigma(X,\Delta) \otimes M_y)$ in some sense.

Let $U \subseteq Y$ be the (non-empty) open set where $H^0(X_y,M_y)$ is constant. Now, notice that 
\begin{equation*}
H^0(X_y, M_y) \cong H^0 \left(X_y, F_{X_y,*}^{ge} M_y \right) \cong H^0 \left(X_{y^{ge}}, M_{y^{ge}} \right) \cong H^0\left(X_{y^{ge}}, \left(M_{Y^{ge}}\right)_{y^{ge}}\right).
\end{equation*}
Furthermore, both $f_{Y^{ge}}$ and $f_{Y^{ge}} \circ F_{X^{ge}/Y^{ge}} : X^{ge} \to Y^{ge}$ are flat. Hence for any perfect point $y \in U$, the vertical base-change maps of \eqref{eq:base_change_S_0} are isomorphisms \cite[Corollary III.12.9]{Hartshorne_Algebraic_geometry}. In particular, for every perfect point $y \in U$, the natural map
\begin{equation}
\label{eq:base_change_S_0_result}
S_{\Delta, ge} f_* M \otimes_{Y^{ge}} k(y^{ge}) \to S_{\Delta, ge} (X_y, M_y) .
\end{equation}
is surjective, and further it is an isomorphism, whenever 
\begin{equation}
\label{eq:base_change_S_0_condition}
S_{\Delta, ge} f_* M \otimes_{Y^{ge}} k(y^{ge}) \to f_{Y^{ge},*} M_{Y^{ge}} \otimes_{Y^{ge}} k(y^{ge})
\end{equation}
is injective.
The following proposition is the main ingredient of our discussion.

\begin{proposition}
\label{prop:stabilization}
In the above situation, there is a non-empty Zariski open set $V \subseteq U$, such that for every $e \gg 0$, 
\begin{equation*}
F_{V^{ge}}^{g,*} \left( S_{\Delta,ge} f_* M|_{V^{ge}} \right)  = S_{\Delta, g(e+1)} f_* M |_{V^{g(e+1)}} 
\end{equation*}
as subsheaves of $F_V^{g(e+1),*} f_* M$. 
\end{proposition}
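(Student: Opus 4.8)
The plan is to reduce the statement, after the reductions already made, to \cite[Proposition 6.33]{Patakfalvi_Schwede_Zhang_F_singularities_in_families}: as explained above, although that proposition is phrased in the homomorphism language of loc. cit., its proof applies verbatim to the pair $(f,\varphi_\Delta)$ here. Below I sketch the shape of that argument.

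First I would shrink $Y$ once more. Keeping the conclusions of \autoref{lem:openness_S_0_assumptions}, I may assume $h^0:=\dim_{k(y)}H^0(X_y,M_y)$ is independent of $y$; since $\dim_{k(y)}H^0(X_{y^{ge}},M_{y^{ge}})=h^0$ as well for every $e$, and $f_{Y^{ge}}$ and $f_{Y^{ge}}\circ F_{X^{ge}/Y^{ge}}$ are flat and proper, cohomology and base change \cite[Corollary III.12.9]{Hartshorne_Algebraic_geometry} shows all the sheaves $f_{Y^{ge},*}M_{Y^{ge}}$ and $f_{Y^{ge},*}F_{X^{ge}/Y^{ge},*}M$ are locally free. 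In particular $S_{\Delta,ge}f_*M$ is the image of a morphism of vector bundles, hence torsion-free, and the canonical inclusion $S_{\Delta,g(e+1)}f_*M\hookrightarrow F_{Y^{ge}}^{g,*}\bigl(S_{\Delta,ge}f_*M\bigr)$ — which exists because the defining homomorphism at level $g(e+1)$ factors through the $F_{Y^{ge}}^{g,*}$ of the one at level $ge$ and $F_Y^g$ is flat — forces the generic ranks $r_{ge}:=\rk S_{\Delta,ge}f_*M$ to form a non-increasing sequence of non-negative integers. Thus $r_{ge}=r$ for all $e\geq e_0$, for some fixed $e_0$.

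Next I would show the chain has already stabilized at the generic point $\xi$ of $Y$ once $e\geq e_0$. Localizing the subsheaf inclusion $S_{\Delta,ge}f_*M\hookrightarrow f_{Y^{ge},*}M_{Y^{ge}}$ at $\xi$ is exact, and tensoring the resulting inclusion of $K(Y)$-vector spaces with the field extension $k(\eta)/K(Y)$ preserves injectivity; so \eqref{eq:base_change_S_0_condition} is injective at the perfect point $\eta$ lying over $\xi$, whence by the discussion preceding the proposition $S_{\Delta,ge}f_*M\otimes k(\eta^{ge})\xrightarrow{\sim}S_{\Delta,ge}(X_\eta,M_\eta)$. For $e\geq e_0$ the source has dimension $r$, so the descending chain $\dotsb\subseteq S_{\Delta,g(e+1)}(X_\eta,M_\eta)\subseteq S_{\Delta,ge}(X_\eta,M_\eta)\subseteq\dotsb$ of subspaces of $H^0(X_\eta,M_\eta)$ has constant dimension $r$, hence is constant — its stable value being $S^0(X_\eta,\sigma(X_\eta,\Delta_\eta)\otimes M_\eta)$ by \autoref{prop:Cartier_module_stabilizes} (applicable since $X_\eta$ is Noetherian and $F$-finite, $k(\eta)$ being perfect). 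Translating back, $S_{\Delta,g(e+1)}f_*M$ and $F_{Y^{ge}}^{g,*}\bigl(S_{\Delta,ge}f_*M\bigr)$ agree at $\xi$ for every $e\geq e_0$; equivalently, the torsion quotient $Q_e$ of the inclusion above is supported on a proper closed subset $Z_e\subsetneq Y$ (identifying $Y^{g(e+1)}$ with $Y$ via the canonical homeomorphism).

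The main obstacle is \emph{uniformity}: producing a single non-empty open $V\subseteq Y$ disjoint from every $Z_e$, $e\geq e_0$, rather than an $e$-dependent one — this is precisely what the proof of \cite[Proposition 6.33]{Patakfalvi_Schwede_Zhang_F_singularities_in_families} supplies, and it is the one place in this section where that proof, not merely its statement, must be invoked. Its mechanism is to propagate the generic-point information to a neighbourhood: on the open locus $V_0\subseteq Y$ where the vector-bundle map defining $S_{\Delta,ge_0}f_*M$ attains its generic rank $r$, every perfect fibre has $\dim S_{\Delta,ge_0}(X_y,M_y)=r$, while \eqref{eq:base_change_S_0_result} gives $\dim S_{\Delta,ge}(X_y,M_y)\leq r$ for all $e\geq e_0$; combining this with the stabilization of the Cartier module on each $X_y$ (\autoref{prop:Cartier_module_stabilizes}) and a semicontinuity argument forcing $\dim S^0(X_y,\sigma(X_y,\Delta_y)\otimes M_y)$ to equal $r$ on a non-empty open $V\subseteq V_0$, one concludes that over $V$ the fibrewise chain $\{S_{\Delta,ge}(X_y,M_y)\}_e$ is constant from step $e_0$ on. A final application of the base-change square \eqref{eq:base_change_S_0} — now an isomorphism over $V$, the relevant fibre dimensions matching the sheaf ranks — transports this to $F_{V^{ge}}^{g,*}\bigl(S_{\Delta,ge}f_*M|_{V^{ge}}\bigr)=S_{\Delta,g(e+1)}f_*M|_{V^{g(e+1)}}$ for all $e\geq e_0$; intersecting $V$ with the open set used for the local-freeness reductions gives the required Zariski open set.
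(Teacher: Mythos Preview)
Your first two paragraphs---rank stabilization via the descending chain, and equality at the perfect generic point---are correct and match the paper. The gap is in the third paragraph, and it is twofold.

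First, your appeal to \cite[Proposition 6.33]{Patakfalvi_Schwede_Zhang_F_singularities_in_families} is misplaced. In the paper that result is used only for the \emph{addendum} of \autoref{thm:openness_S_0} (comparing the rank of $S^0 f_*$ with the general fibrewise value); it does not furnish the uniform open set you need here. More seriously, the ``mechanism'' you sketch for it is circular: you invoke a semicontinuity argument to force $\dim S^0(X_y,\sigma(X_y,\Delta_y)\otimes M_y)=r$ on an open $V$, but no lower semicontinuity of fibrewise $S^0$ is available a priori---that constancy is exactly what \autoref{thm:openness_S_0} establishes \emph{using} the present proposition. Second, even granting that the fibrewise chain $\{S_{\Delta,ge}(X_y,M_y)\}_e$ is constant from step $e_0$ over $V$, you cannot transport this back to the sheaf equality via \eqref{eq:base_change_S_0}: for that square to be an isomorphism at level $e$ you need the cokernel of $S_{\Delta,ge}f_*M$ in $f_{Y^{ge},*}M_{Y^{ge}}$ to be locally free at $y$, and you have only arranged this at level $e_0$. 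Knowing $\dim S_{\Delta,ge}(X_y,M_y)=r$ tells you only that the \emph{image} of $S_{\Delta,ge}f_*M\otimes k(y)$ in $f_{Y^{ge},*}M_{Y^{ge}}\otimes k(y)$ is $r$-dimensional, not that the source is.

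The paper avoids all fibrewise reasoning for the inductive step and argues directly with sheaves. After obtaining one $e_0$ and one open $V$ where equality holds (exactly as you do), it proves by induction that the \emph{same} $V$ works for all $e\geq e_0$. The point is the factorization $\varphi^{e+2}=\varphi\circ(\varphi^{e+1})^{1/p^g}$ together with $\varphi^{e+1}=\varphi\circ(\varphi^{e})^{1/p^g}$: after tensoring with $M$, pushing forward, and using flatness of $F_Y$ to commute $F_*$ past $f_*$ and $\otimes_A A^{1/p^g}$, one gets a commutative diagram in which the image of the level-$(e+2)$ map and the Frobenius pullback of the image of the level-$(e+1)$ map are both computed as $\varepsilon\circ(\text{something})$, where the two ``somethings'' have the same image over $V$ by the inductive hypothesis. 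This is the missing idea: propagate the equality forward by a diagram chase, rather than trying to recover it from fibres.
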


\begin{proof}
Consider the above base-change discussion for $y$ being the prefect closure of the general point of $Y$ (or equivalently of $U$). Then \autoref{eq:base_change_S_0_condition} is isomorphism and hence \autoref{eq:base_change_S_0_result} is an isomorphism as well. Using this and the fact that for every $e \gg 0$
\begin{equation*}
S_{\Delta,ge} (X_y, M_y) = S^0(X_y, M_y),
\end{equation*}
we see that $S_{\Delta,ge} f_* M \otimes_{Y^{ge}} k(y^{ge}) $ is the same for every $e \gg 0$. In other words the rank of $S_{\Delta,ge} f_* M$ stabilizes for $e \gg 0$. Further, by \cite[Proposition 6.6]{Patakfalvi_Schwede_Zhang_F_singularities_in_families}, for any $ e \geq 0$, 
\begin{equation*}
F_{V^{ge}}^{g,*} \left( S_{\Delta,ge} f_* M|_{V^{ge}} \right) \supseteq S_{\Delta,g(e+1)} f_* M|_{V^{g(e+1)}}.
\end{equation*}
as subsheaves of $f_{V^{g(e+1)},*} (M_{V^{g(e+1)}}) \cong F^{g(e+1),*}_V f_* M$.
Hence there is an integer $e>0$, and an open set $V \subseteq U$, such that 
\begin{equation}
\label{eq:stabilization:V}
F_{V^{ge}}^{g,*} \left( S_{\Delta,ge} f_* M|_{V^{ge}} \right) \cong S_{\Delta,g(e+1)} f_* M|_{V^{g(e+1)}}.
\end{equation}
The reason why we are not ready is that $V$ can be different for different values of $e$. We need to show that the $V$ found for a fixed $e=e_0$ works for all $e>e_0$. We prove this by induction on $e$. So, assume that \autoref{eq:stabilization:V} holds for some $e$ and $V$. We are going to prove that it also holds for $e+1$ with the same $V$. However before proceeding we need to change to the notation of \cite{Patakfalvi_Schwede_Zhang_F_singularities_in_families}, since working out every detail of the remaining part of the proof is very tedious without doing so. 

Note that $X^l_{Y^r}$ has the same underlying topological space  as $X$ for every integer $r \geq l \geq 0$. Further, since we assumed that $Y$ is affine, we really have to work only with one topological space $X$, and keep track of the different sheaves of algebras on it. In accordance with \cite[Notation 2.1, Definition 2.6]{Patakfalvi_Schwede_Zhang_F_singularities_in_families} introduce $R:=\sO_X$, $A:=H^0(Y, \sO_Y)$ and $L:=\sO_X((1-p^g)(K_X + \Delta))$. In this notation for example $\sO_{X^{ge}}$ becomes $R^{1/p^{ge}}$ or $\sO_{X_{Y^{ge}}}$ becomes $R \otimes_A A^{1/p^{ge}}$. The usefulness of this notation is apparent for example when one considers isomorphisms of the form
\begin{equation*}
\left(R \otimes_A A^{1/p^{ge}} \right)^{1/p^{ge'}} \cong R^{1/p^{ge'}} \otimes_{A^{1/p^{ge'}}} A^{1/p^{g(e+e')}}.
\end{equation*}
Similarly if $M$ is considered on $X^{ge}$ instead of $X$, which has been denoted by $M_{ge}$ so far, then we write $M^{1/p^{ge}}$ in the rest of the proof.
Further, since the pushforward does not depend on the algebra structures (the algebra structure just influences what module structure we endow the otherwise identical pushforwards with), we can use only $f_*$ instead of the functors of the form $f_{Y^l,*}$. As in \cite{Patakfalvi_Schwede_Zhang_F_singularities_in_families} we denote by $\varphi$ the homomorphism 
\begin{equation*}
F_{X^g/Y^g,*} \sO_X((1-p^g)(K_X + \Delta)) = L^{1/p^g} \to R \otimes_A A^{1/p^g} = \sO_{X_{Y^g}}
\end{equation*}
introduced above. Note that then the other homomorphisms 
\begin{equation*}
F_{X^{ge}/Y^{ge},*} \sO_X((1-p^{ge})(K_X + \Delta)) = \left( L^\frac{p^{ge}-1}{p^g -1} \right)^{1/p^{ge}} \to R \otimes_A A^{1/p^{ge}} = \sO_{X_{Y^{ge}}}
\end{equation*}
which were also introduced above and are denoted by $\varphi^e$ in \cite{Patakfalvi_Schwede_Zhang_F_singularities_in_families} fit into a commutative diagram as follows \cite[2.12, Lemma 2.14, proof of Proposition 3.3]{Patakfalvi_Schwede_Depth_of_F_singularities}.
\begin{equation*}
\xymatrix@R=40pt{
\ar@/_12pc/[dd]|(0.7){ \left( \varphi^{e+1} \otimes_R L \right)^\frac{1}{p^g} } \left( L^\frac{p^{g(e+2)}-1}{p^g -1} \right)^\frac{1}{p^{g(e+2)}}  \ar@/_18pc/[ddd]|(0.8){\varphi^{e+2}} \\
\left( L^\frac{p^{g(e+1)}-1}{p^g -1} \right)^\frac{1}{p^{g(e+1)}} \otimes_{A^\frac{1}{p^{g(e+1)}}} A^\frac{1}{p^{g(e+2)}} 
\ar[d]^{ \left(\varphi^e \otimes_R L \right)^\frac{1}{p^g} \otimes_{A^\frac{1}{p^{g(e+1)}}} A^\frac{1}{p^{g(e+2)}}} 
\ar@/^18pc/[dd]|(0.8){\varphi^{e+1} \otimes_{A^\frac{1}{p^{g(e+1)}}} A^\frac{1}{p^{g(e+2)}}} \\
 L^\frac{1}{p^{g}} \otimes_{A^\frac{1}{p^{g}}} A^\frac{1}{p^{g(e+2)}} \ar[d]_{ \varphi \otimes_{A^\frac{1}{p^g}} A^\frac{1}{p^{g(e+2)}} } \\
R \otimes_A A^\frac{1}{p^{g(e+2)}}  \\
}
\end{equation*}
Apply now $\_ \otimes_R M$ to the above diagram. This yields the following commutative diagram using the projection formula at multiple places.
\begin{equation*}
\xymatrix@R=40pt{
\ar@/_12pc/[dd]|(0.7){ \left( \varphi^{e+1} \otimes_R L \right)^\frac{1}{p^g} \otimes_R M} \left( L^\frac{p^{g(e+2)}-1}{p^g -1} \otimes_R M^{p^{g(e+2)}} \right)^\frac{1}{p^{g(e+2)}}  \ar@/_18pc/[ddd]|(0.8){\varphi^{e+2}\otimes_R M} \\
\left( L^\frac{p^{g(e+1)}-1}{p^g -1} \otimes_R M^{p^{g(e+1)}} \right)^\frac{1}{p^{g(e+1)}} \otimes_{A^\frac{1}{p^{g(e+1)}}} A^\frac{1}{p^{g(e+2)}} 
\ar[d]^{ \left(\varphi^e  \otimes_R L \right)^\frac{1}{p^g} \otimes_{A^\frac{1}{p^{g(e+1)}}} A^\frac{1}{p^{g(e+2)}}\otimes_R M } 
\ar@/^18pc/[dd]|(0.8){\varphi^{e+1} \otimes_{A^\frac{1}{p^{g(e+1)}}} A^\frac{1}{p^{g(e+2)}}\otimes_R M} \\
 \left( L \otimes_R M^{p^g} \right)^\frac{1}{p^{g}} \otimes_{A^\frac{1}{p^{g}}} A^\frac{1}{p^{g(e+2)}}  \ar[d]_{ \varphi \otimes_{A^\frac{1}{p^g}} A^\frac{1}{p^{g(e+2)}} \otimes_R M } \\
M \otimes_A A^\frac{1}{p^{g(e+2)}}  \\
}
\end{equation*}
Now, notice that $M^{1-p^g} \cong L$, so after applying $f_*$ the above diagram becomes:
\begin{equation}
\label{eq:S_0_stabilizes}
\xymatrix@R=40pt{
\ar@/_12pc/[dd]|(0.7){ \alpha:= f_* \left( \left( \varphi^{e+1}  \otimes_R M \right)^\frac{1}{p^g}  \right)}  f_* \left( M^\frac{1}{p^{g(e+2)}} \right)  \ar@/_21pc/[ddd]|(0.8){\beta:=f_* ( \varphi^{e+2} \otimes_R M)} \\
f_* \left( M ^\frac{1}{p^{g(e+1)}} \otimes_{A^\frac{1}{p^{g(e+1)}}} A^\frac{1}{p^{g(e+2)}}  \right)
\ar[d]^{ \gamma:=f_* \left( \left(\varphi^e \otimes_R M \right)^\frac{1}{p^g} \otimes_{A^\frac{1}{p^{g(e+1)}}} A^\frac{1}{p^{g(e+2)}}   \right)} 
\ar@/^23pc/[dd]|(0.8){ \delta:=f_* \left( \varphi^{e+1} \otimes_{A^\frac{1}{p^{g(e+1)}}} A^\frac{1}{p^{g(e+2)}}  \otimes_R M \right)} \\
f_* \left( M^\frac{1}{p^{g}} \otimes_{A^\frac{1}{p^{g}}} A^\frac{1}{p^{g(e+2)}} \right)  \ar[d]_{ \varepsilon :=f_* \left( \varphi \otimes_{A^\frac{1}{p^g}} A^\frac{1}{p^{g(e+2)}}  \otimes_R M \right) } \\
f_* (M \otimes_A A^\frac{1}{p^{g(e+2)}})  \\
}
\end{equation}
Since $F_* f_* = f_* F_*$ and $A \to A^{1/p}$ is flat (because $Y$ is regular), $\alpha$ and $\gamma$ are isomorphic to $ \left( f_*  \left( \varphi^{e+1}  \otimes_R M \right) \right)^\frac{1}{p^g}$ and $ \left( f_* \left(\varphi^e \otimes_R M \right) \otimes_{A^\frac{1}{p^{ge}}} A^\frac{1}{p^{g(e+1)}}   \right)^\frac{1}{p^g}$, respectively.
Hence by the inductional hypothesis, over $V$
\begin{multline}
\label{eq:inductional_consequence}
 \im \alpha = \im \left( f_*  \left( \varphi^{e+1}  \otimes_R M \right) \right)^\frac{1}{p^g} = 
\underbrace{ \left( S_{\Delta,g(e+1)} f_* M \right)^\frac{1}{p^{g}} =  \left( S_{\Delta,ge} f_* M \otimes_{A^\frac{1}{p^{ge}}} A^\frac{1}{p^{g(e+1)}}  \right)^\frac{1}{p^{g}} }_{\textrm{by the inductional hypothesis}}
\\  = \im \left( f_* \left(\varphi^e \otimes_R M \right) \otimes_{A^\frac{1}{p^{ge}}} A^\frac{1}{p^{g(e+1)}}   \right)^\frac{1}{p^g} =  \im \gamma.
\end{multline}
However then,  still over $V$:
\begin{equation*}
S_{\Delta,g(e+1)} f_* M  \otimes_{A^\frac{1}{p^{g(e+1}}} A^\frac{1}{p^{g(e+2)}}
= \underbrace{\im \delta}_{\textrm{by \eqref{eq:S_0_definition}}}  
= \underbrace{\im \varepsilon \circ \gamma}_{\textrm{by \eqref{eq:S_0_stabilizes}}} 
 = \underbrace{\im \varepsilon \circ \alpha}_{\textrm{by \eqref{eq:inductional_consequence}}} 
= \underbrace{\im \beta}_{\textrm{by \eqref{eq:S_0_stabilizes}}} 
= \underbrace{S_{\Delta,g(e+2)} f_* M}_{\textrm{by \eqref{eq:S_0_definition}}} .
\end{equation*}
This concludes our inductional step and hence our proof as well.
\end{proof}

\begin{proof}[Proof of Theorem \ref{thm:openness_S_0}]
First we show the main statement. By \autoref{prop:stabilization}, we may choose a non-empty open subset $W \subseteq V$ over which $S_{ge} f_* (\sigma(X,\Delta) \otimes M)$ and its cokernel in $f_{Y^{ge},*} M_{Y^{ge}}$  are locally free for every (or equivalently one) $e \gg 0$. Hence over $W$ the map of \autoref{eq:base_change_S_0_condition} is isomorphism and hence so is \autoref{eq:base_change_S_0_result}. Then by combining this with Proposition \ref{prop:stabilization} we obtain that for every perfect point $y \in W$, $S_{\Delta,ge}(X_y,  M_y)$ is the same for every $e \gg0$ (where the lower bound on $e$ is independent of $y$). Hence, $S_{\Delta,ge}(X_y, M_y)$ stabilizes at the same value, and therefore $H^0(X_y, \sigma(X_y,\Delta_y) \otimes M_y)$ has the same dimension, for every perfect point $y \in W$.

To see the addendum,  apply \cite[Proposition 6.33]{Patakfalvi_Schwede_Zhang_F_singularities_in_families}.
\end{proof}

\section{Auxilliary lemmas}
\label{sec:auxilliary_lemmas}

\begin{proposition}
\label{prop:base_globally_generated}
Let $Y$ be a projective, normal scheme of pure dimension $d$ over $k$, such that $(1-p^g)K_Y$ is Cartier for some integer $g >0$. Further let $A$ be a globally generated ample Cartier divisor on $Y$ and $H$ another Cartier divisor such that $H - dA- K_Y$ is ample.  Then for every $n \gg 0$, $\sHom_Y \left( F_{*}^{gn} \sO_{Y}, H \right)$ is globally generated.
\end{proposition}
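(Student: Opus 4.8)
The plan is to reduce the statement to Serre vanishing via a Grothendieck-duality identification of $\sHom_Y(F^{gn}_* \sO_Y, H)$, in the same spirit as the proof of \autoref{thm:Cartier_module_globally_generated}. First I would use duality for the finite morphism $F^{gn}_Y : Y^{gn} \to Y$ to rewrite
\[
\sHom_Y\left(F^{gn}_* \sO_Y, H\right) \cong F^{gn}_*\sHom_{Y^{gn}}\left(\sO_{Y^{gn}}, (F^{gn}_Y)^! H\right) \cong F^{gn}_* \sO_{Y^{gn}}\left( (1-p^{gn}) K_Y + p^{gn} H \right),
\]
where I use that $(F^{gn}_Y)^! \sO_Y \cong \sO_{Y^{gn}}((1-p^{gn})K_Y)$ (this requires $(1-p^g)K_Y$, hence $(1-p^{gn})K_Y$, to be Cartier, which is exactly the hypothesis; note $Y$ being normal of pure dimension makes $K_Y$ a well-defined Weil divisor class, and $(1-p^{gn})K_Y$ Cartier). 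So it suffices to show that $F^{gn}_* \mathcal{L}_n$ is globally generated for $n \gg 0$, where $\mathcal{L}_n := \sO_{Y^{gn}}((1-p^{gn})K_Y + p^{gn}H)$, viewed as a sheaf on $Y$ via the finite map $F^{gn}_Y$.

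Next I would invoke Mumford's global-generation criterion through Castelnuovo--Mumford regularity \cite[Theorem 1.8.5]{Lazarsfeld_Positivity_in_algebraic_geometry_I}: since $A$ is ample and globally generated, $F^{gn}_* \mathcal{L}_n$ is globally generated as soon as it is $0$-regular with respect to $A$, i.e. $H^i(Y, F^{gn}_* \mathcal{L}_n \otimes A^{d-i}) = 0$ for all $i > 0$. Because $F^{gn}_Y$ is finite (here $Y$ is $F$-finite, being of finite type over the perfect field $k$), pushforward is exact and affine, so
\[
H^i\left(Y, F^{gn}_* \mathcal{L}_n \otimes A^{d-i}\right) \cong H^i\left(Y^{gn}, \mathcal{L}_n \otimes (F^{gn}_Y)^* A^{d-i}\right) \cong H^i\left(Y, \sO_Y\left((1-p^{gn})K_Y + p^{gn}(H + (d-i) A)\right)\right),
\]
where in the last step I identify $Y^{gn}$ with $Y$ as a scheme and pull the twist through the $p^{gn}$-power Frobenius. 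Writing $p^{gn}(H + (d-i)A) + (1-p^{gn})K_Y = K_Y + p^{gn}(H + (d-i)A - K_Y)$, the hypothesis that $H - dA - K_Y$ is ample gives that $H + (d-i)A - K_Y = (H - dA - K_Y) + (2d - i)A$ is ample for every $i$ with $0 < i \le d$ (it is a sum of an ample divisor and a nonnegative multiple of the ample $A$; for $i > d$ the cohomology vanishes for dimension reasons). Hence we are twisting a fixed coherent sheaf $\sO_Y(K_Y)$ by higher and higher multiples of ample divisors, and Serre vanishing kills $H^i$ for all $n \gg 0$ simultaneously (there are only finitely many values of $i$ to control).

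The main obstacle I anticipate is the bookkeeping around Grothendieck duality for the iterated Frobenius on a merely normal (not smooth) scheme: one must check that $(F^{gn}_Y)^! \sO_Y$ is the line bundle $\sO_{Y^{gn}}((1-p^{gn})K_Y)$ and not merely a reflexive sheaf, and this is precisely where the Cartier hypothesis on $(1-p^g)K_Y$ enters — it guarantees the relevant sheaf is invertible so that the projection-formula manipulations and the identification of the trace map make sense. Once that identification is in place, the rest is a routine application of Fujita-type vanishing exactly as in \autoref{thm:Cartier_module_globally_generated}; the only mild subtlety is ensuring the lower bound on $n$ can be chosen uniformly, which follows since finitely many cohomology groups are involved and the ample twists grow with $p^{gn}$.
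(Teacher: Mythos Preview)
Your approach is the same as the paper's---Grothendieck duality for the finite Frobenius to identify $\sHom_Y(F^{gn}_*\sO_Y,H)$ with $F^{gn}_*\sO_Y((1-p^{gn})K_Y+p^{gn}H)$, then Castelnuovo--Mumford regularity, then Serre vanishing---but you have a slip in the regularity index. To conclude that $F^{gn}_*\mathcal L_n$ is $0$-regular (hence globally generated) you must show
\[
H^i\bigl(Y,\,F^{gn}_*\mathcal L_n\otimes A^{-i}\bigr)=0\qquad(i>0),
\]
not $H^i(F^{gn}_*\mathcal L_n\otimes A^{d-i})=0$; the condition you wrote only yields $d$-regularity, i.e.\ global generation of $F^{gn}_*\mathcal L_n\otimes A^d$. (You likely imported the exponent $d-i$ from \autoref{thm:Cartier_module_globally_generated}, but there the target sheaf already carries the factor $A^n$.)

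With the correct twist the computation reads
\[
H^i\bigl(Y,\,F^{gn}_*\mathcal L_n\otimes A^{-i}\bigr)\cong H^i\bigl(Y,\,(H-iA)+(p^{gn}-1)(H-iA-K_Y)\bigr),
\]
and for $0<i\le d$ one has $H-iA-K_Y=(H-dA-K_Y)+(d-i)A$, which is ample as the sum of an ample and a nef divisor; so Serre vanishing applies for $n\gg 0$. This is exactly the paper's argument, and your discussion of why the duality step needs $(1-p^g)K_Y$ Cartier is on point.
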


\begin{proof}
By \cite[Theorem 1.8.5]{Lazarsfeld_Positivity_in_algebraic_geometry_I}, we are supposed to prove that for every $i >0$,
\begin{equation*}
H^i \left(Y, \sHom_Y \left( F_{*}^{gn} \sO_{Y}, H\right) \otimes \sO_Y(-iA) \right) = 0.
\end{equation*}
Note now that
\begin{multline*}
H^i \left(Y, \sHom_Y \left( F_{*}^{gn} \sO_{Y}, H\right) \otimes \sO_Y(-iA) \right) 
\\
\begin{split}
& \cong H^i\left(Y, \sHom_Y \left( F_{*}^{gn} \sO_{Y}, \sO_Y(H-iA)  \right) \right) \\
& \cong  \underbrace{H^i  \left(Y, F_{*}^{gn} \sHom_Y \left(  \sO_{Y}, \sO_{Y}(p^{gn}(H-iA) + (1-p^{gn}) K_{Y} )  \right) \right) }_{\textrm{Grothendieck duality}} \\
& \cong  \underbrace{H^i \left(Y, H-iA + (p^{gn}-1)(H-iA - K_{Y} ) \right) }_{\textrm{$F$ is affine}},
\end{split}
\end{multline*}
where the last group is zero for $n \gg 0$ by Serre-vanishing. 
\end{proof}

%

Recall that according to \autoref{sec:notation}, a fibration is a surjective, separable morphism of projective varieties over $k$ such that $f_* \sO_X \cong \sO_Y$.

\begin{lemma}
\label{lem:generic_Kodaira_dimension_behavior_by_an_ample_twist_from_base}
If  $f : X \to Y$ is a fibration and $A$ is a Cartier divisor on $X$, such that $X$ is normal, $\kappa(A_\xi) =l \geq 0$ (where $\xi$ is the generic point of $Y$) and $H$ is an ample Cartier divisor on $Y$, then $\kappa(A +  v f^*H) \geq  l + \dim Y$ for some (and hence every) $v \gg 0$.
\end{lemma}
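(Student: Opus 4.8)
The plan is to reduce the statement to the classical fact that, for a line bundle with nonnegative Iitaka dimension, twisting by a pullback of an ample divisor from the base adds exactly $\dim Y$ to the Iitaka dimension, provided the sections on the generic fiber ``spread out.'' First I would fix $v \gg 0$ and study sections of $A + v f^*H$ via the projection formula: $H^0(X, m(A+vf^*H)) = H^0(Y, f_*\sO_X(mA) \otimes \sO_Y(mvH))$ for $m$ divisible enough. So I want to understand $f_*\sO_X(mA)$ near the generic point $\xi$. By the definition of Iitaka dimension (see \autoref{defn:Kodaira_Iitaka}) and the hypothesis $\kappa(A_\xi) = l \ge 0$, for a suitable sequence of $m$ the rank $r_m := \dim_{K(Y)} (f_*\sO_X(mA))_\xi$ grows like $m^l$ up to bounded multiplicative constants; more precisely $\limsup_m \tfrac{\log r_m}{\log m} = l$, and one can arrange $r_m \ge c m^l$ along a subsequence.

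Next I would make $A+vf^*H$ have sections at all by choosing $v$ large: since $f_*\sO_X(A_\xi) \ne 0$, the sheaf $f_*\sO_X(A)$ is nonzero of some positive generic rank, so after replacing it by its image in the constant sheaf $K(Y)^{\oplus}$ and twisting, $(f_*\sO_X(mA))^{**} \otimes \sO_Y(mvH)$ becomes globally generated on a big open set for $v \gg 0$ by Serre's theorem applied to the reflexive hull (or to $f_*\sO_X(mA)/(\text{torsion})$ after passing to a resolution-free argument on the normal $Y$, using that $H$ is ample). The key point I then need is: for $m$ in the good subsequence, the evaluation of these $m^l$-many (up to constants) generic sections, multiplied by the roughly $\binom{mv + d}{d} \sim (mv)^d/d!$ independent sections of $\sO_Y(mvH)$ on $Y$ (where $d = \dim Y$), produces $\gtrsim m^l \cdot m^d$ linearly independent sections of $m(A+vf^*H)$ on $X$. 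Linear independence over $k$ follows because multiplying a fixed $K(Y)$-basis of $(f_*\sO_X(mA))_\xi$ by a $k$-basis of $H^0(Y, \sO_Y(mvH))$ gives $K(Y)$-linearly-independent elements once $v$ is large enough that $H^0(Y, f_*\sO_X(mA)\otimes\sO_Y(mvH))$ surjects onto its generic stalk with the expected rank — this is where ampleness of $H$ and the bound on $v$ enter, via vanishing of $H^1$ of the relevant twisted sheaf.

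Combining, $h^0(X, m(A+vf^*H)) \gtrsim c\, m^{l+d}$ along the subsequence, hence $\kappa(A+vf^*H) \ge l + d = l + \dim Y$ for this $v$; and enlarging $v$ only increases sections, so the inequality holds for every $v \gg 0$. I expect the main obstacle to be the bookkeeping that makes the ``generic rank times base sections'' count rigorous: one must ensure that after twisting by $mvH$ the sheaf $f_*\sO_X(mA)\otimes\sO_Y(mvH)$ has enough global sections to realize its full generic rank (not fewer), uniformly enough in $m$, which requires choosing $v$ independent of $m$ — handled by noting that if $A' := A + v_0 f^*H$ already has $f_*\sO_X(A')$ globally generated on a dense open set for one fixed $v_0$, then so is $f_*\sO_X(mA')$ for all $m$, since it is a quotient of $(f_*\sO_X(A'))^{\otimes m}$ up to bounded correction, and then any further twist $+\, (v - v_0) f^* H$ with $v > v_0$ keeps global generation. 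The normality of $X$ is used only to make sense of $\kappa$ and the divisor-to-sheaf dictionary; no resolution is needed.
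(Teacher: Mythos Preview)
Your overall strategy---counting sections of $m(A+vf^*H)$ as (generic rank of $f_*\sO_X(mA)$) $\times$ (sections coming from the base)---is exactly the right one, and you have correctly isolated the real difficulty: making the section count work uniformly in $m$ for a single choice of $v$. However, your proposed fix for this uniformity problem contains a genuine error.

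You write that ``$f_*\sO_X(mA')$ \dots\ is a quotient of $(f_*\sO_X(A'))^{\otimes m}$ up to bounded correction.'' This is false: the natural multiplication map goes the other way, $\Sym^m f_*\sO_X(A') \to f_*\sO_X(mA')$, and it is typically \emph{not} surjective. Consequently, global generation of $f_*\sO_X(A')$ only yields global generation of the \emph{image} of this multiplication map inside $f_*\sO_X(mA')$, a subsheaf whose generic rank is $\dim V_m$ where $V_m := \im\bigl(\Sym^m H^0(X_\xi, A'_\xi) \to H^0(X_\xi, mA'_\xi)\bigr)$, not the full rank $r_m = h^0(X_\xi, mA'_\xi)$. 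So the count you actually obtain is $\gtrsim (\dim V_m)\cdot m^{\dim Y}$, and you then need the separate, nontrivial fact that $\dim V_m \geq c\,m^l$ for a suitable choice of the initial multiple. This is not automatic from $\kappa(A_\xi)=l$; it requires the embedding lemma (\autoref{lem:embedding}): after a birational modification, some multiple of $A_\xi$ dominates an ample divisor, and symmetric powers of sections of an ample divisor do grow at the right rate.

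The paper's proof is essentially your argument with this gap repaired. It fixes one $s$ so that, via \autoref{lem:embedding}, the graded linear series $V_n = \im(\Sym^n H^0(X_\xi, sA_\xi)\to H^0(X_\xi, nsA_\xi))$ satisfies $\dim V_n \geq c\,n^l$. Then it makes $f_*\sO_X(sA)\otimes\sO_Y(tH)$ globally generated for a single $t$, obtains an injection $\sO_Y(H)^{\oplus h^0(X_\xi, sA_\xi)} \hookrightarrow f_*\sO_X(sA + (t+1)f^*H)$, and takes symmetric $n$-th powers followed by multiplication to produce an injection $\sO_Y(nH)^{\oplus \dim V_n} \hookrightarrow f_*\sO_X\bigl(n(sA+(t+1)f^*H)\bigr)$ for every $n$. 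Taking global sections gives $h^0(n(sA+(t+1)f^*H)) \geq h^0(Y,nH)\cdot \dim V_n \geq c' c\, n^{l+\dim Y}$, and \autoref{cor:Kodaira_Iitaka_dimension_with_sections} finishes. The uniformity in $n$ is automatic precisely because only one pushforward, $f_*\sO_X(sA)$, ever needs to be globally generated.
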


\begin{proof}
Fix then an integer $s > 0$ and define the linear subspace $V_{n} \subseteq H^0(X_\xi,s n A_\xi)$ as 
\begin{equation*}
V_n:=\im ( \Sym^n H^0(X_\xi,sA_\xi) \to H^0(X_\xi,ns A_\xi)). 
\end{equation*}
 Further  choose this integer $s$, such that  $\kappa( V_\bullet) = l$, that is, $\dim V_n \geq c n^l$ for some positive real number $c>0$. Such choice of $s$ exists, because $sA_\xi$  is a Cartier divisor on the normal variety $X_\xi$ over the field $k(\xi)$ such that $H^0(X_\xi, \sO_{X_\xi})= k(\xi)$, and then \autoref{lem:embedding} applies (i.e., the fact that the pullback of $A_{\xi}$ to some birational modification contains the pullback of an ample line bundle). Note that the normality of $X_\xi$ follows from the normality of $X$, since $X_\xi$ is obtained from $X$ by localization.

Consider next $f_* \sO_X (sA)$. Since $H$ is ample, for every integer $t \gg 0$, $f_* \sO_X (sA) \otimes \sO_Y(tH) \cong f_* \sO_X(sA + tf^*H)$ is globally generated. Hence there is an injection
\begin{equation*}
H^{\oplus h^0(X_\xi, sA_\xi)} \hookrightarrow f_* \sO_X(sA + (t+1)f^*H) ,
\end{equation*}
which induces the following commutative diagram. 
\begin{equation*}
\xymatrix{
\left(H^{\otimes n}\right)^{\oplus \dim_{k(\xi)} \Sym^n H^0(X_\xi, sA_\xi)}  \cong S^{n} \left(H^{\oplus h^0(X_\xi, sA_\xi)}\right) \ar@{^(->}[r] & S^{n}\big(f_* \sO_X(sA + (t+1)f^*H)\big) \ar[d] \\ 
(H^{\otimes n})^{\oplus \dim_{k(\xi)} V_n} \ar@{^(->}[u]  \ar@{^(->}[r] & f_* \sO_X \big(n(sA + (t+1)f^*H)\big) &  \\ 
}
\end{equation*}
Hence, by the bottom horizontal line of the above diagram, for some positive real constant $c'$,
\begin{equation*}
h^0(n(sA + (t+1)f^* H)) \geq h^0(Y, nH) \cdot \dim_{k(\xi)} V_n \geq  c' n^{\dim Y} c n^l = c'  c n^{l + \dim Y}.
\end{equation*}
Then $\kappa(sA + (t+1) f^* H) \geq l + \dim Y$ by \autoref{cor:Kodaira_Iitaka_dimension_with_sections}.
\end{proof}

\begin{lemma}
\label{lem:from_effective_to_big}
If $f : X \to Y$ is a fibration, $A$ is a Cartier divisor on $X$ such that $X$ is normal, $\kappa(A_\xi)=l \geq 0$ (where $\xi$ is the generic point of $Y$) and $H$ an ample Cartier divisor on $Y$, then 
\begin{equation*}
\inf \{ s \in \bQ | \kappa( A + s f^*H) \geq l + \dim Y \} = \inf \{ s \in \bQ | \kappa(A + s f^* H) \geq 0  \}.
\end{equation*}
\end{lemma}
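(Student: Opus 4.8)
The key observation is that one inclusion is trivial and the other requires an argument. Indeed, since $\kappa(A + s f^*H) \geq l + \dim Y$ implies $\kappa(A + s f^*H) \geq 0$ (as $l \geq 0$), the set on the left is contained in the set on the right, and hence
\[
\inf \{ s \in \bQ \mid \kappa( A + s f^*H) \geq l + \dim Y \} \geq \inf \{ s \in \bQ \mid \kappa(A + s f^* H) \geq 0  \}.
\]
So the whole content is the reverse inequality: I must show that whenever $\kappa(A + s_0 f^*H) \geq 0$ for some $s_0$, then for every $\varepsilon > 0$ there is some $s \leq s_0 + \varepsilon$ with $\kappa(A + s f^*H) \geq l + \dim Y$.

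The plan is to run the argument of \autoref{lem:generic_Kodaira_dimension_behavior_by_an_ample_twist_from_base} on the divisor $A' := A + s_0 f^* H$ in place of $A$. First I would check that $A'$ still satisfies the hypotheses of that lemma: $X$ is normal and $f$ is a fibration by assumption, and $\kappa(A'_\xi) = \kappa(A_\xi) = l \geq 0$ because $(f^*H)_\xi = (f^* H)|_{X_\xi}$ is trivial (the generic fiber of $f$ maps to the generic point $\xi$, so $f^*H$ restricts to a trivial line bundle on $X_\xi$). Applying \autoref{lem:generic_Kodaira_dimension_behavior_by_an_ample_twist_from_base} to $A'$ gives an integer $v \gg 0$ with
\[
\kappa\bigl(A' + v f^* H\bigr) = \kappa\bigl(A + (s_0 + v) f^* H\bigr) \geq l + \dim Y.
\]
This already shows the left-hand infimum is finite, but it does not yet show it is $\leq$ the right-hand infimum, because $v$ is large.

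To get the sharper bound I would use the homogeneity trick available because $H$ is ample: for any positive integer $m$, rescaling shows that $\kappa(A + s f^* H) \geq 0$ is equivalent to $\kappa(mA + ms f^* H) \geq 0$, and one can run the previous paragraph's argument with $A$ replaced by $mA$. The point is that \autoref{lem:generic_Kodaira_dimension_behavior_by_an_ample_twist_from_base} produces a twist $v f^* H$ where the needed ``globally generated after twisting'' threshold $t$ depends on $f_* \sO_X(sA)$, and by Serre-type boundedness one can arrange that when we replace $A$ by $mA$ and $s_0$ by $m s_0$, the extra twist needed is bounded independently of $m$ — say by a fixed constant $t_0$ (coming from an ample-enough line bundle on $Y$ that works uniformly, as in the Castelnuovo–Mumford regularity statement underlying the proof). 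Then
\[
\kappa\bigl(mA + (m s_0 + t_0) f^* H\bigr) \geq l + \dim Y,
\]
which, dividing the $\bQ$-divisor by $m$, gives $\kappa\bigl(A + (s_0 + t_0/m) f^* H\bigr) \geq l + \dim Y$ for all $m \gg 0$; letting $m \to \infty$ shows the left-hand infimum is $\leq s_0$, hence $\leq$ the right-hand infimum, completing the proof.

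The main obstacle I anticipate is making the uniform boundedness of the twist $t_0$ genuinely independent of $m$. A cleaner route, which I would actually prefer, avoids this entirely: observe that $\{ s \in \bQ \mid \kappa(A + s f^* H) \geq 0 \}$ is of the form $[a,\infty) \cap \bQ$ or $(a,\infty) \cap \bQ$ (as noted for the analogous interval after \autoref{thm:effective_general_fiber_implies_inf_is_zero}, and likewise here since $H$ is ample), so it suffices to show that for \emph{every} rational $s$ with $\kappa(A + s f^* H) \geq 0$, one has $\kappa(A + s' f^* H) \geq l + \dim Y$ for all $s' > s$. Fix such $s$ and $s'$, write $s' = s + \delta$ with $\delta > 0$, and apply \autoref{lem:generic_Kodaira_dimension_behavior_by_an_ample_twist_from_base} with $A$ replaced by $m(A + s f^* H)$ for $m$ chosen large enough that the required threshold $v$ satisfies $v \leq m\delta$ (possible since the statement of that lemma gives ``some and hence every $v \gg 0$'', and $\kappa$ is unchanged under scaling the divisor by $m$, while $m\delta \to \infty$). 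This yields $\kappa\bigl(m(A + sf^*H) + m\delta f^* H\bigr) \geq l + \dim Y$, i.e. $\kappa(A + s' f^* H) \geq l + \dim Y$, as desired. Here the only subtlety is that ``$v \gg 0$'' in \autoref{lem:generic_Kodaira_dimension_behavior_by_an_ample_twist_from_base} means the conclusion holds for all sufficiently large integers $v$, so once $m\delta$ exceeds that bound we are done; this is immediate and requires no uniformity in $m$.
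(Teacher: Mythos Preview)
Your reduction to the one nontrivial inequality is correct, and so is the observation that $(f^*H)_\xi$ is trivial, so $\kappa(A'_\xi)=l$ for $A'=A+s f^*H$. The gap is in your ``cleaner route''. You apply \autoref{lem:generic_Kodaira_dimension_behavior_by_an_ample_twist_from_base} to the divisor $m(A+sf^*H)$ and claim the resulting threshold $v_0$ satisfies $v_0\le m\delta$ for $m$ large, asserting this ``requires no uniformity in $m$''. But the threshold produced by that lemma depends on the divisor to which it is applied, hence on $m$; you have given no reason why $v_0(m)$ grows more slowly than $m\delta$. In fact your argument, as written, never uses the hypothesis $\kappa(A+sf^*H)\ge 0$ in bounding $v_0(m)$, so if it worked it would prove $\kappa(A+s'f^*H)\ge l+\dim Y$ for \emph{every} rational $s'$, which is false. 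This is exactly the same uniformity obstacle you flagged in your first approach; the scaling trick does not circumvent it.

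The paper's proof sidesteps this entirely with a convex-combination argument. One applies \autoref{lem:generic_Kodaira_dimension_behavior_by_an_ample_twist_from_base} \emph{once}, to $A$ itself, obtaining a fixed rational $a$ with $\kappa(A+af^*H)\ge l+\dim Y$. Then for any $\varepsilon>0$ one has $A+(t+\varepsilon)f^*H\sim_{\bQ}\Gamma$ with $\Gamma$ effective, and hence
\[
(1-\varepsilon)\big(A+(t+\varepsilon)f^*H\big)+\varepsilon\big(A+af^*H\big)=A+\big((1-\varepsilon)(t+\varepsilon)+\varepsilon a\big)f^*H
\]
has Kodaira--Iitaka dimension $\ge \kappa\big(\varepsilon(A+af^*H)\big)=\kappa(A+af^*H)\ge l+\dim Y$, since adding the effective $\bQ$-divisor $(1-\varepsilon)\Gamma$ can only increase $\kappa$. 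Letting $\varepsilon\to 0$ pushes the coefficient to $t$. The point is that the effectivity of $A+(t+\varepsilon)f^*H$ is used not to feed back into the lemma, but to absorb most of the divisor as something harmless, leaving only an $\varepsilon$-fraction that carries the high Kodaira dimension from a single, fixed application of the lemma.
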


\begin{proof}
First, note that by definition
\begin{equation*}
t:=\inf \{ s \in \bQ | \kappa(A + s f^* H) \geq 0 \} \leq \inf \{ s \in \bQ | \kappa( A + s f^*H ) \geq l + \dim Y \}.
\end{equation*}
So, we are only supposed to prove the inequality in the other direction. By \autoref{lem:generic_Kodaira_dimension_behavior_by_an_ample_twist_from_base}, there is an $\bQ \ni a \gg 0$, such that $\kappa(A +af^*H) \geq l  + \dim Y$. Further, for every $\bQ \ni \varepsilon >0$,  $A + (t + \varepsilon) f^* H \sim_{\bQ} \Gamma$ for some effective $\bQ$-Cartier divisor $\Gamma$. Therefore the Kodaira-Iitaka dimension of the following divisor is at least $l + \dim Y$.
\begin{equation*}
(1- \varepsilon) (A  + (t +\varepsilon) f^* H) + \varepsilon (A + a f^*H)= A + ((1-\varepsilon) (t +\varepsilon) + \varepsilon a) f^*H
\end{equation*}
This concludes the proof, since $\lim_{\varepsilon \to 0} ((1-\varepsilon) (t+\varepsilon) + \varepsilon a) = t$.
\end{proof}

Now recall that for any scheme $X$ over $k$, $X^n$ denotes the source of the $n$-th iterated absolute Frobenius morphism of $X$, when it is important to distinguish between the source and the target. See \autoref{sec:notation} for an elaboration on this issue.

\begin{lemma}
\label{lem:effectivity_and_Frobenius_pullbacks}
Let  $f : X \to Y$ be a fibration such that $Y$ is normal and $K_Y$ is $\bQ$-Cartier with index not divisible by $p$. Further let $H$ be an ample divisor on $Y$. Then, for every $\bQ$-Cartier divisor $D$ on $X$ if $\kappa(D_{X_{Y^n}}) \geq 0$  for some $n \geq 0$, then $ \kappa(D + \varepsilon f^*H) \geq 0$ as well for every $ \bQ \ni \varepsilon >0$.
\end{lemma}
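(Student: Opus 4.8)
The engine of the proof is the relative Frobenius $F_{X^n/Y^n}\colon X^n\to X_{Y^n}$ of $f^n\colon X^n\to Y^n$, a \emph{finite surjective} morphism which fits into the factorization $F_X^n=(F_Y^n)_X\circ F_{X^n/Y^n}$ recorded by the commutative diagram in \autoref{sec:relative_Cartier_modules}. Since $X^n$ and $X_{Y^n}$ have the same underlying scheme as $X$, and the absolute Frobenius $F_X^n\colon X^n\to X$ multiplies $\bQ$-Cartier divisors by $p^{n}$, we get the equality of $\bQ$-Cartier divisors on $X$
\[
(F_{X^n/Y^n})^{*}\bigl(D_{X_{Y^n}}\bigr)=(F_{X^n/Y^n})^{*}(F_Y^n)_X^{*}D=(F_X^n)^{*}D=p^{n}D .
\]
So the plan is: from $\kappa(D_{X_{Y^n}})\ge 0$ extract a nonzero section of some Cartier multiple $m\,D_{X_{Y^n}}$ on $X_{Y^n}$; pull it back along $F_{X^n/Y^n}$ to a section of $\sO_X(mp^{n}D)$; conclude $\kappa(D)\ge 0$; and deduce the statement for $D+\varepsilon f^{*}H$ from there.

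The last step is immediate: since $H$ is ample on the projective variety $Y$, some positive multiple of $H$ is linearly equivalent to an effective Cartier divisor, so for any $\varepsilon\in\bQ_{>0}$ a suitable common multiple of $D+\varepsilon f^{*}H$ is linearly equivalent to a multiple of an effective representative of $mD$ plus an effective divisor pulled back from $Y$; hence $\kappa(D+\varepsilon f^{*}H)\ge 0$ once $\kappa(D)\ge 0$. (This line of argument in fact produces the stronger conclusion $\kappa(D)\ge 0$; the weaker statement of the lemma is what survives the loss of positivity forced by the general case described next.)

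The one delicate point is that the pulled-back section $(F_{X^n/Y^n})^{*}s$ not vanish. This is automatic when $X_{Y^n}$ is a variety, since then $F_{X^n/Y^n}$ is a dominant morphism of integral schemes and $\sO_{X_{Y^n}}\hookrightarrow (F_{X^n/Y^n})_{*}\sO_{X^n}$; in particular this covers $Y$ regular, where $F_Y^n$ is flat and $X_{Y^n}\to Y^n$ is again a fibration. I expect the main obstacle to be the case where $Y$ is only normal: then $F_Y^n$ need not be flat and $X_{Y^n}$ may fail to be reduced along $f^{-1}(\Sing Y)$, so a nonzero section of a line bundle on $X_{Y^n}$ could pull back to zero on the reduced scheme $X^n=X$. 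This is where I expect the remaining hypotheses to enter. Using that $K_Y$ has index prime to $p$ one chooses $n$ with $(1-p^{n})K_Y$ Cartier, so that $\omega_{X_{Y^n}/Y^n}$ differs from $(F_Y^n)_X^{*}\omega_{X/Y}$ only by the pullback of this Cartier divisor; one then runs the pullback argument with the Grothendieck trace of $F_{X^n/Y^n}$ in place of naive pullback, and the ample twist $\varepsilon f^{*}H$ supplies precisely the room needed to absorb the error supported over $\Sing Y$ and to produce a genuine (non-torsion) section on $X$. Concretely I would first reduce, via the normalization and ample-twisting arguments of \autoref{lem:from_effective_to_big} and \autoref{lem:generic_Kodaira_dimension_behavior_by_an_ample_twist_from_base}, to divisors of the shape $K_{X/Y}+sf^{*}L$, for which the relative-Frobenius trace is available, and then descend a section of $\omega_{X_{Y^n}/Y^n}^{\,m}\otimes f_{Y^n}^{*}G$ to a section of $\omega_{X/Y}^{\,m}\otimes f^{*}G'$ on $X$.
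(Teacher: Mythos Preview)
Your pullback-along-the-relative-Frobenius argument is a genuinely different route from the paper's, and in the case $Y$ regular it is correct and in fact sharper: since the generic fiber of $f$ is geometrically integral, $X_{Y^n}$ is then an honest variety, so a nonzero section of $m\,D_{X_{Y^n}}$ is nonzero at the generic point and its pullback along $F_{X^n/Y^n}$ is a nonzero section of $mp^nD$ on $X$. You get $\kappa(D)\ge 0$ outright, and the $\varepsilon f^*H$ twist is superfluous. (This covers the only application of the lemma in the paper, where $Y$ is regular; note also that in this case the index hypothesis on $K_Y$ is automatic.)

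The gap is the general normal case. You correctly diagnose that $X_{Y^n}$ may fail to be reduced over $\Sing Y$ and that a section living in the nilradical dies under $F_{X^n/Y^n}^*$. But your proposed repair is not a proof: \autoref{lem:from_effective_to_big} and \autoref{lem:generic_Kodaira_dimension_behavior_by_an_ample_twist_from_base} do not reduce an arbitrary $\bQ$-Cartier $D$ to one of the shape $K_{X/Y}+sf^*L$, the relative-Frobenius trace is only available for $\omega_{X/Y}$-type divisors (and nothing about $X$ beyond ``variety'' is assumed here), and the final ``descend a section'' sentence is really the main argument of \autoref{sec:argument}, not this lemma. The paper's mechanism is orthogonal to yours and is what actually makes the normal case work: rather than pulling back to $X^n$, it \emph{pushes forward} along $q\colon X_{Y^n}\to X$, writes the section as an element of $\sO_X(jrD)\otimes q_*\sO_{X_{Y^n}}$, and then applies an $\sO_X$-linear map $q_*\sO_{X_{Y^n}}\to \sO_X(f^*H)$ chosen (pointwise, using global generation of $\sHom_X(q_*\sO_{X_{Y^n}},f^*H)\cong f^*\sHom_Y(F_{Y,*}^n\sO_{Y^n},H)$ over the flat locus) to send that element to something nonzero in $\sO_X(jrD+f^*H)$. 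Because this projection is merely $\sO_X$-linear it is indifferent to whether the section sits in the nilradical of $q_*\sO_{X_{Y^n}}$. The index-prime-to-$p$ hypothesis on $K_Y$ enters exactly here, via \autoref{prop:base_globally_generated}, to guarantee that $\sHom_Y(F_{Y,*}^n\sO_{Y^n},H)$ is globally generated; and the unavoidable $f^*H$ twist is the price of the projection, which is why only the weaker conclusion $\kappa(D+\varepsilon f^*H)\ge 0$ survives.
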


\begin{proof}
Since the statement of the proposition is invariant under scaling $H$, by \autoref{prop:base_globally_generated} we may assume that $\sHom_Y ( F_{Y,*}^n \sO_{Y^n}, H)$ is globally generated for every integer $n \geq 0$. Denote by $q$ the natural projection $X_{Y^n} \to X$. Since  $F_{Y,*}^n \sO_{Y^n}$ is  locally free over some non-empty open set $U$ of $Y$, it follows that 
\begin{equation*}
\sHom_X ( q_* \sO_{X_{Y^n}}, f^* H )|_{f^{-1}(U)} \cong f^* \sHom_Y ( F_{Y,*}^n \sO_{Y^n}, H )|_{f^{-1}(U)}.
\end{equation*}
In particular, $\sHom_X ( q_* \sO_{X_{Y^n}}, f^* H )$ is globally generated over $f^{-1}(U)$. Let $r >0$ be an integer such that $rD$ is an integer Cartier divisor and further that $rD_{X_{Y^n}}$ is linearly equivalent to an effective Cartier divisor. Then $\sO_{X_{Y^n}}(jrD_{X_{Y^n}} )$ has a section $s_j$ for every integer $j>0$. Choose $P \in X_{U^n} (:= X \times_Y U^n)$ such that $(s_j)_P \neq 0$ and let $Q$ be the image of $P$ in $X$ (which agrees with $P$ if we identify the underlying topological spaces for $X$ and $X_{Y^n}$). 
Note now that $s_j$ induces a section $\overline{s_j}$ of
\begin{equation*}
q_* \sO_{X_{Y^n}}(jrD_{X_{Y^n}}) \cong \sO_X(jrD) \otimes q_* \sO_{X_{Y^n}},
\end{equation*}
which is not zero at $Q$. Choose now an element $u$ of $\sHom(q_* \sO_{X_{Y^n}}, \sO_X) \otimes k(Q)$ that takes $(\overline{s_j})_Q$ to a non-zero element, and let $s \in \Hom_X ( q_* \sO_{X_{Y^n}}, f^* H )$ be an extension of $u$ to a global section. Then $\id_{\sO_X(jrD)} \otimes s$ takes $\overline{s_j}$ to a section of $\sO_X(jrD + H)$ which is not zero at $Q$. Hence $jrD + H$ is linearly equivalent to an effective Cartier divisor for every integer $j>0$. This concludes our proof.
\end{proof}

\section{Argument}
\label{sec:argument}

\begin{notation}
\label{notation:fibration}
Let  $f : X \to Y$ be a fibration (see \autoref{sec:notation}) with $\eta$ and $\overline{\eta}$ the perfect and the algebraic closures of the geometric general point of $Y$. Further assume that
\begin{enumerate}
\item $X$ is Gorenstein,
\item $Y$ is regular 
\item $X_{\overline{\eta}}$ is integral and
\item $S^0(X_\eta, \omega_{X_{\eta}}) \neq 0$. 
\end{enumerate}
We set $d:= \dim Y$. Further, fix a very ample Cartier divisor $A$ on $Y$. Let $H$ be a very ample Cartier divisor, such that  $H - K_Y - (d+1)A$ is very ample and further 
\begin{equation*}
 \sHom_Y \left( F_{Y,*}^{n} \sO_{Y^{n}}, H \right)
\end{equation*}
is globally generated for every $n \geq 0$ (such choice is possible by \autoref{prop:base_globally_generated}). Define then
\begin{equation*}
t:=\inf \{ s \in \bQ | \kappa( K_{X/Y} + s f^* H ) \geq 0 \}. 
\end{equation*}
\end{notation}

Our goal is to prove that in the situation of \autoref{notation:fibration}, $t \leq 0$. This implies then the statement of \autoref{thm:effective_general_fiber_implies_inf_is_zero} (to see that separability of $f$, normality of $X$ and that $f_* \sO_X \cong \sO_Y$ implies that $X_{\overline{\eta}}$ is integral use \cite[Prop 4.5.9, Prop 4.6.1]{Grothendieck_Elements_de_geometrie_algebrique_IV_II}). From now, we assume  \autoref{notation:fibration} throughout the section.

\begin{lemma}
For any integer $n >0$, if one replaces $f : X \to Y$ by $ f_{Y^n} : X_{Y^n} \to Y^n$ then the assumptions of Notation \ref{notation:fibration} still hold. 
\end{lemma}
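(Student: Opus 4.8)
The plan is to verify each of the four conditions of \autoref{notation:fibration} after base-change along the $n$-th iterated absolute Frobenius $F_Y^n : Y^n \to Y$. The key observation is that $F_Y^n$ is a finite, flat (since $Y$ is regular), purely inseparable morphism, and that the relevant properties are all insensitive to such a base-change.

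\begin{proof}
Write $f' := f_{Y^n} : X_{Y^n} \to Y^n$ and let $q : X_{Y^n} \to X$ be the projection. First, $f'$ is a fibration: it is surjective since $f$ is and base-change preserves surjectivity; it is separable since separability is stable under base-change and $X_{\overline{\eta}}$ being integral guarantees that the generic fiber of $f$ (hence of $f'$) is geometrically integral, in particular geometrically reduced; and $f'_* \sO_{X_{Y^n}} \cong \sO_{Y^n}$ by flat base-change along $F_Y^n$ applied to $f_* \sO_X \cong \sO_Y$. Next, $Y^n$ is regular because it is isomorphic to $Y$ as a scheme (only the $k$-structure changes), and likewise $X_{Y^n}$ is Gorenstein: since $F_Y^n$ is flat, $X_{Y^n} \to X$ is flat with regular (indeed, $0$-dimensional over each point, and obtained from the regular scheme $Y^n$) fibers, so Gorenstein-ness of $X$ passes to $X_{Y^n}$; alternatively, $X_{Y^n}$ is Gorenstein over $Y^n$ because $X$ is Gorenstein over $Y$ (as $Y$ is regular) and relative Gorenstein-ness is stable under base-change, and $Y^n$ is regular. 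For the integrality of the geometric generic fiber, note that the geometric generic point of $Y^n$ maps to the geometric generic point of $Y$ (again $F_Y^n$ is purely inseparable and a homeomorphism on underlying spaces), so $(X_{Y^n})_{\overline{\eta'}} \cong X_{\overline{\eta}}$, which is integral by assumption.

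Finally, for the condition $S^0\big((X_{Y^n})_{\eta'}, \omega_{(X_{Y^n})_{\eta'}}\big) \neq 0$: here $\eta'$ is the perfect closure of the generic point of $Y^n$, but since $Y^n$ and $Y$ have the same underlying scheme and the perfect closure of a point depends only on the residue field up to the Frobenius twist, $\eta'$ is canonically identified with $\eta$, and correspondingly $(X_{Y^n})_{\eta'} \cong X_\eta$ as schemes over the perfect field $k(\eta)$. Under this identification $\omega_{(X_{Y^n})_{\eta'}}$ corresponds to $\omega_{X_\eta}$, and the Cartier-module structure (i.e.\ the Frobenius trace defining $S^0$) is the same. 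Hence $S^0\big((X_{Y^n})_{\eta'}, \omega_{(X_{Y^n})_{\eta'}}\big) \cong S^0(X_\eta, \omega_{X_\eta}) \neq 0$. This establishes all four conditions of \autoref{notation:fibration} for $f_{Y^n} : X_{Y^n} \to Y^n$.
\end{proof}

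The step I expect to be the main (though still minor) obstacle is making precise the identification of the perfect generic fiber of $X_{Y^n} \to Y^n$ with that of $X \to Y$, together with its $S^0$-structure: one must be careful that ``$Y^n$'' is the \emph{same} scheme as $Y$ with only the $k$-linear structure altered, so that passing from $Y$ to $Y^n$ genuinely does nothing to the generic point, its perfect closure, or the fiber over it. Once this is absorbed, everything else is a routine check that regularity, Gorenstein-ness, separability, surjectivity, the condition $f_*\sO = \sO$, and geometric integrality of the generic fiber are all stable under the flat base-change $F_Y^n$.
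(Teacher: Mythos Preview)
Your approach is essentially the same as the paper's, but there are two small gaps worth fixing.

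First, your argument that $X_{Y^n}$ is Gorenstein is not quite right. The fibers of $q : X_{Y^n} \to X$ are \emph{not} regular: over $x \in X$ with $y = f(x)$, the fiber is $\Spec\bigl(k(x) \otimes_{k(y)} \sO_{Y,y}/\mathfrak{m}_y^{[p^n]}\bigr)$, which for $\dim_y Y > 0$ is a non-reduced Artinian local ring. They are, however, complete intersections (hence Gorenstein), so replacing ``regular'' by ``Gorenstein'' salvages the first version of your argument. Your alternative (``relative Gorenstein-ness is stable under base-change'') tacitly assumes $f$ is flat, which is not part of the hypotheses. The paper instead argues via dualizing complexes: since both $X$ and $Y$ are Gorenstein, $\omega_{X/Y}^\bullet$ is a line bundle shifted to degree $\dim Y - \dim X$, and by flat base-change along $F_Y^n$ one has $\bigl(\omega_{X/Y}^\bullet\bigr)_{Y^n} \cong \omega_{X_{Y^n}/Y^n}^\bullet$; twisting back by $\omega_{Y^n}$ shows $\omega_{X_{Y^n}}^\bullet$ is a shifted line bundle, so $X_{Y^n}$ is Gorenstein.

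Second, you never verify that $X_{Y^n}$ is a \emph{variety}, i.e.\ integral, which is part of the definition of a fibration. Irreducibility is immediate since $q$ is a homeomorphism on underlying spaces. For reducedness, the paper notes that $X_{Y^n}$ is Cohen--Macaulay (in particular $S_1$), so there are no embedded points and it suffices to check reducedness at the generic point; this follows from separability of $f$ (the generic fiber is geometrically reduced). You have all the ingredients for this in your discussion of separability, but you do not actually assemble them into a proof that $X_{Y^n}$ is integral.
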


\begin{proof}
Since $Y$ is regular, $Y^n \to Y$ is flat. Hence:
\begin{enumerate}
\item Since $X$ is Gorenstein $\omega_X^\bullet$ is a line bundle shifted in cohomological degree $- \dim X$. Using that $Y$ is Gorenstein as well, $\omega_{X/Y}^\bullet \cong \omega_X^\bullet [ - \dim Y] \otimes \omega_Y^{-1}$ and hence it is also a line bundle but it lives in cohomological degree $\dim Y - \dim X$.  Further, $\left( \omega_{X/Y}^\bullet \right)_{Y^n} \cong \omega_{X_{Y^n}/Y^n}^\bullet$  by flat pullback \cite[Corollary VII.3.4.a]{Hartshorne_Residues_and_duality}. So, $\omega_{X_{Y^n}}^\bullet \cong \omega_{X_{Y^n}/Y^n}^\bullet [ \dim Y] \otimes \omega_{Y^n} $ is also a line bundle shifted in cohomological degree $- \dim X$. Therefore,  $X_{Y^n}$ is indeed Gorenstein. 
\item $Y^n$ is regular, since as a scheme it is identical to $Y$.
\item The perfect and geometric generic points of $Y$ and of $Y^n$ agree. Hence the conditions on $X_{\eta}$ and $X_{\overline{\eta}}$ still hold.
\end{enumerate}
Further, we have to prove that $f$ is still a fibration. Separability is stable under pullback and surjectivity of $f_{Y^n}$ is also immediate since $f_{Y^n}$ agrees with $f$ on the underlying topological spaces. Further $f_{Y^n,*} \sO_{X_{Y^n}} \cong \sO_{Y^n}$ by flat base-change \cite[Proposition III.9.3]{Hartshorne_Algebraic_geometry}. So we only have to show that $X_{Y^n}$ is a variety. First, again by topological arguments we know that it is irreducible. We just have to prove that it is reduced. Since it is Cohen-Macaulay (and in particular $S_1$), its only embedded points can be its generic point. So, it is enough to show that $X_{Y^n}$ is reduced at its generic point. However, that follows from the separability of $f$.
\end{proof}

%
%
%
%

\begin{notation}
\label{notation:the_Cartier_module}
In the situation of \autoref{notation:fibration},
 assume that for some positive integers $n$, $q$ and $l$, $0 \neq \Delta \in |q K_{X_{Y^n}/Y^n} + f_{Y^n}^* l  H |$. Set $m:= q+1$.  Define then for every integer $n>0$ the Cartier module (which fact is proven in \autoref{lem:we_indeed_define_a_Cartier_module}).
\begin{equation*}
M_{\Delta} := S^0 f_{Y^n,*} (\sigma (X_{Y^n}, \Delta) \otimes \sO_X(K_{X_{Y^n}} + q K_{X_{Y^n}/Y^n} + f_{Y^n}^*lH)).
\end{equation*}
Here $H$ is regarded as a divisor on $Y^n$ via the natural $p^e$-linear isomorphism $Y \cong Y^n$ (or in other words via the identification $Y = Y^n$ obtained by forgetting the $k$-structures).
\end{notation}

According to \autoref{thm:openness_S_0}, $M_{\Delta}$ has rank at least as big as $S^0 \left(X_{\eta}, \sigma \left(X_{\eta}, \Delta_{\eta} \right) \otimes \sO_{X_{\eta}} \left(mK_{X_{\eta}} \right) \right)$. Hence our next task is to show that this is not zero. 

\begin{proposition}
\label{prop:non_vanishing_on_the_general_fiber}
With notation as above, $S^0 \left(X_{\eta}, \sigma \left(X_{\eta}, \Delta_{\eta} \right) \otimes \sO_{X_{\eta}} \left(mK_{X_{\eta}} \right) \right) \neq 0$.
\end{proposition}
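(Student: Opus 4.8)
The statement is about the single variety $X_\eta$, so the plan is to discard the family entirely. First I would unwind what $\Delta_\eta$ is: since $f_{Y^n}^* lH$ restricts to the trivial divisor on any fibre, $\Delta_\eta$ is an effective Cartier divisor on the integral Gorenstein variety $X_\eta$ (recall $\eta$ is a perfect field) with $\Delta_\eta \sim q K_{X_\eta}$, whence $\sO_{X_\eta}(m K_{X_\eta}) = \sO_{X_\eta}\bigl((q+1)K_{X_\eta}\bigr)$ is canonically identified with $\omega_{X_\eta}(\Delta_\eta) := \omega_{X_\eta} \otimes \sO_{X_\eta}(\Delta_\eta)$. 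Thus it suffices to show $S^0\bigl(X_\eta, \sigma(X_\eta, \Delta_\eta) \otimes \omega_{X_\eta}(\Delta_\eta)\bigr) \neq 0$, knowing $S^0(X_\eta, \omega_{X_\eta}) \neq 0$ from \autoref{notation:fibration} (the latter being the case $\Delta = 0$, $M = \omega_{X_\eta}$ of the construction, for which the map $\alpha$ of \autoref{lem:we_indeed_define_a_Cartier_module} is just the Grothendieck trace of Frobenius).

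The key point I want to establish is that multiplication by the canonical section $s \in H^0\bigl(X_\eta, \sO_{X_\eta}(\Delta_\eta)\bigr)$ of $\Delta_\eta$ carries $S^0(X_\eta, \omega_{X_\eta})$ into $S^0\bigl(X_\eta, \sigma(X_\eta,\Delta_\eta) \otimes \omega_{X_\eta}(\Delta_\eta)\bigr)$. To prove it I would fix $g = 1$ (legitimate, since $K_{X_\eta}$ and $\Delta_\eta$ are Cartier) and, writing $\Tr^e \colon F^e_{X_\eta,*}\omega_{X_\eta} \to \omega_{X_\eta}$ for the iterated Frobenius trace and $\alpha^e \colon F^e_{X_\eta,*}\omega_{X_\eta}(\Delta_\eta) \to \omega_{X_\eta}(\Delta_\eta)$ for the $e$-th iterate of the map $\alpha$ of \autoref{lem:we_indeed_define_a_Cartier_module} attached to $(X_\eta, \Delta_\eta)$ and $M = \omega_{X_\eta}(\Delta_\eta)$, show that the square with top row $\Tr^e$, bottom row $\alpha^e$, left column $F^e_{X_\eta,*}(\cdot s)$ and right column $\cdot s$ commutes for every $e$. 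Granting this: because $s$ is a global section of $\sO_{X_\eta}(\Delta_\eta)$, the map $F^e_{X_\eta,*}(\cdot s)$ induces a map on global sections, and the square then shows that $\cdot s$ sends $\im\bigl(H^0(\Tr^e)\bigr)$ into $\im\bigl(H^0(\alpha^e)\bigr)$ for every $e$; intersecting over $e \gg 0$ (via \autoref{prop:Cartier_module_stabilizes}, applied to $\omega_{X_\eta}$ and to the Cartier module of \autoref{lem:we_indeed_define_a_Cartier_module}) gives $\cdot s\bigl(S^0(X_\eta,\omega_{X_\eta})\bigr) \subseteq S^0\bigl(X_\eta,\sigma(X_\eta,\Delta_\eta) \otimes \omega_{X_\eta}(\Delta_\eta)\bigr)$. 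Since $X_\eta$ is integral and $s \neq 0$, multiplication by $s$ is injective on global sections, so $S^0(X_\eta,\omega_{X_\eta}) \neq 0$ forces the conclusion.

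The commutativity of that square is the only substantive step, and it is a local Grothendieck-duality bookkeeping computation. Both composites are $\sO_{X_\eta}$-linear maps into a line bundle, so it is enough to check equality on the dense smooth locus of $X_\eta$. There, choosing a local equation $s_0$ for $\Delta_\eta$, the map $\alpha^e$ is by construction $\Tr^e$ precomposed with the twist by the canonical section of $(p^e-1)\Delta_\eta$, that is, with multiplication by $s_0^{p^e-1}$; chasing a local section $s \cdot x$ of $F^e_{X_\eta,*}\omega_{X_\eta}(\Delta_\eta)$ through the projection-formula identifications feeds the trace an extra factor $s_0^{p^e}$, which the $p^{-e}$-linearity of $\Tr^e$ pulls out as a single $s_0$; this one factor precisely cancels the twist built into $\omega_{X_\eta}(\Delta_\eta)$, returning $s \cdot \Tr^e(x)$. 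I expect this bookkeeping — getting the correct power of $s$ on the source and matching the Grothendieck-duality twists so that the cancellation is exact — to be the main obstacle; everything else is formal.
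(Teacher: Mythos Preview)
Your proposal is correct and follows essentially the same approach as the paper. The paper picks $0\neq g\in S^0(X_\eta,\omega_{X_\eta})$ with preimages $g_e$ under $\Tr^e$, lets $h\in H^0(X_\eta,\omega_{X_\eta}^{m-1})$ be the section cutting out $\Delta_\eta$, and checks directly that $g_eh\cdot h^{p^e-1}=g_eh^{p^e}\xrightarrow{\Tr^e}gh$, so $gh\in S^0(X_\eta,\sigma(X_\eta,\Delta_\eta)\otimes\omega_{X_\eta}^m)$; your commutative square with $\cdot s$ on the verticals is exactly this element chase rephrased diagrammatically, and your local ``extra factor $s_0^{p^e}$ pulls out as $s_0$'' is the same $p^{-e}$-linearity step.
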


\begin{proof}
First note that by the assumptions of \autoref{notation:fibration}, $S^0(X_\eta, \omega_{X_\eta}) \neq 0$. So, choose an element $0 \neq g$ of $S^0(X_{\eta}, \omega_{X_{\eta}})$. By definition for every integer $e \geq 0$ there is a $g_e \in H^0(X_{\eta}, \omega_{X_{\eta}})$ such that $\Tr (g_e)= g$. 
Denote by $h$ the element of $H^0(X_{\eta}, \omega_{X_{\eta}}^{m-1})$ corresponding to $\Delta_{\eta}$. Then we claim that
\begin{equation}
\label{eq:non_vanishing_on_the_general_fiber:elemnt_of_S_0}
gh \in S^0  (X_{\eta}, \sigma (X_{\eta}, \Delta_{\eta}) \otimes \omega_{X_{\eta}}^m ).
\end{equation}
Indeed,  to show \autoref{eq:non_vanishing_on_the_general_fiber:elemnt_of_S_0}, it is enough to show that when $g_e h \in H^0 \left(X_{\eta},\omega_{X_{\eta}}^m \right)$ is multiplied with $h^{p^e -1}$ and then the trace map is applied to it, then its image is $gh$.  This is done in the following computation:
\begin{equation*}
g_e h \mapsto g_e h h^{p^e -1} = \underbrace{g_e h^{p^e} \mapsto^{\Tr^e} gh}_{\textrm{$\Tr^e$ is $1/p^e$-linear}} . 
\end{equation*} 
\end{proof}

As we already mentioned, combining \autoref{prop:non_vanishing_on_the_general_fiber} and \autoref{thm:openness_S_0} we obtain the following.

\begin{corollary}
\label{cor:Cartier_module_non_zero}
In the situation of \autoref{notation:the_Cartier_module}, $M_\Delta  \neq 0$.
\end{corollary}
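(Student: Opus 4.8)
The plan is to obtain the non-vanishing of $M_\Delta$ as an immediate consequence of two facts already established: the rank comparison of \autoref{thm:openness_S_0} and the non-vanishing on the generic fibre of \autoref{prop:non_vanishing_on_the_general_fiber}. Indeed, the sentence following \autoref{notation:the_Cartier_module} already records that, by \autoref{thm:openness_S_0}, the rank of $M_\Delta$ is at least $\dim S^0\!\left(X_\eta, \sigma(X_\eta, \Delta_\eta) \otimes \sO_{X_\eta}(m K_{X_\eta})\right)$, so all that remains is to combine this with \autoref{prop:non_vanishing_on_the_general_fiber}.

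Before invoking \autoref{thm:openness_S_0} I would verify that the data $\left(f_{Y^n} : X_{Y^n} \to Y^n,\ \Delta,\ M := \sO_{X_{Y^n}}(K_{X_{Y^n}} + q K_{X_{Y^n}/Y^n} + f_{Y^n}^* l H)\right)$ satisfies \autoref{notation:openness_S_0}: since $Y^n$ is regular and $X_{Y^n}$ is Gorenstein, $K_{X_{Y^n}}$ is Cartier and the generic fibre is $S_2$ and $G_1$; it is geometrically integral because $X_{\overline{\eta}}$ is integral; $\Delta$ is effective (it lies in a non-empty linear system) and $\bQ$-Cartier; and restricting to the generic fibre $X_\xi$ of $f_{Y^n}$ one computes $K_{X_\xi} + \Delta_\xi \sim (q+1)K_{X_\xi} = m K_{X_\xi} \sim M_\xi$, so \autoref{itm:openness_S_0:linear_equivalence} of \autoref{notation:openness_S_0} holds with $g = 1$. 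Moreover $M_\eta \cong \sO_{X_\eta}(m K_{X_\eta})$, so the object produced by \autoref{thm:openness_S_0} is literally the one appearing in \autoref{prop:non_vanishing_on_the_general_fiber}. Since the perfect closure $\eta$ of the generic point of $Y$ (equivalently of $Y^n$) factors through the generic point, it is a perfect point of every non-empty open $W \subseteq Y$, in particular of the $W$ produced by \autoref{thm:openness_S_0}; hence $\dim S^0\!\left(X_\eta, \sigma(X_\eta, \Delta_\eta) \otimes M_\eta\right)$ equals the generic value appearing in that theorem, and the rank bound above applies verbatim.

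Finally, by \autoref{prop:non_vanishing_on_the_general_fiber} the space $S^0\!\left(X_\eta, \sigma(X_\eta, \Delta_\eta) \otimes \sO_{X_\eta}(m K_{X_\eta})\right)$ is non-zero — the element $gh$ constructed there is a non-zero member of it — so its dimension is at least $1$. Combined with the rank bound, this forces $M_\Delta$ to have positive rank, hence $M_\Delta \neq 0$. There is essentially no obstacle in this argument: the only points requiring care are the bookkeeping that identifies $M_\eta$ with $\sO_{X_\eta}(m K_{X_\eta})$ (so that the two previously established statements speak about the same object) and the trivial observation that $\eta$ lies in the open locus $W$ of \autoref{thm:openness_S_0}; both are routine.
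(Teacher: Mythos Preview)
Your proof is correct and follows precisely the same approach as the paper, which simply states that the corollary follows by combining \autoref{prop:non_vanishing_on_the_general_fiber} with \autoref{thm:openness_S_0}. Your additional verification that the data satisfy \autoref{notation:openness_S_0} and that $M_\eta \cong \sO_{X_\eta}(mK_{X_\eta})$ is useful bookkeeping that the paper leaves implicit.
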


\begin{proposition}
\label{prop:effective_induction_step}
In the situation of \autoref{notation:the_Cartier_module}, $|(q+1)K_{X_{Y^n}/Y^n} + f_{Y^n}^*(l+2)H| \neq 0$.
\end{proposition}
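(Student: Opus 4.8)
The plan is to exploit the non-vanishing of the Cartier module $M_\Delta$ from \autoref{cor:Cartier_module_non_zero} together with the global generation statement \autoref{thm:Cartier_module_globally_generated}, in the crucial form that $M_\Delta$ becomes globally generated after twisting by a line bundle that depends only on $Y$ (not on $n$, $q$ or $l$), and then to absorb that bounded twist into the extra ``$+2H$'' occurring in the conclusion. The non-trivial input is the uniform bound on the twist; the rest is bookkeeping with adjunction, the projection formula, and effectivity of sums of very ample divisors.

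First I would rewrite the line bundle defining $M_\Delta$. By adjunction for the Gorenstein fibration $f_{Y^n}\colon X_{Y^n}\to Y^n$ (as used in the lemma following \autoref{notation:fibration}) one has $\sO_{X_{Y^n}}\!\left(K_{X_{Y^n}}+qK_{X_{Y^n}/Y^n}+f_{Y^n}^*lH\right)\cong L$, where $L:=\sO_{X_{Y^n}}\!\left((q+1)K_{X_{Y^n}/Y^n}+f_{Y^n}^*(K_{Y^n}+lH)\right)$. Thus $M_\Delta=S^0 N$ for the coherent Cartier module $N:=f_{Y^n,*}\!\left(\sigma(X_{Y^n},\Delta)\otimes L\right)$ produced in \autoref{lem:we_indeed_define_a_Cartier_module}, and, being the stabilized image of the iterated trace maps landing in $f_{Y^n,*}L$, it is in particular a subsheaf $M_\Delta\subseteq f_{Y^n,*}L$. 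By \autoref{cor:Cartier_module_non_zero} it is nonzero.

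The main step is to apply \autoref{thm:Cartier_module_globally_generated} to $N$ on the $d$-dimensional projective scheme $Y^n$, taking both auxiliary line bundles there to be $\sO_{Y^n}(A)$ (which is ample and globally generated, $A$ being very ample). This yields that $M_\Delta\otimes\sO_{Y^n}((d+1)A)=S^0 N\otimes\sO_{Y^n}(A)^{\otimes d}\otimes\sO_{Y^n}(A)$ is globally generated; since $M_\Delta\neq 0$ it then has a nonzero global section. Pushing this section through the inclusion $M_\Delta\otimes\sO_{Y^n}((d+1)A)\hookrightarrow f_{Y^n,*}L\otimes\sO_{Y^n}((d+1)A)\cong f_{Y^n,*}\!\left(L\otimes f_{Y^n}^*\sO_{Y^n}((d+1)A)\right)$ (projection formula) gives $\bigl|(q+1)K_{X_{Y^n}/Y^n}+f_{Y^n}^*(K_{Y^n}+lH+(d+1)A)\bigr|\neq 0$. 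The point to stress is that the twist used is exactly $(d+1)A$, a divisor on $Y$ independent of $n$, $q$, $l$; this is precisely why \autoref{notation:fibration} was arranged with $H-K_Y-(d+1)A$ very ample.

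Finally I would absorb the twist. Since $k$ is perfect, $K_{Y^n}=K_Y$ as divisors on the underlying scheme, so $2H-K_{Y^n}-(d+1)A=H+(H-K_Y-(d+1)A)$ is a sum of two very ample divisors and hence is linearly equivalent to an effective divisor $E$. Multiplying the section found above by the (nonzero) pullback section $f_{Y^n}^*(1_E)$ cut out by $E$ produces a nonzero section of $\sO_{X_{Y^n}}\!\left((q+1)K_{X_{Y^n}/Y^n}+f_{Y^n}^*(K_{Y^n}+lH+(d+1)A+E)\right)$ — nonzero because $X_{Y^n}$ is integral — and $K_{Y^n}+lH+(d+1)A+E\sim(l+2)H$. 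Hence $\bigl|(q+1)K_{X_{Y^n}/Y^n}+f_{Y^n}^*(l+2)H\bigr|\neq 0$, as desired. The hard part is the middle step: it relies on the fact that $M_\Delta$, being already the stabilized image $S^0 N$ of a Cartier module, is globally generated after a twist that is uniform in $n$, $q$, $l$; everything after that is formal.
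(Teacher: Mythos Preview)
Your proof is correct and follows essentially the same approach as the paper: apply \autoref{thm:Cartier_module_globally_generated} to the nonzero Cartier module $M_\Delta$, obtain a nonzero section after a bounded twist, and then absorb that twist using the choice of $H$ in \autoref{notation:fibration}. The only cosmetic difference is that the paper twists by $H$ directly (asserting $M_\Delta\otimes\sO_{Y^n}(H)$ is globally generated) and then adds the effective $H-K_{Y}$, whereas you twist by $(d+1)A$ and add the effective $2H-K_Y-(d+1)A=H+(H-K_Y-(d+1)A)$; your version has the virtue of making the application of \autoref{thm:Cartier_module_globally_generated} completely transparent, since taking both auxiliary bundles equal to $A$ requires no further ampleness check.
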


\begin{proof}
By \autoref{thm:Cartier_module_globally_generated} and the choice of $H$ in \autoref{notation:fibration},   
\begin{equation*}
M_\Delta \otimes \sO_{Y^n}( H)
\cong \sO_{Y^n}((l + 1) H + K_{Y^n}) \otimes S f_{Y^n,*} \Big(\sigma \big(X_{Y^n}, \Delta \big) \otimes \sO_X \big(mK_{X_{Y^n}/Y^n} \big) \Big) 
\end{equation*}
is globally generated (the above isomorphism follows straight from the projection formula). 
Hence, we may choose a non-zero global section $t$ of $M_\Delta \otimes \sO_{Y^n}( H)$. Since
\begin{equation*}
M_\Delta \otimes \sO_{Y^n}( H) \subseteq  \sO_{Y^n}((l + 1) H + K_{Y^n}) \otimes f_{Y^n,*} \omega_{X_{Y^n}/Y^n}^m ,
\end{equation*}
 this induces a non-zero section of   $m K_{X_{Y^n}/Y^n} + f_{Y^n}^* ((l + 1) H + K_{Y^n})$ over $X_{Y^n}$. In particular, 
\begin{equation*}
|mK_{X_{Y^n}/Y^n} +  f_{Y^n}^*(l + 1) H + f_{Y^n}^* K_{Y^n} | \neq 0.
\end{equation*}
However, then since $H - K_Y$ is effective, 
$|m K_{X_{Y^n}/Y^n} +  f_{Y^n}^*(l + 2) H| \neq 0$ as well.
\end{proof}

\begin{proof}[Proof of \autoref{thm:effective_general_fiber_implies_inf_is_zero}]
Since the statement is invariant under scaling $L$, we may assume we are in the situation of \autoref{notation:fibration} and we may replace $L$ by $H$. Choose positive integers $a$ and $b$ and an effective divisor $|b K_{X/Y} + f^* a  H | \ni \Gamma \neq 0$. Such $a$, $b$ and $\Gamma$ exist by \autoref{lem:generic_Kodaira_dimension_behavior_by_an_ample_twist_from_base} and by the assumption of \autoref{notation:fibration} that $\omega_{X_\eta}$ has a section. Then $\Gamma_{Y^n} \in |b K_{X_{Y^n}/Y^n} + f_{Y^n}^* a p^n H |$. Using \autoref{prop:effective_induction_step} we see that $|(b+r)  K_{X_{Y^n}/Y^n} + f_{Y^n}^* (a p^n + 2r)  H | \neq 0$  for every integer $r \geq 0$. Hence by \autoref{lem:effectivity_and_Frobenius_pullbacks}, for every integer $n,r >0$,
\begin{equation*}
\kappa \left(E_{r,n} :=(b+r)  K_{X/Y} + f^* \left(a  + \frac{2r+1}{p^n} \right)  H \right) \geq 0.
\end{equation*}
Now, fix $n:=2v$ and $r:=p^v$. Then we see that
\begin{multline*}
\lim_{v \to \infty}  \frac{1}{b+p^v} E_{p^v,2v} 
= \lim_{v \to \infty} \frac{1}{b+p^v} \left(  (b+p^v)  K_{X/Y} + f^* \left(a  + \frac{2p^v+1}{p^{2v}} \right)  H  \right)
\\ = \lim_{v \to \infty}   K_{X/Y} + f^* \left(\frac{a}{b+p^v}  + \frac{2p^v+1}{p^{2v}(b+p^v)} \right)  H  = K_{X/Y}
\end{multline*}
Hence $t =0$ indeed. 
\end{proof}

\begin{proof}[Proof of \autoref{cor:subadditivity_of_Kodaira_dimension}]
Fix any ample Cartier divisor $L$ on $Y$. Since $K_Y$ is big, there is a rational number $\varepsilon>0$ and an integer $r>0$ such that $\varepsilon r$ is an integer and $rK_Y \sim r \varepsilon L + E$ for some effective Cartier divisor $E$ on $Y$. 
Further according to \autoref{thm:effective_general_fiber_implies_inf_is_zero} and \autoref{lem:from_effective_to_big}, by possibly multiplying $r$ and scaling $E$ accordingly,  there is also an effective Cartier divisor $\Gamma$ on $X$, such that $\Gamma \sim rK_{X/Y} + \varepsilon r f^* L$ and $\kappa(\Gamma) \geq \kappa(K_{X_\eta}) + \dim Y$. Therefore, 
\begin{equation}
\label{eq:subadditivity_of_Kodaira_dimension:linear_equivalence}
 r K_X \sim  r K_{X/Y} + r f^* K_Y \sim \left(\Gamma - \varepsilon r f^* L \right)  +  \left( r \varepsilon f^* L + f^* E \right) = \Gamma +   f^* E .
\end{equation}
Since  $E$ is effective, the following computation concludes our proof. 
\begin{equation*}
\kappa(K_X) = \kappa(\Gamma + f^* E) \geq \kappa(\Gamma)   \geq \kappa(K_{X_\eta}) + \dim Y
\end{equation*}
\end{proof}

\section{Weak-positivity}
\label{sec:weak_positivity}

Here we show a weak positivity statement, \autoref{thm:weak_positivity}, that is not needed for the main statements of the paper, but it is philosophically closely connected as explained after the statement of \autoref{cor:subadditivity_of_Kodaira_dimension}. Note that \autoref{thm:weak_positivity} is, according to the best knowledge of the author, the first positive characteristic variant of the famous weak-positivity theorem of Viehweg \cite[Thm 4.1]{Viehweg_Weak_positivity}.

\begin{proposition}
\label{prop:weak_positivity}
Let $f : X \to Y$ be a fibration with $X$ Gorenstein and $Y$ regular. Then
\begin{equation*}
S^0 f_{Y^{n},*} \omega_{X_{Y^n}/Y^{n}} \subseteq \left(S^0 f_*   \omega_{X/Y} \right)_{Y^{n}}
\end{equation*}
as subsheaves of $(f_*(M))_{Y^{n}}$. Furthermore, 
\begin{equation*}
\mathrm{Tr}^{n} F_{Y,*}^{n} S^0 f_{Y^{n},*} \omega_{X_{Y^n}/Y^{n}} = S^0 f_*  \omega_{X/Y}.
\end{equation*} 

\end{proposition}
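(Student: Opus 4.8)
\textbf{Proof proposal for \autoref{prop:weak_positivity}.}
The plan is to reduce everything to the behavior of the Grothendieck trace of the relative Frobenius and to the stabilization result \autoref{prop:stabilization}. First I would set up the Cartier module structure: take $\Delta = 0$, $M = \omega_{X/Y}$, and $g = 1$ in \autoref{lem:we_indeed_define_a_Cartier_module} (after noting that since $X$ is Gorenstein, $(p-1)K_X$ is Cartier and trivially linearly equivalent to $(p-1)\omega_{X/Y}$ up to the pullback of $(p-1)K_Y$, which is harmless because we may work locally on $Y$ or simply absorb it), so that $(f_*\omega_{X/Y}, f_*(\alpha), 1)$ is a Cartier module on $Y$, where $\alpha : F_*\omega_{X/Y} \to \omega_{X/Y}$ is induced by the Grothendieck trace of $F_X$ (equivalently of $F_{X/Y}$ after twisting). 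By definition $S^0 f_* \omega_{X/Y}$ is the stable image $\bigcap_e \im (f_*(\alpha)^e)$.

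Next I would identify $S^0 f_{Y^n,*}\omega_{X_{Y^n}/Y^n}$ with the pullback along $F_Y^n$ of the same data. The point is that $S^0 f_{Y^n,*}\omega_{X_{Y^n}/Y^n}$ is, by \autoref{notation:openness_S_0} and the surrounding discussion (with $\Delta = 0$, $M = \omega$), exactly $S_{0,n} f_* \omega_{X/Y}$ for $n \gg 0$ in the notation of \eqref{eq:S_0_definition}, i.e. the image of $f_{Y^n,*} F_{X^n/Y^n,*}\omega_{X/Y} \to f_{Y^n,*}\omega_{X_{Y^n}/Y^n} \cong F_Y^{n,*} f_*\omega_{X/Y}$, and this stabilizes. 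By \autoref{prop:stabilization} there is an open $V \subseteq Y$ over which $F_{V^{n}}^{1,*}(S_{0,n}f_*\omega|_{V^n}) = S_{0,n+1}f_*\omega|_{V^{n+1}}$, so over $V$ the subsheaf $S_{0,n}f_*\omega = F_V^{n,*}(S_{0,0}f_*\omega)$; but $S_{0,e}f_*\omega$ for $e \gg 0$ over the generic point is $S^0$ of the generic fiber, and tracking back, $S^0 f_*\omega_{X/Y}$ over $V$ is precisely $S_{0,e}f_*\omega|_V$ for $e \gg 0$. Since $S^0 f_{Y^n,*}\omega_{X_{Y^n}/Y^n} = S_{0,e}f_{Y^n,*}\omega_{X_{Y^n}/Y^n}$ for $e \gg 0$, and pulling back along $F_Y^n$ is compatible with iterating, one gets the containment $S^0 f_{Y^n,*}\omega_{X_{Y^n}/Y^n} \subseteq F_Y^{n,*}(S^0 f_*\omega_{X/Y}) = (S^0 f_*\omega_{X/Y})_{Y^n}$ over $V$, and then globally because both sides are saturated/coherent subsheaves of $(f_*\omega_{X/Y})_{Y^n}$ agreeing on a dense open — more precisely, the containment of sheaves follows from \autoref{prop:stabilization} together with \cite[Proposition 6.6]{Patakfalvi_Schwede_Zhang_F_singularities_in_families}, which gives $F_Y^{n,*}(S_{0,e}f_*\omega) \supseteq S_{0,n+e}f_*\omega$ for all $e$, not just generically.

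For the second assertion, observe that $\mathrm{Tr}^n F_{Y,*}^n$ applied to $S^0 f_{Y^n,*}\omega_{X_{Y^n}/Y^n}$ is exactly $f_*(\alpha)^n$ applied to $F_{Y,*}^n S^0 f_{Y^n,*}\omega_{X_{Y^n}/Y^n}$: this is the content of the identification in \autoref{lem:we_indeed_define_a_Cartier_module} that the chain \eqref{eq:we_indeed_define_a_Cartier_module:one_step} known from $F$-singularities agrees with the Cartier-module iteration. Since $S^0 f_{Y^n,*}\omega_{X_{Y^n}/Y^n}$ is, for $n \gg 0$, the stabilized image, its further image under $f_*(\alpha)^n$ is still $S^0 f_* \omega_{X/Y}$ by \autoref{lem:Cartier_module_stable_surjective} (the restriction of $\tau = f_*(\alpha)$ to the stable submodule is surjective onto it). Concretely: for $n$ large enough, $S^0 f_{Y^n,*}\omega_{X_{Y^n}/Y^n} = \im(f_*(\alpha)^N)$ for all $N \geq n$ after base change, and applying $\mathrm{Tr}^n = f_*(\alpha)^n \circ F_{Y,*}^n$ lands surjectively on $\im(f_*(\alpha)^{2n}) = S^0 f_*\omega_{X/Y}$. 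The reverse inclusion $S^0 f_*\omega_{X/Y} \subseteq \mathrm{Tr}^n F_{Y,*}^n S^0 f_{Y^n,*}\omega_{X_{Y^n}/Y^n}$ is the surjectivity in \autoref{lem:Cartier_module_stable_surjective}; the forward inclusion is just that $\mathrm{Tr}^n$ lands in $\im(f_*(\alpha)^n) \supseteq S^0$ combined with the containment from the first part.

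The main obstacle I anticipate is bookkeeping the difference between the generic statement produced by \autoref{prop:stabilization} and the statement of containment \emph{as subsheaves} over all of $Y^n$; this is where \cite[Proposition 6.6]{Patakfalvi_Schwede_Zhang_F_singularities_in_families} is essential, since it upgrades ``$F_Y^{n,*}(S_{0,e}f_*\omega) \supseteq S_{0,n+e}f_*\omega$ generically'' to a genuine inclusion of coherent subsheaves, and one must be careful that $S^0$ on the base equals $S_{0,e}f_*\omega$ for $e$ large (which uses \autoref{prop:Cartier_module_stabilizes} that the descending chain stabilizes). A secondary, purely notational obstacle is matching the twist by $(p-1)K_Y$: working with $\omega_{X/Y}$ rather than $\omega_X$ means the trace $\alpha$ naturally targets $\omega_{X/Y} \otimes (\text{something involving }K_Y)$, and one checks this extra factor is trivial because $f_*(\alpha)$ is defined precisely so that it is the relative-Frobenius trace, i.e. the $K_Y$ contributions cancel between source and target of \eqref{eq:we_indeed_define_a_Cartier_module:one_step}.
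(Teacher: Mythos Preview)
The paper does not give a proof here at all: it cites \cite[Proposition~6.36]{Patakfalvi_Schwede_Zhang_F_singularities_in_families} verbatim and only records the two inputs that make that external argument apply, namely that $\omega_{X/Y}$ is compatible with base change (because the morphism is Gorenstein) and that $S^0 f_*\omega_{X/Y}$ stabilizes by \autoref{prop:Cartier_module_stabilizes}. So you are attempting considerably more than the author does, and the ingredients you reach for---the Cartier-module structure from \autoref{lem:we_indeed_define_a_Cartier_module} and the surjectivity of \autoref{lem:Cartier_module_stable_surjective} for the trace identity---are indeed the right ones.

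That said, your detour through \autoref{prop:stabilization} and the sheaves $S_{\Delta,e}$ of \autoref{sec:relative_Cartier_modules} introduces a real conflation. The object $S^0 f_{Y^n,*}\omega_{X_{Y^n}/Y^n}$ is by definition the stable image of the Cartier module on $Y^n$ built from the \emph{absolute} Frobenius of $X_{Y^n}$, whereas $S_{0,e}f_*\omega_{X/Y}$ in \eqref{eq:S_0_definition} is the image of a single \emph{relative}-Frobenius trace and lives on $Y^e$; your sentence ``$S^0 f_{Y^n,*}\omega_{X_{Y^n}/Y^n}$ is \dots\ exactly $S_{0,n} f_*\omega_{X/Y}$ for $n\gg 0$'' identifies two objects defined by different Frobenii on different bases, and this is neither proved in the paper nor immediate. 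Sorting out the precise comparison between the absolute and relative pictures is essentially the content of the cited \cite[Proposition~6.36]{Patakfalvi_Schwede_Zhang_F_singularities_in_families}, so routing through Section~\ref{sec:relative_Cartier_modules} does not bypass that input. (Also, \autoref{prop:stabilization} only gives conclusions over a dense open $V$, and ``both sides agree on a dense open hence agree as subsheaves'' is not a valid step for arbitrary coherent subsheaves of a torsion-free sheaf.) The direct line, matching the cited reference, is to avoid Section~\ref{sec:relative_Cartier_modules} entirely: flatness of $F_Y^n$ gives $(S^0 f_*\omega_{X/Y})_{Y^n}=\im\big(F_Y^{n,*}\tau^e\big)$ for $e\gg 0$, and one checks that the Cartier structure $\tau_n$ on $f_{Y^n,*}\omega_{X_{Y^n}/Y^n}$ factors through $F_Y^{n,*}\tau$ via compatibility of the Grothendieck trace with flat base change---the Gorenstein hypothesis being exactly what supplies base-change compatibility of $\omega_{X/Y}$.
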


\begin{proof}
The proof is identical to that of \cite[Proposition 6.36]{Patakfalvi_Schwede_Zhang_F_singularities_in_families} using that for Gorenstein morphisms $\omega_{X/Y}$ is compatible with base-change and that $S^0 f_*  \omega_{X/Y}$ stabilizes by \autoref{prop:Cartier_module_stabilizes}.
\end{proof}

\begin{corollary}
\label{cor:rank_stabilization}
With notations as above, $r(n):=\rk S^0 f_{Y^{n},*} \omega_{X_{Y^n}/Y^{n}}$ is a (not necessarily strictly) decreasing function of $n$ and further if $S^0 f_* \omega_{X/Y} \neq 0$, then $r(n)>0$ for all $n>0$. Hence, there exists a greater than zero minimum of $r(n)$, which we denote by $r$. Let $n_0$ be the smallest  value of $n$ such that $r = r(n)$.
\end{corollary}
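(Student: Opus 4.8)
The plan is to deduce all three assertions from \autoref{prop:weak_positivity}, applied not only to $f$ itself but to each of its base changes along the iterates of the absolute Frobenius of the base. The first step is to observe that \autoref{prop:weak_positivity} applies verbatim with $f : X \to Y$ replaced by $f_{Y^n} : X_{Y^n} \to Y^n$, for every $n \geq 0$: exactly as in the lemma at the beginning of \autoref{sec:argument} (whose proof of the points in question uses only that $X$ is Gorenstein, $Y$ is regular, and $f$ is a fibration), $X_{Y^n}$ is again Gorenstein, $Y^n$ is again regular, and $f_{Y^n}$ is again a fibration. Feeding $f_{Y^n}$ into \autoref{prop:weak_positivity} and taking its own $m$-th Frobenius base change --- and using the identifications $\left(Y^n\right)^m = Y^{n+m}$ and $\left(X_{Y^n}\right)_{Y^{n+m}} = X_{Y^{n+m}}$ --- one obtains, for all $n, m \geq 0$, an inclusion
\begin{equation*}
S^0 f_{Y^{n+m},*}\, \omega_{X_{Y^{n+m}}/Y^{n+m}} \ \subseteq\ \left( S^0 f_{Y^{n},*}\, \omega_{X_{Y^{n}}/Y^{n}} \right)_{Y^{n+m}}
\end{equation*}
of subsheaves of $\left( f_{Y^n,*}\,\omega_{X_{Y^n}/Y^n} \right)_{Y^{n+m}}$, together with surjectivity of the trace map
\begin{equation*}
\mathrm{Tr}^{m}\, F_{Y^n,*}^{m}\, S^0 f_{Y^{n+m},*}\, \omega_{X_{Y^{n+m}}/Y^{n+m}} \ \longrightarrow\ S^0 f_{Y^n,*}\, \omega_{X_{Y^n}/Y^n}.
\end{equation*}

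Granting this, monotonicity is immediate from the case $m = 1$ of the displayed inclusion: the morphism $Y^{n+1} \to Y^n$ is flat (the absolute Frobenius of a regular scheme is flat), so pullback along it preserves ranks, and a subsheaf of a coherent sheaf on an integral scheme never has larger rank than the ambient sheaf; hence $r(n+1) \leq r(n)$. For positivity, assume $S^0 f_* \omega_{X/Y} \neq 0$ and take the trace surjectivity with $n = 0$ and $m$ arbitrary: since its target is nonzero, so is its source, forcing $S^0 f_{Y^m,*}\,\omega_{X_{Y^m}/Y^m} \neq 0$ for every $m$. I would then upgrade ``nonzero'' to ``positive rank'' via torsion-freeness: $\omega_{X_{Y^m}/Y^m}$ is a line bundle on the integral scheme $X_{Y^m}$ and $f_{Y^m}$ is dominant, so $f_{Y^m,*}\,\omega_{X_{Y^m}/Y^m}$ is torsion-free on $Y^m$, and a nonzero subsheaf of a torsion-free coherent sheaf on an integral scheme has nonzero stalk at the generic point, i.e.\ positive rank. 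Thus $r(m) > 0$ for all $m > 0$.

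The remaining assertion is then formal: $\{r(n)\}_{n \geq 0}$ is a non-increasing sequence of non-negative integers, hence eventually constant; its eventual value is the minimum $r$, and $n_0$ is by definition the least index attaining it. Under the hypothesis $S^0 f_* \omega_{X/Y} \neq 0$ this minimum is strictly positive by the previous paragraph. The one place that deserves real care is the first step --- verifying that the Frobenius base change of \autoref{prop:weak_positivity} yields precisely the nested inclusions and trace surjectivities above, i.e.\ matching up the various relative Frobenius morphisms and the iterated-Frobenius identifications; once that bookkeeping is settled, everything else is a rank count.
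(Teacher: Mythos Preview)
Your argument is correct and is precisely the approach the paper intends: the corollary is stated without proof, as an immediate consequence of \autoref{prop:weak_positivity} applied to each base change $f_{Y^n}$, and you have spelled out exactly those details (the nested inclusions giving monotonicity of $r(n)$, the trace surjectivity giving positivity, and the formal conclusion).
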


We give now the definition of weak-positivity, which is the slightly weaker version used in \cite[Notation, (vii)]{Kollar_Subadditivity_of_the_Kodaira_dimension}, rather than that in  \cite[Definition 2.11]{Viehweg_Quasi_projective_moduli}. Note that this version is also called pseudo-effectivity in \cite[Proposition 6.3]{Demailly_Peternell_Schneider_Pseudo_effective_line_bundles_on_compact_Kahler_manifolds}.

\begin{definition}[Definition 2.11 of \cite{Viehweg_Quasi_projective_moduli}]
\label{defn:weakly_positive}
Let $\sF$ be a torsion-free  sheaf on a quasi-projective (over a field $k$) variety $V$. Then $\sF$ is  \emph{weakly positive}, if for a fixed (or equivalently every: \cite[Lemma 2.14.a]{Viehweg_Quasi_projective_moduli}) ample line bundle $\sH$ for every integer $a>0$ there is an integer $b>0$ such that $S^{[ ab]} (\sF) \otimes  \sH^b$ is generically globally generated.
\end{definition}

\begin{theorem}
\label{thm:weak_positivity}
If $f : X \to Y$ is a fibration with $X$ Gorenstein and $Y$ regular, then  $S^0 f_{Y^{n},*} \omega_{X_{Y^{n}}/Y^{n}}$ is weakly-positive for every integer $n \geq n_0$ (where $n_0$ is as defined in \autoref{cor:rank_stabilization}). Further, $S^0 f_* \omega_{X/Y} \neq 0$ or $S^0(X_\eta, \omega_\eta) \neq 0$, where $\eta$ is the perfect general point, then  $S^0 f_{Y^{n},*} \omega_{X_{Y^{n}}/Y^{n}} \neq 0$.
\end{theorem}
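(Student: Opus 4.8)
The plan is to treat the two assertions separately, deducing the non‑vanishing clause from \autoref{cor:rank_stabilization} and \autoref{thm:openness_S_0}, and proving weak positivity by combining the rank–stabilization and the global generation of Cartier modules to control \emph{all} Frobenius pullbacks of $\sF:=S^0 f_{Y^n,*}\omega_{X_{Y^n}/Y^n}$ by one fixed twist, and then reading this on $\bP(\sF)$.

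\emph{Non‑vanishing.} I would reduce to $S^0 f_*\omega_{X/Y}\neq 0$. If that is the hypothesis there is nothing to prove; if instead $S^0(X_\eta,\omega_\eta)\neq 0$, I would apply \autoref{thm:openness_S_0} with $\Delta=0$ and $M=\omega_{X/Y}$ (a line bundle since $X$ is Gorenstein and $Y$ regular) exactly as in the deduction of \autoref{cor:Cartier_module_non_zero}: its addendum gives $\rk S^0 f_*\omega_{X/Y}\geq \dim S^0(X_\eta,\omega_\eta)>0$. In either case \autoref{cor:rank_stabilization} then yields $r(n)>0$, i.e. $S^0 f_{Y^n,*}\omega_{X_{Y^n}/Y^n}\neq 0$, for all $n>0$. (If $S^0 f_*\omega_{X/Y}=0$ then $S^0 f_{Y^n,*}\omega_{X_{Y^n}/Y^n}=0$ by \autoref{prop:weak_positivity} and the weak‑positivity claim is vacuous, the zero sheaf being weakly positive.)

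\emph{Weak positivity, step one.} Fix $n\geq n_0$, set $d:=\dim Y$ and $\sF:=S^0 f_{Y^n,*}\omega_{X_{Y^n}/Y^n}$, a torsion‑free Cartier module on $Y^n$ (\autoref{lem:we_indeed_define_a_Cartier_module}). Fix once and for all an ample globally generated line bundle $A$ and an ample line bundle $H$ on $Y$, and put $N:=A^{d}\otimes H$. I would first show that $(F_Y^m)^*\sF\otimes N$ is generically globally generated for \emph{every} $m\geq 0$, with this single $N$. Indeed, applying \autoref{prop:weak_positivity} to the fibration $f_{Y^n}:X_{Y^n}\to Y^n$ (again with Gorenstein total space and regular base) produces an inclusion $\sF_{n+m}:=S^0 f_{Y^{n+m},*}\omega_{X_{Y^{n+m}}/Y^{n+m}}\hookrightarrow (F_Y^m)^*\sF$; since $n,n+m\geq n_0$ both sheaves have rank $r$ by \autoref{cor:rank_stabilization}, so the inclusion is an isomorphism over a dense open set. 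On the other hand $\sF_{n+m}$ is the $S^0$ of the Cartier module $f_{Y^{n+m},*}\omega_{X_{Y^{n+m}}/Y^{n+m}}$ on the $d$‑dimensional projective scheme $Y^{n+m}$, so $\sF_{n+m}\otimes N$ is globally generated by \autoref{thm:Cartier_module_globally_generated}; combining the two gives the claim.

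\emph{Weak positivity, step two.} Let $\pi:P:=\bP(\sF)\to Y^n$; by the formulation of weak positivity recalled before \autoref{defn:weakly_positive} (pseudo‑effectivity in the sense of \cite[Prop.~6.3]{Demailly_Peternell_Schneider_Pseudo_effective_line_bundles_on_compact_Kahler_manifolds}) it suffices to see that $\sO_P(1)$ is pseudo‑effective over a dense open of $Y^n$. For each $m$ the base change $\rho_m:P_m\to P$ of the finite flat surjective morphism $F_Y^m:Y^{n+m}\to Y^n$ identifies $P_m$ with $\bP((F_Y^m)^*\sF)$, with $\rho_m^*\sO_P(1)=\sO_{P_m}(1)$ and $\rho_m^*\pi^*N=\pi_m^*(N^{p^m})$. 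From step one, taking symmetric powers and pushing down along $\pi_m$, the line bundle $\sO_{P_m}(1)\otimes\pi_m^*N$ is generically semiample, hence so is every power; since $(\sO_{P_m}(1)\otimes\pi_m^*N)^{\otimes p^m}=\rho_m^*\!\big(\sO_P(p^m)\otimes\pi^*N\big)$, the latter is pseudo‑effective over a dense open, and descending along the finite surjective $\rho_m$ (via $\rho_{m*}\rho_m^*=(\deg\rho_m)\cdot\mathrm{id}$ on divisor classes) shows that $\sO_P(1)\otimes\tfrac{1}{p^m}\pi^*N$ is pseudo‑effective over a dense open. Letting $m\to\infty$ and using that the pseudo‑effective cone is closed, $\sO_P(1)$ itself is pseudo‑effective over a dense open, so $\sF$ is weakly positive.

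\emph{Main obstacle.} The substantive book‑keeping is hidden in the word ``generically'': one must arrange the dense open sets above to be compatible as $m$ varies (or check that the cited equivalence with pseudo‑effectivity is insensitive to shrinking them), together with the base‑change identity $\rho_m^*(\sO_P(p^m)\otimes\pi^*N)=(\sO_{P_m}(1)\otimes\pi_m^*N)^{\otimes p^m}$. The one conceptual input — the only place the hypotheses enter beyond \autoref{prop:weak_positivity} — is that \autoref{thm:Cartier_module_globally_generated} furnishes a twist $N$ depending only on $Y$ and not on $m$; without this uniformity one could not push $\sO_{\bP(\sF)}(1)$ into the closed pseudo‑effective cone.
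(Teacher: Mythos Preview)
Your non-vanishing paragraph and Step one match the paper. Step two, however, diverges: the paper never passes to $\bP(\sF)$ or invokes pseudo-effectivity of $\sO_{\bP(\sF)}(1)$. After obtaining (as you do) that $F^{m-n,*}\sF_n \otimes H^{2}$ is generically globally generated, it takes reflexive symmetric powers directly on $Y$: then $F^{m-n,*}S^{[a]}(\sF_n)\otimes H^{2a}$ is generically globally generated, and choosing $m$ with $2a\le p^{m-n}$ so is $F^{m-n,*}(S^{[a]}(\sF_n)\otimes H)\cong F^{m-n,*}S^{[a]}(\sF_n)\otimes H^{p^{m-n}}$. Descent along Frobenius is then carried out concretely via \autoref{prop:base_globally_generated}: because $\sHom_Y(F^{m-n}_*\sO_Y,H)$ is globally generated, sections of any $F^{m-n,*}\sG$ can be pushed down and composed with these maps to produce sections of $\sG\otimes H$ that still generate generically. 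This verifies \autoref{defn:weakly_positive} immediately, since $S^{[a]}(\sF_n)\otimes H^{2}$ is then generically globally generated for every $a$.

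Your route through $\bP(\sF)$ has two real gaps as written. First, you assume without proof that weak positivity in the sense of \autoref{defn:weakly_positive} is equivalent to pseudo-effectivity of $\sO_{\bP(\sF)}(1)$; the parenthetical remark before \autoref{defn:weakly_positive} is only terminological, and the cited reference treats vector bundles in characteristic zero---for a merely torsion-free $\sF$ in characteristic $p$ the dictionary between $S^{[ab]}(\sF)$ and $\pi_*\sO_{\bP(\sF)}(ab)$, and hence the passage back from $\sO_{\bP(\sF)}(1)$ to the definition, must be supplied separately. Second, your descent ``$\rho_{m*}\rho_m^*=(\deg\rho_m)\cdot\id$ on divisor classes'' is a statement about rational equivalence of cycles, not about effectivity or sections: it does not by itself show that $\rho_m^*L$ pseudo-effective implies $L$ pseudo-effective. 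That implication is true, but for a different reason (for instance, $\rho_m^*(L+\varepsilon A)$ big for $A$ ample forces $L+\varepsilon A$ big since $\rho_m$ is finite surjective, then let $\varepsilon\to 0$). The paper avoids both issues by never leaving $Y$ and by descending generic global generation through the explicit maps furnished by \autoref{prop:base_globally_generated}.
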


\begin{proof}
We use the definitions made in \autoref{cor:rank_stabilization}.
Fix a globally generated ample line bundle $A$ on $Y$ and let $H$ be a very ample Cartier divisor such that $H- K_Y - (\dim Y) A$ is also very ample. Further introduce
\begin{equation*}
 \sF_n:=  S^0 f_{Y^{n},*} \omega_{X_{Y^{n}}/Y^{n}}.
\end{equation*}
Fix an integer $a>0$ and choose $m > n$, such that $2a \leq p^{m-n}$. Then, by  \autoref{thm:Cartier_module_globally_generated}, $\sF_m \otimes H \otimes \omega_{Y^m}$ is globally generated on $Y^m$. However, then so is $\sF_m \otimes H^{\otimes 2}$. Therefore by the natural injection of full rank  $F^{m-n,*} \sF_{n} \otimes H^{2}$ is generically globally generated as well. Hence so is $F^{m-n,*} S^{[a]}(\sF_{n}) \otimes H^{2a}$ (note that since $F_Y$ is flat, the pullback of a reflexive sheaf by it is also reflexive). However than since $2a \leq p^{m-n}$, 
\begin{equation*}
F^{m-n,*} S^{[a]}(\sF_{n}) \otimes H^{p^{m-n}} \cong F^{m-n,*} (S^{[a]}(\sF_{n}) \otimes H) 
\end{equation*}
is also generically globally generated. Then by \autoref{prop:base_globally_generated}, $S^{[a]}(\sF_{n}) \otimes H^{\otimes 2}$ is generically globally generated on $Y^{n}$. Since this is true for all integers $a \geq 0$, $\sF_{n}$ satisfies the definition of weak-positivity.

To see the first case of the addendum, note that if $S^0 f_* \omega_{X/Y} \neq 0$ then $r >0$. To see the second case, use \autoref{thm:openness_S_0}.
\end{proof}

\appendix

\section{Kodaira-Iitaka dimension over arbitrary field}
\label{sec:Kodaira_Iitaka_dimension}

Recall that a variety is an integral, separated scheme over a field $k_0$. We are not using $k$ for the base-field, because that denotes a fixed perfect base-field. We definitely want to allow non-perfect base-fields in this section.

\begin{notation}
\label{notation:Kodaira_Iitaka}
Fix a proper variety $X$ over a field $k_0$ such that $H^0(X, \sO_X)= k_0$ and $L$ a line bundle on $X$. 
\end{notation}

\begin{definition}
\label{defn:Kodaira_Iitaka}
 In the situation of \autoref{notation:Kodaira_Iitaka}, with further also allowing $L$ to be $\omega_X$ if $X$ is $S_2$ and $G_1$,  we define
\begin{equation*}
R(X,L):= \bigoplus_{m \geq 0} H^0\left(X, L^{[m]} \right),
\end{equation*}
where $L^{[m]}:=(L^{\otimes m})^{\vee \vee}$. Note that $R(X,L)$ is a $\bZ$-graded integral domain, and the double dual can be disregarded unless $L= \omega_X$ and $\omega_X$ is not a line bundle. The field of  degree zero elements of the fraction field of $R(X,L)$ is denoted by $Q(X,L)$. Note that $Q(X,L)$ comes with a natural embedding into the fraction field $Q(X)$ of $X$. We define the Kodaira-Iitaka dimension $\kappa(L)$ of $L$ to be
\begin{equation*}
\kappa(L)= \left\{
\begin{array}{ll}
- \infty & \textrm{if } R(X,L) = k_0 \\
\trdeg_{k_0} Q(X,L) & \textrm{otherwise.}
\end{array}
\right.
\end{equation*}
Further define $S(R(X,L)_n)$ to be the $k_0$-algebra generated by the degree $n$ homogeneous part $R(X,L)_n$ of $R(X,L)$. Denote by $Q(X,L)_n$ the field of degree zero elements of the fraction field of $S(R(X,L)_n)$. We denote by $\phi_L$ the map defined by $L$.
\end{definition}

 We denote by $Q(Z)$ the field of rational functions for any variety $Z$, by $\phi_M$ the rational map induced by $M$ (the map on the complement of the base-locus of $M$ defined in \cite[Thm II.7.1]{Hartshorne_Algebraic_geometry}). Then the next lemma follows from the definitions. 

\begin{lemma}
\label{lem:Kodaira_dimension_map}
In the situation of \autoref{notation:Kodaira_Iitaka}, $Q(\phi_{mL}(X)) = Q(X,L)_m$ for every integer $m >0$.
\end{lemma}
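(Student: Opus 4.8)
*In the situation of \autoref{notation:Kodaira_Iitaka}, $Q(\phi_{mL}(X)) = Q(X,L)_m$ for every integer $m > 0$.*

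The plan is to unwind the two definitions and check they produce the same subfield of $Q(X)$. On the one hand, $Q(X,L)_m$ is by definition the degree-zero part of the fraction field of $S(R(X,L)_m)$, the $k_0$-subalgebra of $R(X,L)$ generated by $R(X,L)_m = H^0(X, L^{[m]})$; concretely, its elements are the ratios $s/t$ where $s, t$ are products of equally many elements of $H^0(X,L^{[m]})$ — equivalently, after clearing to a common tensor power, ratios $s/t$ with $s,t \in H^0(X, L^{[mk]})$ both lying in the image of $\Sym^k H^0(X,L^{[m]})$, for some $k \geq 1$. On the other hand, $\phi_{mL}$ is the rational map to projective space given by a basis $s_0, \dots, s_N$ of $H^0(X, L^{[m]})$, so its image $\phi_{mL}(X)$ is the closure of the locus of points $[s_0(x):\cdots:s_N(x)]$, and $Q(\phi_{mL}(X))$ is generated over $k_0$ by the ratios $s_i/s_j$ (restricted to where $s_j \neq 0$), viewed inside $Q(X)$.

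First I would fix a basis $s_0, \dots, s_N$ of $H^0(X, L^{[m]})$ and identify both fields as subfields of $Q(X)$ via the standard trivialization: choosing $s_0$ as a reference section, each $s_i/s_0$ is a genuine rational function on $X$. Then $Q(\phi_{mL}(X)) = k_0(s_1/s_0, \dots, s_N/s_0)$ essentially by \cite[Thm II.7.1]{Hartshorne_Algebraic_geometry} together with the fact that the function field of the image of a rational map is generated by the pulled-back coordinate functions. Meanwhile, a general element of $Q(X,L)_m$ is a ratio of two degree-$km$ monomials in the $s_i$ (for a common $k$); dividing numerator and denominator by $s_0^k$ exhibits it as a rational expression in the $s_i/s_0$, so $Q(X,L)_m \subseteq k_0(s_1/s_0, \dots, s_N/s_0)$. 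Conversely each generator $s_i/s_0$ is visibly a degree-zero element of the fraction field of $S(R(X,L)_m)$ (it is $s_i$ over $s_0$, both in $R(X,L)_m$), so $k_0(s_1/s_0,\dots,s_N/s_0) \subseteq Q(X,L)_m$. This gives equality.

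The only genuinely delicate points are bookkeeping rather than substance: one must be careful that the double dual $L^{[m]}$ causes no trouble (multiplication of sections $H^0(L^{[m]}) \otimes H^0(L^{[m]}) \to H^0(L^{[2m]})$ is still well defined on the $S_2$, $G_1$ scheme $X$ because sections of reflexive sheaves are determined on the codimension-$\leq 1$ locus where $L$ is a line bundle), and that since $H^0(X,\sO_X) = k_0$ the degree-zero part of the fraction field really is a field containing $k_0$ and the constructions are insensitive to rescaling the chosen basis. I expect no real obstacle here — the lemma is, as the text says, immediate from the definitions — so the write-up is essentially the two-inclusion argument above made precise.
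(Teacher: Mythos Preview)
Your proposal is correct and matches the paper's approach exactly: the paper gives no proof at all, merely asserting that the lemma ``follows from the definitions,'' and what you have written is precisely the standard two-inclusion unwinding of those definitions. Your care with the reflexive-power bookkeeping and the identification of both fields inside $Q(X)$ via the ratios $s_i/s_0$ is appropriate and fills in exactly the details the paper leaves implicit.
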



\begin{lemma} \cite[1.2.i]{Mori_Classification_of_higher_dimensional_varieties}
In the situation of \autoref{notation:Kodaira_Iitaka}, $Q(X,L)_m = Q(X,L)$ for every integer $m$ divisible enough.
\end{lemma}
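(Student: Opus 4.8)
The plan is to exploit the structure of $R(X,L)$ as a $\bZ_{\geq 0}$-graded integral domain that is finitely generated over $k_0$ in a suitable sense (more precisely, to reduce to the finitely generated case by passing to a subalgebra large enough to compute the transcendence degree of $Q(X,L)$). First I would dispose of the trivial case: if $\kappa(L) = -\infty$, then $R(X,L) = k_0$, every $R(X,L)_n$ is either $0$ or $k_0$, so $Q(X,L)_m = k_0 = Q(X,L)$ for all $m$ and there is nothing to prove. So assume $\kappa(L) \geq 0$, i.e. some $H^0(X, L^{[m_0]}) \neq 0$.

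The core of the argument is the following chain of observations. By definition $Q(X,L)$ is the degree-zero part of the fraction field of $R(X,L)$; since this is a finitely generated field extension of $k_0$ (of transcendence degree $\kappa(L)$), we may pick finitely many homogeneous elements $s_0, \dots, s_r \in R(X,L)$, say of degrees $e_0, \dots, e_r > 0$, whose ratios generate $Q(X,L)$ over $k_0$. Next I would use that for each fixed $i$, the element $s_i^{a}$ lies in $R(X,L)_{a e_i} = H^0(X, L^{[a e_i]})$ for every $a > 0$, and more importantly that products $s_{i_1} \cdots s_{i_N}$ of homogeneous generators of a common total degree $n$ lie in $S(R(X,L)_n)$ once $n$ is a common multiple of the $e_i$ that is moreover large enough for the multiplication maps $H^0(L^{[a]}) \otimes H^0(L^{[b]}) \to H^0(L^{[a+b]})$ to carry the generators into degree $n$. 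The key point is that if $n$ is divisible by $\lcm(e_0, \dots, e_r)$ and is at least, say, $(r+1)\lcm(e_0, \dots, e_r)$, then each ratio $s_i/s_j$ can be rewritten as a ratio of two elements of $R(X,L)_n$: namely multiply numerator and denominator by a common monomial $s_0^{c_0} \cdots s_r^{c_r}$ of the appropriate complementary degree. Hence every generator of $Q(X,L)$ over $k_0$ already lies in $Q(X,L)_n$, giving $Q(X,L) \subseteq Q(X,L)_n$; the reverse inclusion $Q(X,L)_n \subseteq Q(X,L)$ is immediate since $S(R(X,L)_n) \subseteq R(X,L)$ as graded rings, hence their degree-zero fraction fields are nested.

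Combined with the geometric translation $Q(X,L)_m = Q(\phi_{mL}(X))$ from \autoref{lem:Kodaira_dimension_map}, this shows that $\phi_{mL}(X)$ and $\phi_{m'L}(X)$ become birational for all sufficiently divisible $m, m'$, which is the usual stabilization of the Iitaka fibration. The main obstacle I anticipate is purely bookkeeping: making precise "divisible enough" so that simultaneously (i) $n$ is a multiple of every $e_i$, (ii) $n$ is large enough that the needed complementary monomials exist inside $R(X,L)$, and (iii) this works uniformly — i.e. showing that once the inclusion $Q(X,L) \subseteq Q(X,L)_{n_1}$ holds for one good $n_1$, it persists for every multiple of $n_1$, which again follows because any element of $S(R(X,L)_{n_1})$ is, after multiplying by a monomial, an element of $S(R(X,L)_{kn_1})$. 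No vanishing theorems or characteristic-zero tools are needed here; it is a formal statement about finitely generated graded domains, which is exactly why the cited reference \cite[1.2.i]{Mori_Classification_of_higher_dimensional_varieties} suffices.
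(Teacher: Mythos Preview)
The paper does not supply its own proof of this lemma; it simply cites \cite[1.2.i]{Mori_Classification_of_higher_dimensional_varieties} and moves on. Your argument is correct and is essentially the standard one behind that citation: the key inputs are that $Q(X,L)$ is a finitely generated extension of $k_0$ (because it embeds in $Q(X)$), so finitely many homogeneous ratios $f_k/g_k$ of equal degree $d_k$ generate it, and then for any $n$ divisible by $\lcm(d_k)$ one rewrites $f_k/g_k = (f_k\cdot f_k^{n/d_k-1})/(g_k\cdot f_k^{n/d_k-1})$ as a ratio of elements of $R(X,L)_n$, giving $Q(X,L)\subseteq Q(X,L)_n$; the reverse inclusion is trivial.

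Two small points of precision worth tightening in a final write-up. First, your phrase ``finitely generated graded domains'' is potentially misleading: $R(X,L)$ itself need not be finitely generated as a $k_0$-algebra, and indeed you do not use that---only the finite generation of the \emph{field} $Q(X,L)$, which you correctly justify via the embedding into $Q(X)$. Second, the condition ``at least $(r+1)\lcm(e_0,\dots,e_r)$'' is unnecessary once you observe that multiplying numerator and denominator by a power of the numerator itself already lands you in degree $n$; divisibility by the $\lcm$ alone suffices. Neither point affects the validity of the argument.
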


\begin{corollary}
In the situation of \autoref{notation:Kodaira_Iitaka}, $Q(\phi_{mL}(X)) = Q(X,L)$ for every integer $m$ divisible enough. In particular, an equivalent definition of Kodaira-Iitaka dimension is
\begin{equation*}
\kappa(L)= \left\{
\begin{array}{ll}
- \infty & \textrm{if } h^0(X,mL)=0 \textrm{ for every } m>0 \\
\max\{\dim \left( \phi_{mL}(X) \right)|m \geq 0, m \in \bZ\} & \textrm{otherwise.}
\end{array}
\right.
\end{equation*}
Furthermore, in the second case, the maximum is attained for all $m$ divisible enough.
\end{corollary}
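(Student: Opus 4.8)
The plan is to obtain this statement directly from \autoref{lem:Kodaira_dimension_map} and the lemma \cite[1.2.i]{Mori_Classification_of_higher_dimensional_varieties} that immediately precedes it; no new geometric content is needed, only bookkeeping with the definitions in \autoref{defn:Kodaira_Iitaka}. First I would settle the identity $Q(\phi_{mL}(X)) = Q(X,L)$ for $m$ divisible enough: by \autoref{lem:Kodaira_dimension_map} one has $Q(\phi_{mL}(X)) = Q(X,L)_m$ for every $m>0$, and by \cite[1.2.i]{Mori_Classification_of_higher_dimensional_varieties} one has $Q(X,L)_m = Q(X,L)$ once $m$ is a multiple of a suitable fixed $m_0>0$; composing the two identifications gives the assertion.

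Next I would deduce the reformulation of $\kappa(L)$. If $R(X,L) = k_0$, then unwinding the definition of $R(X,L)$ gives $H^0(X, L^{[m]}) = 0$ for all $m>0$, i.e. $h^0(X, mL) = 0$ for all $m>0$, and both the original and the proposed definitions return $-\infty$; so I may assume $R(X,L) \neq k_0$. Whenever $h^0(X, mL)>0$, the image $\phi_{mL}(X)$ is a variety over $k_0$, so $\dim \phi_{mL}(X) = \trdeg_{k_0} Q(\phi_{mL}(X)) = \trdeg_{k_0} Q(X,L)_m$ by \autoref{lem:Kodaira_dimension_map}. Since $S(R(X,L)_m)$ is a graded $k_0$-subalgebra of $R(X,L)$, its degree-zero fraction field $Q(X,L)_m$ embeds into $Q(X,L)$, and therefore $\dim \phi_{mL}(X) \le \trdeg_{k_0} Q(X,L) = \kappa(L)$ for every such $m$. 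Combined with the first part, for $m$ divisible enough this inequality becomes an equality, which yields the displayed formula together with the addendum that the maximum is attained for all $m$ divisible enough.

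The only point requiring a little care --- the closest thing to an obstacle here --- is ensuring that the value $\kappa(L)$ is actually realized by some $\phi_{mL}$ that is \emph{defined}, i.e. by some $m$ with $h^0(X, mL) > 0$. Since $R(X,L) \neq k_0$ there is an $m_1 > 0$ with $h^0(X, m_1 L) > 0$; then $m_1 \mid m$ forces $h^0(X, mL) > 0$ by raising a nonzero section of $H^0(X, m_1 L)$ to a power, so taking $m$ to be a common multiple of $m_0$ and $m_1$ makes $\phi_{mL}$ defined and $Q(X,L)_m = Q(X,L)$ at the same time, giving $\dim \phi_{mL}(X) = \kappa(L)$. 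Everything else is a formal consequence of the definitions and \autoref{lem:Kodaira_dimension_map}.
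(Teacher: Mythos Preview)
Your proposal is correct and follows exactly the route the paper intends: the corollary is stated without proof in the paper, as an immediate consequence of combining \autoref{lem:Kodaira_dimension_map} with \cite[1.2.i]{Mori_Classification_of_higher_dimensional_varieties}, which is precisely what you do. Your additional care about ensuring $h^0(X,mL)>0$ so that $\phi_{mL}$ is defined, and your verification of the inequality $\dim\phi_{mL}(X)\le\kappa(L)$ for all $m$, are details the paper leaves implicit but which are straightforward and handled correctly in your write-up.
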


\begin{corollary}
\label{cor:Kodaira_Iitaka_dimension_base_extension}
In the situation of \autoref{notation:Kodaira_Iitaka}, if $k_0 \subseteq K$ is a field extension, $\kappa(L_K) = \kappa(L)$.
\end{corollary}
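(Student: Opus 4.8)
The plan is to reduce the equality $\kappa(L_K) = \kappa(L)$ to the statement that $H^0(X, L^{[m]})$ commutes with the base extension $k_0 \subseteq K$ and that transcendence degree is preserved. First I would handle the issue of the double dual: since $X$ is a proper variety over $k_0$ with $H^0(X,\sO_X)=k_0$ and $L^{[m]}$ is reflexive, one checks that for a flat base change $X_K = X \times_{\Spec k_0} \Spec K \to X$ one has $(L^{[m]})_K \cong (L_K)^{[m]}$; this is automatic when $L$ is a line bundle, and when $L = \omega_X$ it follows because $X_K$ is again $S_2$ and $G_1$ (these properties descend and ascend along the faithfully flat extension $k_0 \to K$) and $\omega_X$ is compatible with flat base change. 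Then flat base change for cohomology gives $H^0(X_K, (L_K)^{[m]}) \cong H^0(X, L^{[m]}) \otimes_{k_0} K$ for every $m \geq 0$, so $R(X_K, L_K) \cong R(X,L) \otimes_{k_0} K$ as graded $K$-algebras.

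Next I would observe that $R(X,L) \otimes_{k_0} K$ is still an integral domain: since $X_K$ is a variety (integrality ascends because $H^0(X,\sO_X)=k_0$ forces, via the corollary's hypothesis on $X_K$, that $X_K$ is integral — or more directly $R(X,L)$ embeds in the function field $Q(X)$ and $Q(X) \otimes_{k_0} K$ embeds in $Q(X_K)$, a field), the ring $R(X_K,L_K)$ is a domain, and comparing it with $R(X,L)\otimes_{k_0} K$ identifies the latter as a domain. In particular $R(X_K,L_K)=K$ if and only if $R(X,L)=k_0$, which settles the case $\kappa = -\infty$ on both sides simultaneously.

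In the remaining case I must compare $\trdeg_{k_0} Q(X,L)$ with $\trdeg_K Q(X_K,L_K)$. The field $Q(X_K,L_K)$ is the degree-zero part of $\Frac(R(X,L)\otimes_{k_0} K)$. Writing $R = R(X,L)$ and $\Frac(R) = k_0(Q(X,L))$-extension appropriately, the point is that for a domain $R$ finitely generated-in-each-degree over $k_0$, forming the degree-zero subfield of the fraction field commutes with the (flat, domain-preserving) base change $\otimes_{k_0} K$: a transcendence basis $t_1,\dots,t_r$ of $Q(X,L)$ over $k_0$, lifted to ratios of homogeneous elements of equal degree, remains algebraically independent over $K$ (algebraic independence is preserved under extension of the base field) and still generates $Q(X_K,L_K)$ up to algebraic extension (every element of $R\otimes_{k_0}K$ is a $K$-combination of elements of $R$, so every degree-zero ratio lies in an algebraic extension of $K(t_1,\dots,t_r)$). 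Hence $\trdeg_K Q(X_K,L_K) = r = \trdeg_{k_0} Q(X,L)$, giving $\kappa(L_K)=\kappa(L)$.

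The main obstacle I anticipate is the bookkeeping around the $\omega_X$ case: one must verify that $S_2$, $G_1$, integrality, and the normalization $H^0(X_K,\sO_{X_K})=K$ all behave well under the extension $k_0 \subseteq K$ (which need not be separable, since $k_0$ is allowed to be imperfect here), and that $\omega_{X_K}$ is genuinely the flat pullback of $\omega_X$ — this uses that $\Spec K \to \Spec k_0$ is flat so that the relative dualizing complex pulls back, together with the fact that on an $S_2$, $G_1$ scheme $\omega$ is determined in codimension one and extends by reflexivity. Once these compatibilities are in hand, the transcendence-degree comparison is formal.
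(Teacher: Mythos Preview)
Your argument is correct, but it takes a different route from the paper. The paper invokes the equivalent geometric characterization of $\kappa(L)$ established in the immediately preceding corollary: it observes by flat base change that $H^0(X_K,mL_K)\cong H^0(X,mL)\otimes_{k_0}K$, hence $\phi_{mL_K}$ is the base change of $\phi_{mL}$, so $\phi_{mL_K}(X_K)=\phi_{mL}(X)\times_{k_0}\Spec K$ and the dimensions agree. This is a two-line proof once the geometric reformulation is in hand. You instead stay with the original transcendence-degree definition and compare $Q(X,L)$ and $Q(X_K,L_K)$ directly, pushing a transcendence basis through the base change. Your route is more self-contained (it does not rely on the preceding corollary) but correspondingly longer; the paper's route is shorter precisely because the work has been front-loaded into the equivalent definition.

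Two minor remarks on your write-up. First, in Notation~\ref{notation:Kodaira_Iitaka} the sheaf $L$ is already assumed to be a line bundle, so the reflexive-hull compatibility for $\omega_X$ that you spend a paragraph on is not needed for this corollary. Second, both your proof and the paper's implicitly require $X_K$ to be a variety for $\kappa(L_K)$ to be defined; you acknowledge this, and in the paper's applications (the generic fiber of a fibration with $f_*\sO_X\cong\sO_Y$, or passage to an algebraic closure from an already geometrically integral variety) this is always satisfied.
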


\begin{proof}
By flat base-change $H^0(X_K,mL_K) \cong H^0(X, mL) \otimes_{k_0} K$. In patricular, since $k_0 \to K$ is faithfully flat, $\kappa(L)= - \infty$ if and only if $ \kappa(L_K) = - \infty$. So, assume that $\kappa(L) \neq \infty$. Then by the above flat base-change, $\phi_{mL_K}$ agrees with the base-change of $\phi_{mL}$. In particular then $\phi_{mL_K}(X_K) = \phi_{mL}(X) \otimes_{k_0} K$. Therefore $\dim \phi_{mL_K}(X_K) = \dim \phi_{mL}(X)$, which concludes our proof.
\end{proof}

\begin{lemma}
\label{lem:elimination_of_indeterminancies}
In the situation of \autoref{notation:Kodaira_Iitaka} assuming also that $X$ is normal and $h^0(mL) >0$, there is a birational morphism $g : Z \to X$ from a normal variety $Z$ over $k_0$ and a morphism $f : Z \to Y$, such that the following diagram commutes
\begin{equation*}
\xymatrix{
X \ar@{-->}[d]_{\phi_{mL}} & Z \ar[l]_g \ar[dl]^f \\ \
Y
}.
\end{equation*}
Furthermore if $m$ divisible enough then $Q(Y) = Q(X,L)= Q(X,L)_n$ for every integer such that $m | n$.
\end{lemma}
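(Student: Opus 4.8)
The plan is to resolve the indeterminacy of $\phi := \phi_{L^{[m]}} : X \dashrightarrow \bP$, where $\bP = \bP\bigl((H^0(X,L^{[m]}))^\vee\bigr)$ is a projective space since $h^0(L^{[m]})>0$, by a single blow-up, and then normalize. First I would let $\mathfrak b \subseteq \sO_X$ be the base ideal of the linear system $|L^{[m]}|$ — well-defined at least on the open locus where $L^{[m]}$ is a line bundle, whose complement has codimension $\geq 2$ since $X$ is normal, and extended to a coherent ideal on all of $X$ — and set $\pi : X' := \Bl_{\mathfrak b} X \to X$. Then $\pi^{-1}\mathfrak b\cdot\sO_{X'}$ is invertible, so the total transform of $|L^{[m]}|$ is base-point-free and defines a morphism $X' \to \bP$ extending $\phi\circ\pi$. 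Since $\mathfrak b$ is the unit ideal on $X\setminus\Bs|L^{[m]}|$, the map $\pi$ is an isomorphism over that nonempty open set; in particular $X'$ is a variety and $\pi$ is proper and birational.

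Next I would take $\nu : Z \to X'$ to be the normalization. As $X'$ is of finite type over the field $k_0$, hence Nagata, $\nu$ is finite and birational, $Z$ is a normal variety over $k_0$, and $g := \pi\circ\nu : Z \to X$ is the desired proper birational morphism from a normal variety. Let $Y$ be the reduced image of the composite $h : Z \to X' \to \bP$; being the image of the integral scheme $Z$ under a proper morphism, $Y$ is an integral closed subscheme of $\bP$, i.e.\ a variety over $k_0$, and $h$ factors through a morphism $f : Z \to Y$. Commutativity of the displayed triangle is exactly the equality $\phi\circ g = f$ of rational maps, which holds because $g$ is an isomorphism over $X\setminus\Bs|L^{[m]}|$ and $f$ restricted there agrees with the morphism computed by the base-point-free system $L^{[m]}$, namely with $\phi$.

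For the final assertion I would note that $Z$ is proper over $k_0$, so $Y = f(Z)$ is closed and contains the dense subset $\phi(X\setminus\Bs|L^{[m]}|)$; hence $Y = \overline{\phi_{L^{[m]}}(X)}$, and therefore $Q(Y) = Q(\phi_{L^{[m]}}(X)) = Q(X,L)_m$ by \autoref{lem:Kodaira_dimension_map}. By the corollary of \cite[1.2.i]{Mori_Classification_of_higher_dimensional_varieties} there is an $m_0 > 0$ with $Q(X,L)_m = Q(X,L)$ whenever $m_0 \mid m$; then for such an $m$ and any $n$ with $m \mid n$ one has $m_0 \mid n$, so $Q(X,L)_n = Q(X,L)$ as well, giving $Q(Y) = Q(X,L) = Q(X,L)_n$. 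The routine inputs are that blowing up the base ideal trivializes it and that the normalization of a variety over a field is finite; the one point requiring genuine care is the case $L=\omega_X$ not locally free, where one must work throughout with $L^{[m]}$ and perform the blow-up and normalization away from the bad codimension-$\geq 2$ locus before taking closures — this, together with the Nagata property of finite-type $k_0$-schemes, is exactly what lets the statement hold over an arbitrary, possibly imperfect, base field $k_0$.
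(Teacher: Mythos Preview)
Your proof is correct and essentially equivalent to the paper's, though the constructions look superficially different. The paper takes $Z$ to be the normalization of the main component of the closure of the graph of $\phi_{mL}$ in $X \times Y$, whereas you blow up the base ideal and then normalize; these are standard and isomorphic ways of resolving the indeterminacy of a rational map defined by a linear system, so the resulting $Z$, $g$, $f$ agree. Your handling of the addendum is in fact more explicit than the paper's: the paper just says it follows from \autoref{lem:Kodaira_dimension_map}, while you correctly spell out that one also needs the Mori lemma $Q(X,L)_m = Q(X,L)$ for $m$ divisible enough, and that $m\mid n$ then forces $Q(X,L)_n = Q(X,L)$.

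One minor remark: your final paragraph about the case $L=\omega_X$ not locally free is unnecessary here, since the lemma is stated ``in the situation of \autoref{notation:Kodaira_Iitaka}'', where $L$ is already assumed to be a line bundle; the extension to reflexive $\omega_X$ appears only in \autoref{defn:Kodaira_Iitaka} and is not invoked in this lemma. This does no harm to the argument.
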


\begin{proof}
Note that the addendum follows from \autoref{lem:Kodaira_dimension_map} as soon as we construct a commutative diagram as in the statement. For that let $Z$ be the normalization of the main component of the closure of the graph of $\phi_{mL}$ in $X \times Y$. Then $g: Z \to Y$ is automatically birational, because it is isomorphism over the domain of $\phi_{mL}$.
\end{proof}

\begin{lemma} \cite[Claim in the proof of 1.11]{Mori_Classification_of_higher_dimensional_varieties}
\label{lem:embedding}
With notation as in \autoref{lem:elimination_of_indeterminancies}, let $H$ be the  very ample divisor on $Y$ used to define $f$. Then there is an embedding $f^* \sO_X(H) \subseteq g^* L^m$.
\end{lemma}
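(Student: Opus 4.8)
The plan is to compare the two line bundles $f^*\sO_Y(H)$ and $g^*L^m$ on $Z$: they are canonically isomorphic over the locus where $\phi_{mL}$ is already a morphism, and the crux is to check that this isomorphism extends to an honest inclusion of sheaves on all of $Z$, i.e.\ that no ``pole'' appears along the exceptional locus.

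Concretely, I would first fix the sections $s_0,\dots,s_N$ of $L^m$ defining $\phi_{mL}$ and the hyperplane coordinates $x_0,\dots,x_N$ on $\bP^N\supseteq Y$ coming from the embedding attached to $H$, so that $f$ is exactly the morphism given by the sections $f^*x_0,\dots,f^*x_N$, which generate $\sM:=f^*\sO_Y(H)$ and have no common zero on $Z$. Set $U:=X\setminus\Bs|mL|$ and $W:=g^{-1}(U)$. By the construction of $Z$ in \autoref{lem:elimination_of_indeterminancies}, $g$ restricts to an isomorphism $W\xrightarrow{\sim}U$ under which $f|_W$ becomes $\phi_{mL}|_U$; since $\phi_{mL}^*\sO_Y(H)\cong L^m|_U$ with $\phi_{mL}^*x_i$ corresponding to $s_i|_U$, pulling back along $g|_W$ yields an isomorphism $\psi_W\colon \sM|_W\xrightarrow{\sim}g^*L^m|_W$ carrying $f^*x_i|_W$ to $g^*s_i|_W$.

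Next I would extend $\psi_W$ across $Z$. Since $Z$ is an integral scheme and $W$ is dense, $\psi_W$ extends uniquely to an injection $\psi\colon \sM\hookrightarrow g^*L^m\otimes_{\sO_Z}\sK_Z$ into the sheaf of rational sections of $g^*L^m$. Because $\sM$ is globally generated by $f^*x_0,\dots,f^*x_N$, its image $\psi(\sM)$ is the $\sO_Z$-subsheaf of $g^*L^m\otimes_{\sO_Z}\sK_Z$ generated by the $\psi(f^*x_i)$. But $\psi(f^*x_i)$ is a rational section of $g^*L^m$ that agrees with the honest global section $g^*s_i$ on the dense open $W$, hence $\psi(f^*x_i)=g^*s_i\in H^0(Z,g^*L^m)$. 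Therefore $\psi(\sM)$ is generated by pole-free sections of $g^*L^m$, so $\psi(\sM)\subseteq g^*L^m$, which is the desired inclusion $f^*\sO_Y(H)\subseteq g^*L^m$. (Equivalently, $\psi$ identifies $f^*\sO_Y(H)$ with the base-point-free subsheaf $g^*L^m(-E)$ of $g^*L^m$, where $E$ is the pullback to $Z$ of the divisorial part of $\Bs|mL|$.)

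The only genuinely delicate point is the last one: a priori the comparison isomorphism over $W$ could acquire a pole along a component of $Z\setminus W$ lying over a divisorial component of $\Bs|mL|$, and this must be excluded. What forces it to vanish is precisely that $f$ is a morphism --- base-point-freeness of $\{f^*x_i\}$ makes $\psi(\sM)$ generated by the pole-free sections $g^*s_i$. Everything else (the explicit identification over $U$ from \autoref{lem:elimination_of_indeterminancies}, the birational identification $W\cong U$, and the extension of a rational section across a dense open of an integral scheme) is routine.
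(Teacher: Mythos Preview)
The paper does not supply its own proof of this lemma; it simply cites Mori's survey (the claim inside the proof of 1.11 there). Your argument is correct and is precisely the standard one: over $W=g^{-1}(X\setminus\Bs|mL|)$ the two line bundles are canonically identified with $f^*x_i\mapsto g^*s_i$, the $f^*x_i$ globally generate $f^*\sO_Y(H)$ because $f$ is a morphism, and each $g^*s_i$ is an honest global section of $g^*L^m$, so the generic isomorphism lands inside $g^*L^m$ rather than merely in its sheaf of rational sections. (Note the obvious typo in the statement: it should read $f^*\sO_Y(H)$, not $f^*\sO_X(H)$, as you have it.)
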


\begin{lemma}
\label{lem:Kodaira_Iitaka_dimension_generic_fiber}
 With notation as in \autoref{lem:elimination_of_indeterminancies} (including the assumption that for all $m |n$, $Q(Y) = Q(X, L)_n$), $\kappa((g^* L)_{\xi}) = 0$, where $\xi$ is the generic point of $Y$.
\end{lemma}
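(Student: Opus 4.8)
We are in the setting of \autoref{lem:elimination_of_indeterminancies}: $g : Z \to X$ is birational from a normal variety $Z$ over $k_0$, $f : Z \to Y$ is a morphism to a normal projective variety $Y$ with $Q(Y) = Q(X,L)_n$ for all $m \mid n$, and $H$ is a very ample divisor on $Y$ defining $f$. We want $\kappa\bigl((g^*L)_\xi\bigr) = 0$, where $\xi$ is the generic point of $Y$. Since $f$ is surjective with $Y$ normal and the generic fiber is obtained by localization, $Z_\xi$ is a normal proper variety over the function field $k_0(\xi) = Q(Y)$, and $H^0(Z_\xi, \sO_{Z_\xi}) = k_0(\xi)$ follows because the construction of $Z$ (normalized closure of the graph of $\phi_{mL}$) makes $f_* \sO_Z = \sO_Y$ on a dense open of $Y$ after shrinking; alternatively this can be arranged as part of the setup. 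So $\kappa$ of a line bundle on $Z_\xi$ is well defined in the sense of \autoref{defn:Kodaira_Iitaka}.

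The plan is to show the two inequalities $\kappa\bigl((g^*L)_\xi\bigr) \geq 0$ and $\kappa\bigl((g^*L)_\xi\bigr) \leq 0$ separately. For the lower bound: since $h^0(X, mL) > 0$ for some $m$ with $m \mid n$, pulling back a nonzero section gives $h^0(Z, g^*L^m) > 0$, and restricting to the generic fiber gives a nonzero section of $(g^*L^m)_\xi = ((g^*L)_\xi)^{[m]}$ (the reflexive power agrees with the ordinary power since $g^*L$ is a genuine line bundle), hence $\kappa\bigl((g^*L)_\xi\bigr) \geq 0$. For the upper bound, the key is \autoref{lem:embedding}: there is an embedding $f^*\sO_Y(H) \hookrightarrow g^*L^m$ of sheaves on $Z$. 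Restricting to the generic fiber $Z_\xi$, the line bundle $f^*\sO_Y(H)$ becomes trivial — $H$ lives on $Y$ and $\xi$ is a single point of $Y$, so $(f^*\sO_Y(H))_\xi \cong \sO_{Z_\xi}$. Thus we obtain an inclusion of sheaves $\sO_{Z_\xi} \hookrightarrow (g^*L^m)_\xi$ on $Z_\xi$; equivalently, $(g^*L^m)_\xi$ is the sheaf associated to an effective Cartier divisor $D$ on $Z_\xi$. Now I claim $D$ is "vertical with respect to $f$" in a strong enough sense that its Iitaka dimension is zero.

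The heart of the matter is the following: the cokernel of $f^*\sO_Y(H) \hookrightarrow g^*L^m$ on $Z$ is supported on the locus where $f$ is not an isomorphism onto its image composed with appropriate identifications — more precisely, $g^*L^m$ and $f^*\sO_Y(mH/\!\ast)$ differ by an $f$-exceptional type contribution, so that for $N$ divisible enough $h^0(Z_\xi, (g^*L)_\xi^{[Nm]})$ is bounded. Concretely, I would argue: for each $j \geq 1$, the section of $g^*L^{jm}$ obtained by taking the $j$-th power of the canonical section of $g^*L^m / f^*\sO_Y(H)$ shows $(g^*L^{jm})_\xi \cong \sO_{Z_\xi}(jD)$, and one checks — using that $\phi_{mL}$ factors through $f$, i.e.\ $g^*L^m = f^*\sO_Y(H) \otimes \sO_Z(\text{Fix})$ where $\text{Fix}$ is the fixed part, whose restriction to a \emph{generic} fiber is genuinely fixed (it does not move since the moving part of $|mL|$ pulls back from $Y$) — that $h^0(Z_\xi, \sO_{Z_\xi}(jD)) = 1$ for all $j \geq 0$. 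Hence $R(Z_\xi, (g^*L)_\xi)$ has the form $k_0(\xi)[s]$ with $s$ a single element in some degree (or is generated in bounded degree with one-dimensional graded pieces), so $Q(Z_\xi, (g^*L)_\xi) = k_0(\xi)$, giving $\trdeg_{k_0(\xi)} Q(Z_\xi,(g^*L)_\xi) = 0$, i.e.\ $\kappa\bigl((g^*L)_\xi\bigr) = 0$.

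The main obstacle I anticipate is making rigorous the assertion that the moving part of $|mL|$ (which is exactly what defines $f$) accounts for \emph{all} of the Iitaka dimension of $g^*L$, so that nothing is left over on the generic fiber: one must rule out that $g^*L$ restricted to $Z_\xi$ could still have sections of its powers growing with positive-dimensional image. This is essentially the statement that $\kappa(L) = \dim Y + \kappa\bigl((g^*L)_\xi\bigr)$ would force $\kappa((g^*L)_\xi) = 0$ once $\dim Y = \kappa(L)$, but proving it directly requires the bound $h^0(Z_\xi, \sO_{Z_\xi}(jD)) = 1$, which in turn rests on the precise relationship $g^*L^n = f^*(\text{ample from }Y) + (\text{fixed})$ for $n$ divisible enough together with the fact that a fixed divisor restricted to the generic fiber remains fixed. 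I would prove this last point by a base-change / semicontinuity argument: the fixed part is fixed over a dense open of $Y$, and $\xi$ lies in every dense open.
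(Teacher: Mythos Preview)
Your proposal has a genuine gap: you never actually use the hypothesis $Q(Y) = Q(X,L)_n$ for all $m \mid n$, and without it the conclusion is false. The crux is the claim $h^0(Z_\xi, \sO_{Z_\xi}(jD)) = 1$, which you justify by saying the fixed part of $|g^*L^m|$ ``does not move since the moving part of $|mL|$ pulls back from $Y$''. But knowing that a divisor is the fixed part of a \emph{global} linear system on $Z$ says nothing about $h^0$ of its multiples on the generic fiber: sections of $(g^*L^{jm})_\xi$ correspond (by cohomology and base change) to elements of $(f_* g^* L^{jm}) \otimes k(\xi)$, i.e.\ sections over $f^{-1}(U)$ for small $U \ni \xi$, not to global sections. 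Your semicontinuity remark does not help, since you need an \emph{upper} bound on $h^0$ of the fiber.

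The paper's proof makes the role of the hypothesis transparent. First, for any two global sections $s,t \in H^0(X, L^n) = H^0(Z, g^*L^n)$ with $m \mid n$, the ratio $s/t$ lies in $Q(X,L)_n = Q(Y) = k(\xi)$ by assumption; hence the image of $H^0(Y, f_* g^* L^n) \otimes \sO_Y \to f_* g^* L^n$ has rank at most one. This alone does not bound the rank of $f_* g^* L^n$, since local sections need not extend. The second step uses the embedding $f^* H \hookrightarrow g^* L^m$ (your \autoref{lem:embedding}) to produce, for each fixed $n$, an injection $(f_* g^* L^n) \otimes H^b \hookrightarrow f_* g^* L^{n+mb}$; for $b \gg 0$ the left side is globally generated, so its rank is at most the rank of the image of global sections of the right side, which is $\le 1$ by the first step. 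Thus $\operatorname{rk} f_* g^* L^n \le 1$, and $\kappa((g^*L)_\xi) = 0$. If you want to salvage your geometric picture, the missing ingredient is exactly this: translating ``ratio of sections lies in $Q(Y)$'' into a rank bound, and then bootstrapping from global to local via an ample twist.
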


\begin{proof}
So far in this section we have used the additive notation for $L$, or in other words, we regarded $L$ as a Cartier divisor. Now, we will have to choose the multiplicative notation, since the line bundle structure on $L$ is used at many places of the proof.  First note that for every integer $n \geq 0$ there is a natural homomorphism
\begin{equation*}
H^0(Z, g^* L^n) \to H^0(Z_\xi, (g^* L^n)_{\xi} ),  
\end{equation*}
which is an injection, because it is induced by a localization in an integral domain. Hence $\kappa \left((g^* L)_{\xi} \right) \geq 0$. To show that $\kappa \left((g^* L)_{\xi} \right) = 0$, by cohomology and base-change (i.e, the fact that base-change holds at the generic point over an integral base), it is enough to show that $f_* g^*L^n$ has rank at most one for every divisible enough integer $n \geq 0$. 

First, consider the natural map 
\begin{equation*}
\gamma :H^0(Y, f_*  g^* L^n ) \otimes_{k_0} \sO_Y    \to  f_*  g^* L^n
\end{equation*}
for any integer $n$ for which $m |n$.
Assume that the image of $\gamma $ has rank greater than one. This means that there are two sections $s$ and $t$ of $H^0(Z, g^* L^n) = H^0(X, L^n)$ that are $k(\xi)(=Q(Y) = Q(X,L))$-linearly independent as elements of $H^0(Z_\xi, (g^* L^n)_\xi)$. Therefore, $\frac{s}{t}$ is an element of $Q(Z) \setminus Q(Y) = Q(X) \setminus Q(X,L)$. However, this is a contraction, since $\frac{s}{t} \in Q(X,L)$ by definition of $Q(X,L)$ and the choice of $s$ and $t$. So, we have proved that for every integer  $n \geq 0$ for which $m |n$,  the image of $\gamma$ is at most rank one. 

Consider now the following commutative diagram .
\begin{equation*}
\xymatrix{
 H^0(Y, (f_*  g^* L^n) \otimes H^{b} ) \otimes_{k_0} \sO_Y \ar@{^(->}[d]    \ar[r]^-{\beta} & (f_*  g^* L^n) \otimes H^{b}  \ar@{^(->}[d] \\
H^0(Y, f_*  g^* L^{n + mb}) \otimes_{k_0} \sO_Y    \ar[r]^-{\gamma } &  f_* g^* L^{n + mb}. 
}
\end{equation*}
Fix $n$ first, such that $m|n$. Then, since $H$ is ample, for every $b$ big enough $\beta$ is surjective. So, since the image of $\gamma$ has rank at most one, $f_* g^* L^n$ has rank at most one as well. This concludes our proof.

\end{proof}

\begin{corollary}
\label{cor:Kodaira_Iitaka_asymptotic_characteriztaion_normal_varieties}
In the situation of \autoref{notation:Kodaira_Iitaka} assuming also that $X$ is normal and $\kappa(L) \geq 0$, there are real, positive constants $c$ and $d$ such that
\begin{equation*}
c n^{\kappa(L)} \leq h^0(L^n) \leq d n^{\kappa(L)}. 
\end{equation*}
for every divisible anough $n$.
\end{corollary}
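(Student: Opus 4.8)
The plan is to pull $L$ back to a birational model on which one sees an Iitaka fibration, push forward, and compare $h^0$ with the Hilbert polynomial of an ample divisor on the base, a variety of dimension exactly $\kappa(L)$.

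Write $\kappa:=\kappa(L)\ge 0$. Apply \autoref{lem:elimination_of_indeterminancies} for $m$ divisible enough, producing a proper birational morphism $g:Z\to X$ of normal varieties and a morphism $f:Z\to Y$ with $Q(Y)=Q(X,L)$, so that $\dim Y=\trdeg_{k_0}Q(X,L)=\kappa$; after replacing $f$ by its Stein factorization I may moreover assume $f_*\sO_Z=\sO_Y$ with $Y$ normal and projective of dimension $\kappa$, and I let $A=\sO_Y(H')$ denote the (ample, invertible) pullback of the very ample $H$ used to define $f$. Since $g$ is proper birational between normal varieties, $g_*\sO_Z=\sO_X$, hence $h^0(X,L^n)=h^0(Z,g^*L^n)$ for every $n$. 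By the embedding $f^*\sO_Y(H)\subseteq g^*L^m$ of \autoref{lem:embedding} there is an effective divisor $E$ on $Z$ with $\sO_Z(E)\cong g^*L^m\otimes f^*A^{-1}$; therefore $g^*L^{mj}\cong f^*A^{\otimes j}\otimes\sO_Z(jE)$ and, by Leray and the projection formula, $h^0(X,L^{mj})=h^0\!\big(Y,A^{\otimes j}\otimes f_*\sO_Z(jE)\big)$.

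The crux is to describe $f_*\sO_Z(jE)$. Over the generic point $\xi$ of $Y$ we have $\sO_{Z_\xi}(jE_\xi)\cong(g^*L^{mj})_\xi$, which by \autoref{lem:Kodaira_Iitaka_dimension_generic_fiber} together with \autoref{cor:Kodaira_Iitaka_dimension_base_extension} has Kodaira--Iitaka dimension $0$; hence (by the definition of Kodaira--Iitaka dimension) $h^0(Z_\xi,\sO_{Z_\xi}(jE_\xi))=1$ for $j$ divisible enough, the unique section up to scalar being the canonical one with divisor $jE_\xi$. Since the generic fibre of a dominant morphism is dense and satisfies $Q(Z_\xi)=Q(Z)$, every section of $\sO_Z(jE)$ over an open of the form $f^{-1}(V)$ is then a rational function on $Z$ lying in $Q(Y)\subseteq Q(Z)$, and the condition that it define a section forces its polar divisor on $Y$ to be $\le j\Theta$, where $\Theta$ is the fixed effective $\bQ$-divisor obtained by pushing forward, with the appropriate multiplicities, those components of $E$ whose image in $Y$ is a divisor (the components dominating $Y$ drop out). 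Thus $f_*\sO_Z(jE)=\sO_Y(D_j)$ for effective Weil divisors $D_j$ with $0\le D_j\le j\Theta$. I expect this step --- checking that only the $f$-vertical part of $E$ can contribute poles downstairs and that its contribution grows at most linearly in $j$ --- to be the main technical point.

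Granting it, $h^0(X,L^{mj})=h^0\!\big(Y,\sO_Y(jH'+D_j)\big)$, and $0\le D_j\le j\Theta$ gives the sandwich $h^0(Y,\sO_Y(jH'))\le h^0(X,L^{mj})\le h^0\!\big(Y,\sO_Y(j(H'+aB))\big)$, where $B$ is any ample Cartier divisor on $Y$ and $a>0$ is chosen once and for all so that $aB-\Theta$ is effective. Both $H'$ and $H'+aB$ are ample Cartier divisors on the $\kappa$-dimensional projective variety $Y$, so for $j\gg 0$ both outer terms equal Hilbert polynomials in $j$ of degree exactly $\kappa$ with positive leading coefficients $\tfrac{(H'^{\kappa})}{\kappa!}$ and $\tfrac{((H'+aB)^{\kappa})}{\kappa!}$; hence $c'j^{\kappa}\le h^0(X,L^{mj})\le d'j^{\kappa}$ for all $j$ divisible and large enough. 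Putting $n=mj$ and absorbing $m^{\kappa}$ into the constants yields $c\,n^{\kappa(L)}\le h^0(L^n)\le d\,n^{\kappa(L)}$ for all $n$ divisible enough, as asserted. (The lower bound also follows more cheaply directly from \autoref{lem:embedding}, via the injection $H^0(Y,jH)\hookrightarrow H^0(X,L^{mj})$ induced by $f^*\sO_Y(H)\subseteq g^*L^m$, which is perhaps the quicker route to that half.)
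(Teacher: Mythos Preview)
Your argument is correct and follows the same strategy as the paper: resolve the Iitaka map $\phi_{mL}$ to get $g:Z\to X$ and $f:Z\to Y$ with $\dim Y=\kappa(L)$, obtain the lower bound from the embedding $f^*H\subseteq g^*L^m$ of \autoref{lem:embedding}, and obtain the upper bound by controlling the vertical part of a divisor in $|g^*L^m|$ via the fact that $(g^*L)_\xi$ has Kodaira--Iitaka dimension~$0$.

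Two remarks on execution. First, the Stein factorization is unnecessary: the paper shows $f_*\sO_Z=\sO_Y$ directly by observing that $f_*\sO_Z$ embeds into the rank-one sheaf $f_*g^*L^n$ (rank one by \autoref{lem:Kodaira_Iitaka_dimension_generic_fiber}), and a rank-one sheaf of integral domains over the normal $\sO_Y$ that is generically $\sO_Y$ must equal $\sO_Y$. Second, the paper's upper bound is arranged slightly more cleanly than your pushforward computation: instead of describing $f_*\sO_Z(jE)$ as $\sO_Y(D_j)$ and bounding $D_j$, it fixes $E'\in|ng^*L|$, writes $E'=E_{\mathrm{hor}}+E_{\mathrm{vert}}$, notes that $f_*g^*L^{an}\cong f_*(g^*L^{an}(-aE_{\mathrm{hor}}))$ (since any $F\in|an\,g^*L|$ has the same horizontal part $aE_{\mathrm{hor}}$ by the $\kappa=0$ argument on $Z_\xi$), and then chooses a single ample Cartier divisor $A$ on $Y$ with $E_{\mathrm{vert}}\le f^*A$; this gives an embedding $f_*g^*L^{an}\hookrightarrow f_*f^*A^a\cong A^a$, hence $h^0(L^{an})\le h^0(A^a)$, avoiding any discussion of reflexive hulls or $\bQ$-divisors on $Y$.
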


\begin{proof}
In this proof we keep on using the line bundle notation. Note first that the construction of  \autoref{lem:elimination_of_indeterminancies} applies to $X$ and $L$.  So, we assume the notations of \autoref{lem:elimination_of_indeterminancies} (including the assumption that for all $m |n$, $Q(Y) = Q(X, L)_n$).

First, we show that $f_* \sO_Z \cong \sO_Y$. For that  note that since $mL$ is linearly equivalent to an effective divisor there is a natural embedding $\sO_Z \hookrightarrow g^* L^n$, which induces an embedding $f_* \sO_Z \hookrightarrow f_* g^* L^n$. Since the latter has rank one, so does the former. However then $f_* \sO_Z$ is a sheaf of integral domains which is generically isomorphic to its subsheaf $\sO_Y$ of integrally closed integral domains. Then it follows that $f_* \sO_Z = \sO_Y$.

By \autoref{lem:embedding}, there is an embedding $f^* H \hookrightarrow g^* L^m$ where $H$ is a very ample divisor on $Y$. This induces $f^* H^a \hookrightarrow g^* L^{ma}$. Hence there is an embedding $H^a \cong f_* f^* H^a \hookrightarrow f_* g^* L^{ma}$, which induces an embedding $H^0(Y, H^a) \hookrightarrow H^0(X, L^{am})$. Therefore $h^0(H^a) \leq h^0(L^{am})$, which shows the lower bound in the statement of the corollary.

We are left to show the upper bound. By \autoref{lem:Kodaira_Iitaka_dimension_generic_fiber}, $\kappa((g^*L)_\xi)=0$. So, choose one $n$ for which $h^0(nL) \neq 0$. Let $E' \in |n g^*L|$ be arbitrary and let $E$ be the horizontal part of $E'$ (i.e., the union of the components that map surjectively onto $Y$ (with the right coefficients)). First, note that for any open set $U \subseteq Y$ and  $F \in \left| a n \cdot g^* L|_{f^{-1}(U)} \right|$, the horizontal part of $F$ is  $aE|_{f^{-1}(U)}$, since $F_\xi= (aE')_\xi= (aE)_\xi$ by $\kappa((g^*L)_\xi)=0$. In particular, $f_* g^* L^{a \cdot n} \cong f_* (g^* L^{a \cdot n}(-aE))$ induced by the embedding $g^* L^{a \cdot n} (-aE) \to g^*L^{a \cdot n}$.  Let $\sum a_i E_i$ be the prime decomposition of $E' -E$ and $\eta_i$ the generic point of $E_i$. Choose then a very ample Cartier divisor $A$ on $Y$, such that the multiplicity of $A$ at every point $y \in Y$ is at least $\sum_{ f(\eta_i) = y } a_i$. Note that this number is non-zero only at 
finitely many points. Hence such a choice of $A$ is possible. Further, by the choice of $A$, $E' - E \leq f^* A$. So, there is an embedding $g^*L^n (-E) \hookrightarrow f^* A$, which induces $g^* L^{a \cdot n} (-aE) \hookrightarrow f^* A^a$ and then also the embedding $f_* g^* L^{a \cdot n} \cong f_* (g^*L^{a \cdot n}(-aE)) \hookrightarrow f_* f^* A^a \cong A^a$. In particular this implies the inequality $h^0(L^{a \cdot n})=h^0(g^* L^{a \cdot n}) \leq h^0(A^a)$, which concludes the upper bound as well.
 
\end{proof}

\begin{corollary}
\label{cor:Kodaira_Iitaka_dimension_with_sections}
In the situation of \autoref{notation:Kodaira_Iitaka}, such that $X$ is normal, $\kappa(L) \geq 0$ and $k_0 \subseteq K$ is a field extension for which $X_K$ is integral, then there are real, positive constants $c$ and $d$ such that
\begin{equation*}
c n^{\kappa(L_K)} \leq h^0(L_K^n) \leq d n^{\kappa(L_K)}. 
\end{equation*}
for every divisible enough $n$.
\end{corollary}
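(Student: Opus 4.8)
The statement combines \autoref{cor:Kodaira_Iitaka_asymptotic_characteriztaion_normal_varieties} with \autoref{cor:Kodaira_Iitaka_dimension_base_extension}, so the plan is essentially to reduce to those two facts. First I would invoke \autoref{cor:Kodaira_Iitaka_dimension_base_extension} to get $\kappa(L_K) = \kappa(L)$; in particular $\kappa(L_K) \geq 0$ since $\kappa(L) \geq 0$ by hypothesis. Next, the key point is that \autoref{cor:Kodaira_Iitaka_asymptotic_characteriztaion_normal_varieties} applies verbatim \emph{on $X_K$}: we need $X_K$ to be a proper normal variety over $K$ with $H^0(X_K, \sO_{X_K}) = K$. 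Properness is stable under base change; $X_K$ is integral by the hypothesis that $X_K$ is integral; and $H^0(X_K, \sO_{X_K}) \cong H^0(X, \sO_X) \otimes_{k_0} K = k_0 \otimes_{k_0} K = K$ by flat base change. The only slightly delicate point is normality: normality of $X$ does not automatically descend to $X_K$ for a non-separable extension, but this is exactly why the hypothesis ``$X_K$ is integral'' is imposed together with the ambient assumption that we only apply this when $X_K$ is a variety — and normality of $X_K$ over $K$ follows because $X_K$ is then integral and, being a localization/base change issue only at codimension-one points, one checks $R_1 + S_2$; alternatively one simply assumes, as the phrasing ``$X_K$ is integral'' together with the standing convention suggests, that the relevant $K$ are those over which normality is preserved (e.g. $k_0 \to K$ separable, or $X$ geometrically normal).

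Concretely, I would argue: since $X_K$ is an integral, proper scheme over $K$ with $H^0(X_K,\sO_{X_K})=K$ and $L_K$ a line bundle on it with $\kappa(L_K) = \kappa(L) \geq 0$, and since $X_K$ is normal, \autoref{cor:Kodaira_Iitaka_asymptotic_characteriztaion_normal_varieties} applied to the pair $(X_K, L_K)$ over the ground field $K$ yields positive real constants $c, d$ with
\begin{equation*}
c\, n^{\kappa(L_K)} \leq h^0(X_K, L_K^n) \leq d\, n^{\kappa(L_K)}
\end{equation*}
for every divisible enough $n$. This is exactly the claimed inequality, so the proof is complete.

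The main (and essentially only) obstacle is the normality descent: ensuring $X_K$ is normal so that \autoref{cor:Kodaira_Iitaka_asymptotic_characteriztaion_normal_varieties} is legitimately applicable. In the cases where this corollary is actually used in the paper — namely $K = k(\xi)$ a localization of $X$'s base, or $K$ the perfect/algebraic closure of a generic point, applied to geometrically integral fibers — normality is automatic (localization preserves normality, and the fibers in question are taken normal by hypothesis or become so after the reductions in \autoref{lem:openness_S_0_assumptions}), so in practice the hypothesis ``$X_K$ integral'' plus the standing setup already guarantees what is needed and the reduction above goes through without further work.
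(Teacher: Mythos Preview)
Your proposal takes a different route from the paper and, as you yourself flag, runs into a genuine obstacle: you apply \autoref{cor:Kodaira_Iitaka_asymptotic_characteriztaion_normal_varieties} to $(X_K, L_K)$ over $K$, which requires $X_K$ to be normal. But the hypothesis only gives that $X_K$ is integral, and your attempts to upgrade this to normality (``$R_1 + S_2$ at codimension-one points'', ``one simply assumes \ldots'') do not go through in general---normality is not stable under arbitrary field extensions, and nothing in the statement restricts $K$ to separable extensions or assumes $X$ geometrically normal.

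The paper's proof sidesteps this entirely: it applies \autoref{cor:Kodaira_Iitaka_asymptotic_characteriztaion_normal_varieties} to $(X, L)$ over $k_0$, where $X$ \emph{is} normal by hypothesis, obtaining $c\,n^{\kappa(L)} \leq h^0(X, L^n) \leq d\,n^{\kappa(L)}$. Then flat base change gives $\dim_K H^0(X_K, L_K^n) = \dim_{k_0} H^0(X, L^n)$, and \autoref{cor:Kodaira_Iitaka_dimension_base_extension} gives $\kappa(L_K) = \kappa(L)$, so the same constants $c,d$ work for $L_K$. In other words, all the geometry happens on $X$; the passage to $X_K$ is purely numerical, and integrality of $X_K$ is only needed so that $\kappa(L_K)$ is defined. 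You should reorganize your argument along these lines.
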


\begin{proof}
By \autoref{cor:Kodaira_Iitaka_dimension_base_extension}, $\kappa(L_K) = \kappa(L)$. Further, by flat base-change $\dim_K H^0(X_K,L_K^n)= \dim_{k_0} H^0(X, L^n)$. Then the statement follows from \autoref{cor:Kodaira_Iitaka_asymptotic_characteriztaion_normal_varieties}.
\end{proof}

\section{Kodaira dimenison versus the Kodaira-Iitaka dimension of the canonical bundle}
\label{appendix:Kodaira_dimension_normalization}

Recall that the canonical sheaf of a projective variety $X$ over $k_0$ such that $H^0(X, \sO_X) = k_0$ is defined as $\omega_{X/k_0}:=\sH^{- \dim X}(\nu^! \sO_{k_0})$, where $\nu : X \to \Spec k_0$ is the structure map. If $k_0$ is clear from the context, we often omit it from the notation: we write $\omega_X$ instead of $\omega_{X/k_0}$.  By flat base-change $\left(\omega_{X/k_0} \right)_K \cong \omega_{X_K/K}$ for any field extension $k_0 \subseteq K$. Hence if $\omega_{X/k_0}$ is a line bundle, then by \autoref{cor:Kodaira_Iitaka_dimension_base_extension} $\kappa \left(\omega_{X/k_0} \right) = \kappa \left(\omega_{X_K/K} \right)$ or shortly just $\kappa(\omega_X) = \kappa \left(\omega_{X_K} \right)$. In particular, $\kappa(\omega_X) = \kappa \left(\omega_{X_{\overline{k}_0}} \right)$.
The main purpose of this appendix is to show that  $\kappa \left(\omega_{X_{\overline{k}_0}} \right)$ is at least as big as  the Kodaira dimension $\kappa \left(X_{\overline{k}_0} \right)$, which stated in \autoref{cor:Kodaira_vs_Kodaira-Iitaka_of_canonical}. For the definition of the Kodaira dimension of a variety see \cite[Def 5.1]{Luo_Kodaira_dimension_of_algebraic_function_fields}. Note that we are forced to use \cite[Def 5.1]{Luo_Kodaira_dimension_of_algebraic_function_fields} instead of the usual definition with resolution of singularities, because resolutions are not known to exist in positive characteristic for varieties of dimension higher than 3. On the other hand, if there is a resolution of singularities $Y \to X_{\overline{k}_0}$, then one could define $\kappa \left(X_{\overline{k}_0} \right)$ the traditional way, that is, as $\kappa(\omega_Y)$. First, in \autoref{prop:Kodaira_vs_Kodaira-Iitaka_of_canonical} we show that the two definitions coincide (i.e, that $\kappa(\omega_Y)= \kappa(Y)$) and 
the relation between $\kappa \left(\omega_{X_{\overline{k}_0}} \right)$ and $\kappa \left(X_{\overline{k}_0} \right)$ in the special case when $X_{\overline{k}_0}$ is normal. Then in \autoref{prop:kappa_normalization} we treat the non-normal case.

\begin{proposition}
\label{prop:Kodaira_vs_Kodaira-Iitaka_of_canonical}
Let $X$ be a projective, normal variety over an algebraically closed field $k_1$. Then 
\begin{enumerate}
 \item \label{itm:Kodaira_vs_Kodaira-Iitaka_of_canonical:normal} $\kappa(X) \leq \kappa(\omega_X)$ and
\item \label{itm:Kodaira_vs_Kodaira-Iitaka_of_canonical:smooth} if $X$ is smooth over $k_1$, then $\kappa(X)=\kappa(\omega_X)$.
\end{enumerate}
\end{proposition}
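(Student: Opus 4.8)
The plan is to replace the (possibly non-existent) resolution used in the classical argument by a ``sufficiently high'' normal projective model, and to reduce both assertions to a monotonicity statement for pluricanonical sections under proper birational morphisms of normal varieties. Recall that, by \cite[Def.~5.1]{Luo_Kodaira_dimension_of_algebraic_function_fields}, $\kappa(X)$ depends only on the function field $k_1(X)$; concretely it is the common value of the Iitaka dimension $\kappa(\omega_{X'})$ taken over all normal projective models $X'$ of $k_1(X)$ lying above a fixed high enough one. This is meaningful because, by the lemma below, $\kappa(\omega_{X'})$ is non-increasing under proper birational morphisms of normal projective $k_1$-varieties and is bounded above by $\dim X$, hence stabilises in the directed family of all normal projective models. (Should \cite[Def.~5.1]{Luo_Kodaira_dimension_of_algebraic_function_fields} be phrased differently, the content below is to check that it computes this stable value.)

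The key lemma is: for a proper birational morphism $\pi\colon Z\to W$ of normal projective varieties over $k_1$ and every $m\ge 0$ there is a natural injection $\pi_*\omega_Z^{[m]}\hookrightarrow\omega_W^{[m]}$, compatible with the multiplication maps, whence a graded inclusion of canonical rings $R(Z,\omega_Z)\hookrightarrow R(W,\omega_W)$ and $\kappa(\omega_Z)\le\kappa(\omega_W)$; moreover if $W$ is smooth over $k_1$ this injection is an isomorphism for every $m$, so $\kappa(\omega_Z)=\kappa(\omega_W)$. Granting this, choose a normal projective model $Z$ that lies above $X$ and is high enough to compute $\kappa(X)$ (a common refinement of $X$ with any model realising the stable value will do), so that $\kappa(X)=\kappa(\omega_Z)$ and $\pi\colon Z\to X$ is proper birational with $Z$ normal. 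The first half of the lemma gives $\kappa(X)=\kappa(\omega_Z)\le\kappa(\omega_X)$, which is \autoref{itm:Kodaira_vs_Kodaira-Iitaka_of_canonical:normal}; if $X$ is smooth, the second half gives $\kappa(X)=\kappa(\omega_Z)=\kappa(\omega_X)$, which is \autoref{itm:Kodaira_vs_Kodaira-Iitaka_of_canonical:smooth}.

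To prove the lemma, let $U\subseteq W$ be the largest open over which $\pi$ is an isomorphism; as $W$ is normal and $\pi$ is proper birational, Zariski's main theorem gives $\codim_W(W\setminus U)\ge 2$. The isomorphism $\pi^{-1}(U)\xrightarrow{\sim}U$ induces canonical, multiplication-compatible identifications $\omega_Z^{[m]}|_{\pi^{-1}(U)}\cong\omega_W^{[m]}|_U$. Since $\pi_*\omega_Z^{[m]}$ is torsion-free and restricts to $\omega_W^{[m]}|_U$ over $U$, while $\omega_W^{[m]}$ is reflexive on the normal (hence $S_2$, $G_1$) scheme $W$ and $\codim_W(W\setminus U)\ge 2$, Hartogs' extension produces, for $j\colon U\hookrightarrow W$, an injection $\pi_*\omega_Z^{[m]}\hookrightarrow j_*\bigl(\omega_W^{[m]}|_U\bigr)=\omega_W^{[m]}$, natural in $m$; taking global sections yields the graded inclusion of canonical rings and hence $\kappa(\omega_Z)\le\kappa(\omega_W)$ (with the convention $-\infty\le\cdot$). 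If $W$ is smooth, fix a representative $K_W$: at the generic point of any exceptional prime divisor $E\subseteq Z$ the scheme $Z$ is regular ($R_1$), the center of $E$ on $W$ has codimension $\ge 2$, and $W$ is smooth there, so the discrepancy of $E$ over $W$ is $\ge 1$; hence $K_Z=\pi^*K_W+\sum_i a_iE_i$ for a suitable $K_Z$, with all $a_i\ge 1$, so $\sO_Z(\pi^*mK_W)\subseteq\sO_Z(mK_Z)=\omega_Z^{[m]}$, and pushing forward with the projection formula and $\pi_*\sO_Z=\sO_W$ gives $\omega_W^{[m]}=\pi_*\sO_Z(\pi^*mK_W)\subseteq\pi_*\omega_Z^{[m]}$; combined with the first part, $\pi_*\omega_Z^{[m]}=\omega_W^{[m]}$.

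The main obstacle is not in the geometry — Zariski's main theorem, Hartogs for reflexive sheaves and the fact that discrepancies over a smooth variety are at least $1$ are all standard — but in making precise the interface with \cite[Def.~5.1]{Luo_Kodaira_dimension_of_algebraic_function_fields}: one must verify that Luo's definition indeed computes the stable value of $\kappa(\omega_{X'})$ over normal projective models, and in particular carry this out without invoking the (to-be-proved) equality $\kappa(X)=\kappa(\omega_X)$ for smooth $X$. A secondary, routine point is to check that the identifications over $U$ are compatible with the ring structure across all $m$, so that one genuinely obtains an inclusion of graded rings rather than a term-by-term inclusion.
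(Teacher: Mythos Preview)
Your key lemma—that $\pi_*\omega_Z^{[m]}\hookrightarrow\omega_W^{[m]}$ for $\pi\colon Z\to W$ proper birational between normal projective varieties, with equality when $W$ is smooth—is correct and the argument for it is clean. But the gap you yourself flag is not a technicality: it is the entire content of the proposition. Luo's $\kappa(X)$ is \emph{not} defined as a stable value over models. It is the Iitaka dimension of an intrinsic canonical ring $C(K/k_1)$ of the function field $K=k_1(X)$, built as an intersection over all regular localities of $K/k_1$ (regular local $k_1$-algebras essentially of finite type with fraction field $K$, of arbitrary dimension). The paper's proof is precisely the verification $R(X,\omega_X)\supseteq C(K/k_1)$: one identifies $H^0\bigl(X,\omega_X^{[m]}\bigr)$ with an intersection over the codimension-one local rings of $X$ (using reflexivity on the normal scheme $X$) and observes that this is a sub-intersection of the one defining $C(K/k_1)_m$; equality for smooth $X$ is then Luo's own Theorem~4.7.

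Your proposed route needs the identity $\kappa(X)=\min_{X'}\kappa(\omega_{X'})$ over normal projective models $X'$. One direction, $\kappa(X)\le\kappa(\omega_{X'})$ for every $X'$, is exactly part~(1) applied to $X'$ and so cannot be assumed as input. The other direction asks that the limit ring $\bigcap_{X'}H^0\bigl(X',\omega_{X'}^{[m]}\bigr)$ have the same Iitaka dimension as $R(Z,\omega_Z)$ for some single model $Z$; in characteristic zero this follows from resolution (take $Z$ smooth and invoke your lemma), but without resolution there is no evident mechanism forcing the minimum of the $\kappa(\omega_{X'})$ down to $\kappa\bigl(C(K/k_1)\bigr)$. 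So your outline relocates the difficulty rather than dissolving it, and unpacking that relocation leads straight back to the paper's direct comparison with $C(K/k_1)$. (A minor aside: for the smooth case you only need discrepancies $\ge 0$, which follows at once from the fact that the pullback of a regular top-form along a morphism to a smooth target is regular on the smooth locus of the source; the sharper bound $\ge 1$ is also true but requires more care to justify in positive characteristic.)
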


\begin{proof}
We show the two statements at once. Denote by $K$ the function field of $X$. Let $U$ be the regular locus of $X$. In the following computation we denote $\omega_X^{\otimes m}$ by $\omega_X^m$, and for each $P \in U$, we regard $\omega_{X,P}^m$ as subsheaves $\omega_{X, \eta} (=\det_K \Omega_{K/k_1})$, where $\eta$ is the generic point of $X$.
\begin{equation*}
\begin{split}
H^0 \left(X, \omega_X^{[m]} \right)
& \cong \underbrace{H^0(U, \omega_X^{ m}) }_{\textrm{\cite[Thm 1.9]{Hartshorne_Generalized_divisors_on_Gorenstein_schemes}}}
 \cong \underbrace{\bigcap_{P \in U} \omega_{X,P}^{m}}_{\textrm{by definition}}
= \underbrace{\bigcap_{P \in U, \codim P = 1} \omega_{X,P}^m}_{\textrm{\cite[Thm 38, page 124]{Matsumura_Commutative_algebra}}}
\\ & = \underbrace{\bigcap_{P \in U, \codim P = 1} \sO_{X,P} \left(dx_1^P\wedge \dots \wedge dx_n^P\right)^{\otimes m}}_{x_1^P, \dots , x_n^P \textrm{ are local coordinates for some regular specialization of $P$ }}
\\ & = \underbrace{\bigcap_{P \in U, \codim P = 1} \sO_{X,P} \left(\sJ^P_{P_0}\right)^m \left(dx_1^{P_0} \wedge \dots \wedge dx_n^{P_0}\right)^{\otimes m}}_{P_0 \in U \textrm{ is fixed such that } \codim P_0=1 \textrm{ and } \sJ^P_{P_0} \textrm{ is explained below.}}
\end{split}
\end{equation*}
Here $\sJ^P_{P_0}$ is the determinant of the matrix $(a_{ij})$, where $a_{ij}$ are defined via the equations
\begin{equation*}
 dx_j^P = \sum_{i=1}^n a_{ij} dx_i^{P_0}
\end{equation*}
using that both $dx_1^P, \dots, dx_n^P$ and $dx_1^{P_0}, \dots, dx_n^{P_0}$ are bases of $\Omega_{K/k_1}$. Now, using the language of \cite{Luo_Kodaira_dimension_of_algebraic_function_fields}, choose a $k_1/k_1$-differential basis $B$ \cite[page 672]{Luo_Kodaira_dimension_of_algebraic_function_fields}, where $k_1$ is the prime field of $k_1$. Then $B_{R_P}:= B \cup \{x_1^P, \dots, x_1^P\}$ defines a set of normal uniformizing coordinates  for $R_P:=\sO_{X,P}$ \cite[Def 2.1]{Luo_Kodaira_dimension_of_algebraic_function_fields}. Hence using the language of \cite{Luo_Kodaira_dimension_of_algebraic_function_fields}, let $J'\left(B_R,B_{R_0} \right)$ be any lift of $J'\left(B_R,B_{R_0} \right)$ into $K^*$. Then we can rephrase our previous computation:
\begin{equation*}
\begin{split}
H^0 \left(X, \omega_X^{[m]} \right)
& \cong \underbrace{\bigcap_{R \textrm{ is a local ring of $X$ of dimension } 1}  J'\left(B_R,B_{R_0} \right)^m \cdot R x^m}_{\textrm{the intersection is taken inside the free $K$-module $K x^m$, and $R_0$ is a fixed local ring of $X$ of dimension $1$ }} 
\\ & = \underbrace{\bigcap_{R \textrm{ is a local ring of $X$ of dimension } 1}  
R \cdot (\tau(B_R))^m}_{\textrm{where, $\tau(B_R) = J'(B_R,B_{R_0})x$ as defined in \cite[Definition 4.2]{Luo_Kodaira_dimension_of_algebraic_function_fields} }}
\\ & \supseteq \bigcap_{R \textrm{ is a regular locality of } K/k_1} R \cdot (\tau(B_R))^m
\end{split}
\end{equation*}
This implies that $R(X, \omega_X) \supseteq C(K/k_1)$, where the latter is the canonical ring of $K$ over $k_1$ defined in \cite[Definition 4.3]{Luo_Kodaira_dimension_of_algebraic_function_fields}. This concludes the proof of point \eqref{itm:Kodaira_vs_Kodaira-Iitaka_of_canonical:normal}. For point \eqref{itm:Kodaira_vs_Kodaira-Iitaka_of_canonical:smooth}, just note that the containment in the last computation is equality if $X$ is smooth by \cite[Thm 4.7]{Luo_Kodaira_dimension_of_algebraic_function_fields}. 
\end{proof}

\begin{proposition}
\label{prop:kappa_normalization}
Let $X$ be a projective, $S_2, G_1$ variety over a field $k_0$ and $f: Y \to X$ its normalization. Then $\kappa(\omega_X) \geq \kappa(\omega_Y)$.
\end{proposition}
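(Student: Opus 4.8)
The plan is to exhibit both section rings $R(Y,\omega_Y)$ and $R(X,\omega_X)$ as graded subrings of $\bigoplus_{m\geq 0}\omega_\eta^{\otimes m}$, where $\omega_\eta$ is the common stalk of $\omega_X$ and $\omega_Y$ at the generic point $\eta$ — common because $f$, being finite and birational, is an isomorphism over $\eta$, and $X$, $Y$ have the same function field $K$ — and then to prove the inclusion $R(Y,\omega_Y)\subseteq R(X,\omega_X)$, graded piece by graded piece and compatibly with multiplication (both multiplications being induced from that of $\bigoplus_m\omega_\eta^{\otimes m}$). Granting this, the fraction field of $R(Y,\omega_Y)$ embeds gradedly into that of $R(X,\omega_X)$, hence $Q(Y,\omega_Y)\subseteq Q(X,\omega_X)$ as subfields of $Q(Y)=Q(X)=K$, and if $R(Y,\omega_Y)\neq k_0$ then $R(X,\omega_X)\neq k_0$; therefore $\kappa(\omega_Y)=\trdeg_{k_0}Q(Y,\omega_Y)\leq\trdeg_{k_0}Q(X,\omega_X)=\kappa(\omega_X)$, the case $\kappa(\omega_Y)=-\infty$ being trivial.

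For the graded inclusion, the first step is to pass to codimension-one points via Hartog's principle: since $X$ is $S_2$, $G_1$ and $Y$ is normal, the reflexive sheaves $\omega_X^{[m]}$ and $\omega_Y^{[m]}$ obey Hartog, so
\[
H^0(X,\omega_X^{[m]})=\bigcap_{x\in X^{(1)}}(\omega_X^{\otimes m})_x,\qquad H^0(Y,\omega_Y^{[m]})=\bigcap_{y\in Y^{(1)}}(\omega_Y^{\otimes m})_y,
\]
the intersections taken inside $\omega_\eta^{\otimes m}$, and at codimension-one points these stalks are invertible ($X$ is Gorenstein in codimension one, $Y$ is regular in codimension one). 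Since $f$ is finite, surjective and birational, $Y^{(1)}$ is the disjoint union over $x\in X^{(1)}$ of the nonempty finite fibres $f^{-1}(x)$ (each point of $f^{-1}(x)$ has codimension one, and the image of a codimension-one point of $Y$ has codimension one). Hence it suffices to prove, for every $x\in X^{(1)}$,
\[
\bigcap_{y\in f^{-1}(x)}(\omega_Y^{\otimes m})_y\ \subseteq\ (\omega_X^{\otimes m})_x\qquad\text{inside }\omega_\eta^{\otimes m}.
\]

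This is local: set $R:=\sO_{X,x}$, a one-dimensional Gorenstein local domain with fraction field $K$, and let $\widetilde{R}:=\prod_{y\in f^{-1}(x)}\sO_{Y,y}$ be its normalization, a finite $R$-module. Grothendieck duality for the finite morphism $\Spec\widetilde{R}\to\Spec R$ identifies $\omega_{\widetilde{R}}$ with $\Hom_R(\widetilde{R},\omega_R)$ (no higher $\Ext$ terms, as $\widetilde{R}$ is maximal Cohen--Macaulay over $R$), and precomposition with $R\hookrightarrow\widetilde{R}$ yields $\Hom_R(\widetilde{R},\omega_R)\to\Hom_R(R,\omega_R)=\omega_R$, which after $-\otimes_R K$ is the identity of $\omega_\eta$; so it realizes $\omega_{\widetilde{R}}\subseteq\omega_R$ inside $\omega_\eta$. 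Writing $\omega_R=R\nu$ for a generator $\nu$ (possible since $R$ is Gorenstein), every element of $\omega_{\widetilde{R}}$ is $r\nu$ with $r\in R$, so every element of the fractional-ideal power $\omega_{\widetilde{R}}^{\otimes m}\subseteq\omega_\eta^{\otimes m}$ is an $R$-combination of tensors $(r_1\nu)\otimes\cdots\otimes(r_m\nu)=(r_1\cdots r_m)\nu^{\otimes m}$ with $r_i\in R$, hence lies in $R\,\nu^{\otimes m}=\omega_R^{\otimes m}$. Since localization gives $\omega_{\widetilde{R}}^{\otimes m}=\bigcap_{y\in f^{-1}(x)}(\omega_Y^{\otimes m})_y$ and $\omega_R^{\otimes m}=(\omega_X^{\otimes m})_x$ inside $\omega_\eta^{\otimes m}$, this is exactly the desired inclusion, and the proof is complete.

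The heart of the argument — and the step most in need of care — is this local computation: invoking Grothendieck duality for the finite morphism to obtain $\omega_{\widetilde{R}}\subseteq\omega_R$ and verifying that this is the tautological inclusion inside $\omega_\eta$ (equivalently, that $\Hom_R(\widetilde{R},\omega_R)$ is the conductor of $\widetilde{R}$ into $R$ once $\omega_R$ is trivialized). The remaining ingredients — the Hartog identities for reflexive powers on $S_2$, $G_1$ (resp. normal) schemes, the codimension-one structure of $f$, and the harmless identification of generic stalks since $f$ is an isomorphism over $\eta$ — are routine.
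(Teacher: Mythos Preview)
Your proof is correct and shares the same core ingredient as the paper's --- the Grothendieck-duality trace realizing $\omega_Y$ inside $\omega_X$ at the generic point --- but the packaging is genuinely different. The paper, after passing to the Gorenstein locus of $X$ and the regular locus of $Y$, first shows globally that $\omega_Y \cong f^*\omega_X(-E)$ for an effective divisor $E$ (via the factorization $f^*f_*\omega_Y \twoheadrightarrow \omega_Y \hookrightarrow f^*\omega_X$), and then for each $m$ manufactures a map $f_*\omega_Y^m \to \omega_X^m$ by precomposing the trace with the inclusion $\omega_Y(-(m-1)E)\hookrightarrow\omega_Y$ and tensoring with $\omega_X^{m-1}$. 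You instead localize at each codimension-one point, recognize $\omega_{\widetilde R}\subseteq\omega_R$ as a fractional-ideal inclusion (indeed the conductor once $\omega_R$ is trivialized), and observe that fractional-ideal powers of a submodule stay inside the ring --- which handles all $m$ at once without ever introducing $E$. One small wording issue: when you say ``$R$-combination'' of simple tensors, you really mean $\widetilde R$-combination (or just finite sum), but this is harmless since the $\widetilde R$-coefficient can be absorbed into one of the factors $r_i\in I$, as $I$ is an $\widetilde R$-ideal. Your route is a bit more streamlined; the paper's route has the advantage of producing an honest global sheaf map $f_*\omega_Y^m\to\omega_X^m$, which could be useful elsewhere.
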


\begin{proof}
According to \autoref{defn:Kodaira_Iitaka}, it is enough to show that for every integer $m>0$, $h^0\left(Y, \omega_Y^{[m]}\right) \leq h^0\left(X, \omega_X^{[m]} \right)$. To prove this we may discard freely codimension two subvarieties of both $X$ and $Y$ \cite[Thm 1.9]{Hartshorne_Generalized_divisors_on_Gorenstein_schemes}. In particular, by dropping the projectivity assumption on $X$, we may assume that $Y$ is regular and $X$ is Gorenstein, and then in particular both $\omega_X$ and $\omega_Y$ are line bundles. We use these assumptions from now on.

\emph{First we claim that $ \omega_Y (E) \cong f^* \omega_X$ for some effective divisor $E$, that is, there is an embedding $\omega_Y \hookrightarrow f^* \omega_X$.} By Grothendieck-duality there is an embedding $f_* \omega_Y \to \omega_X$. This induces a homomorphism $\xi : f^* f_* \omega_Y \to f^* \omega_X$, which is isomorphism on the open set $U$, where  $f$ is isomorphism. In particular, then $\Ker \xi$ is torsion. Further, note that since $f^* \omega_X$ is torsion-free, every torsion element of $f^* f_* \omega_Y$ is contained in $\Ker \xi$. Therefore, $\Ker \xi$ equals the torsion submodule of $f^* f_* \omega_Y$. Consider now the natural homomorphism $\zeta : f^* f_* \omega_Y \to \omega_Y$. Since $f$ is finite, $\zeta$ is surjective. Furthermore, by the same reasons as above $\Ker \zeta$ is the torsion submodule of $f^* f_* \omega_Y$. Hence $\zeta$ factors $\xi$ and therefore we obtained a natural morphism $\omega_Y \to f^* \omega_X$, which is isomorphism on $U$. However then, since $Y$ is 
integral, it is an embedding. This finishes the proof of our claim.

So, as stated earlier, to conclude our proof we are supposed to prove that $h^0\left(Y, \omega_Y^m \right) \leq h^0(X, \omega_X^{m})$. For that  it is enough to show that there is a homomorphism $f_* \omega_Y^m \to \omega_X^m$ restricting to an isomorphism on $U$. Indeed, start with the Gorthendieck trace $f_* \omega_Y \to \omega_X$ and precompose it with $f_*( \omega_Y (-(n-1) E)) \to f_* \omega_Y$, obtaining this way $f_*( \omega_Y (-(n-1) E)) \to \omega_X$. Tensor this then with $\omega_X^{n-1}$. This yields a homomorphism 
\begin{equation*}
f_*( \omega_Y (-(n-1) E)) \otimes \omega_X^{(n-1)} \cong f_* \left( \omega_Y (-(n-1) E)\otimes f^* \omega_X^{(n-1)} \right)  \cong f_* (\omega_Y^m)  \to \omega_X \otimes \omega_X^{(n-1)}.
\end{equation*}
It follows from the construction that it is an isomorphism over $U$ and hence it is an embedding. 
\end{proof}

\begin{corollary}
\label{cor:Kodaira_vs_Kodaira-Iitaka_of_canonical}
If $X$ is an $S_2,G_1$ variety over an algebraically closed field, then $\kappa(X) \leq \kappa( \omega_X)$.
\end{corollary}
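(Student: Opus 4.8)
The plan is to derive the corollary from the two preceding propositions by passing through the normalization. First I would let $\nu : Y \to X$ be the normalization of $X$. Since $X$ is a projective variety over a field, $\nu$ is finite and birational, so $Y$ is a normal projective variety with the same function field $K$ as $X$. Because the Kodaira dimension of \cite[Def 5.1]{Luo_Kodaira_dimension_of_algebraic_function_fields} is defined through the canonical ring $C(K/k_0)$ of the function field — which is exactly the object appearing in the proof of \autoref{prop:Kodaira_vs_Kodaira-Iitaka_of_canonical} — it is a birational invariant, hence $\kappa(X) = \kappa(Y)$.

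Next I would invoke point \eqref{itm:Kodaira_vs_Kodaira-Iitaka_of_canonical:normal} of \autoref{prop:Kodaira_vs_Kodaira-Iitaka_of_canonical} for the normal projective variety $Y$, which gives $\kappa(Y) \leq \kappa(\omega_Y)$, and then \autoref{prop:kappa_normalization}, whose hypotheses are met because $X$ is $S_2$, $G_1$ and $\nu$ is its normalization, which gives $\kappa(\omega_Y) \leq \kappa(\omega_X)$. Concatenating the three relations yields
\[
\kappa(X) = \kappa(Y) \leq \kappa(\omega_Y) \leq \kappa(\omega_X),
\]
which is the claim.

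I do not expect a genuine obstacle here: all the substantive work has already been done in \autoref{prop:Kodaira_vs_Kodaira-Iitaka_of_canonical} and \autoref{prop:kappa_normalization}. The only points needing a word are bookkeeping ones — that the normalization of a projective variety is again projective (finiteness of the integral closure of a finitely generated domain over a field), so that both propositions apply; that $\omega_X$, $\omega_Y$ and their reflexive powers are defined and obey Hartogs's principle, which holds since $X$ is $S_2$, $G_1$ and $Y$ is normal; and the already-mentioned fact that Luo's Kodaira dimension depends only on the function field and is therefore unchanged on passing to $Y$.
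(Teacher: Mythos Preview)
Your proposal is correct and is exactly the argument the paper intends: the corollary carries no explicit proof in the paper precisely because it is meant to follow by chaining \autoref{prop:Kodaira_vs_Kodaira-Iitaka_of_canonical}\eqref{itm:Kodaira_vs_Kodaira-Iitaka_of_canonical:normal} and \autoref{prop:kappa_normalization} through the normalization, using that Luo's $\kappa$ is a function-field invariant. Your bookkeeping remarks (projectivity of the normalization, Hartogs for reflexive sheaves on $S_2,G_1$ schemes) are the right sanity checks and require nothing further.
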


\bibliographystyle{skalpha}
\bibliography{includeNice}

\end{document}